\documentclass[10pt]{amsart}
\makeatletter
\let\uppercasenonmath\@gobble% disables title uppercase
\makeatother
\usepackage{amssymb}
\usepackage[mathscr]{eucal}
\usepackage{graphicx}

\newcommand{\C}{{\mathbb{C}}}

\newcommand{\R}{{\mathbb{R}}}
\newcommand{\Z}{{\mathbb{Z}}}

\newcommand{\cS}{{\mathcal{S}}}

\newcommand{\cC}{{\mathcal{C}}}
\newcommand{\cD}{{\mathcal{D}}}

\newcommand{\Hh}{{\mathcal{H}}}
\newcommand{\cG}{{\mathcal{G}}}
\newcommand{\cK}{{\mathcal{K}}}
\newcommand{\cF}{{\mathcal{F}}}

\newcommand{\cI}{{\mathcal{I}}}

\newcommand{\cE}{{\mathcal{E}}}

\newtheorem{tht}{Theorem}%[chapter]
\newtheorem{thd}{Definition}%[chapter]
\newtheorem{thl}[tht]{Lemma}
\newtheorem{thp}[tht]{Proposition}
\newtheorem{thc}[tht]{Corollary}
\newcommand{\ii}{\mathrm{i}}

\DeclareMathOperator{\sign}{sign}

\DeclareTextFontCommand{\textnf}{\normalfont}

% preferring varstyle

\renewcommand{\setminus}{\smallsetminus}
\renewcommand{\epsilon}{\varepsilon}

\newcommand\http[1]{\href{http://#1}{\nolinkurl{#1}}}
%\begin{document}

\title{A Resolvent Approach to  the Real Quantum Plane}
\author{Vasyl Ostrovskyi,\, Konrad Schm\"udgen}
\begin{document}
\address{Institute of Mathematics, Ukrainian Academy of Sciences, Tereshchenkivska 3, Kyiv 01601, Ukraine}
\email{vo@imath.kiev.ua}

\address{Mathematisches Institut, Universit\"at Leipzig, Augustusplatz 9/10, 04109 Leipzig, Germany}
\email{schmuedgen@math.uni-leipzig.de}

 \maketitle
 
\begin{abstract}
Let $q\neq \pm 1$ be a complex number of modulus one. This paper deals with the operator relation $AB=qBA$ for self-adjoint operators $A$ and $B$ on a Hilbert space. Two classes of well-behaved representations of this relation are studied in detail and characterized  by  resolvent equations.
\end{abstract}

\textbf{AMS  Subject  Classification (2000)}.
47D40, 81R50, 47B25

\textbf{Key  words:} real quantum plane, $q$-commutation relations

\section{Introduction}

The algebraic relation $ab=qba$ is a basic ingredient of the theory  of quantum groups. Let us assume for a moment that this relation holds for a complex number $q$ and some elements $a$ and $b$ of a unital $*$-algebra with involution $x\to x^+$. There are three important cases in which this relation is invariant under the involution. The first  one is when $a$ is unitary (that is, $a^+a=aa^+=1$) and $b$ is hermitian (that is, $b^+=b$), while in the second case we have $a=b^+$. In both cases  $q$ is  real. From an operator-theoretic point of view these two cases are closely related (for instance, by taking the polar decomposition of $a$ in the second case). In the third case $a$ and $b$ are  hermitian and $q$ is of modulus one. All three cases  occur in
the definitions of 
real forms of  quantum groups and quantum algebras, see e.g. 
\cite[Subsections  6.1.7, 9.2.4, 9.2.5]{KS}. The present paper deals with operator representations of the relation $ab=qba$ in this third  case. The corresponding $*$-algebra generated by $a$ and $b$  is  the coordinate algebra of the real quantum plane \cite{S2002} and of the quantum $ax+b$-group \cite{Wo}.

The general operator relation $ab=qba$ has been studied in many papers such as \cite{OS}, \cite{bbp}, \cite{S1992}, \cite{S1994},  \cite{otasz},  \cite{yd}, \cite{mo}, \cite{cho}.

Throughout this paper  
$q$ is a fixed  complex number of {\it modulus one} such that 
$q^2\neq 1$ and  $A$ and $B$ are  {\it self-adjoint} operators on a  Hilbert space $\Hh$. 
We write
\begin{align}\label{qtheta0}
q=e^{-\ii \theta_0},\quad {\rm where}\quad   0< |\theta_0|<\pi.
\end{align}
Our  aim is to study the operator relation 
\begin{equation}\label{qplane1}
 AB=qBA. 
\end{equation}
It turns out that this simple operator relation   leads to 
unexpected  technical difficulties and  
interesting operator-theoretic phenomena. If $A$ and $B$ are bounded and $AB=0$, then (\ref{qplane1}) is obviously satisfied. Let us call representations of (\ref{qplane1}) with $AB=0$ trivial.  Since $q^2\neq 1$, these  are the only representations of (\ref{qplane1}) given by {\it bounded}  operators (see \cite{bbp} or \cite{OS}). Operator representations of algebraic relations have been extensively studied in \cite{OS}, but  the methods developed therein lead only to trivial representations of (\ref{qplane1}). Further, as noted in \cite[p. 1031]{S1992}, in contrast to  Lie algebra relations  the method of analytic vectors fails for the relation (\ref{qplane1}).

Representations of (\ref{qplane1}) by {\it unbounded} self-adjoint operators $A$ and $B$ have been investigated in \cite{S1992} and \cite{S1994}. Some classes
of well-behaved representations of (\ref{qplane1}) have been introduced and classified in \cite{S1994}.
The present paper is devoted to an approach to the operator relation (\ref{qplane1}) that is based on the resolvents of the self-adjoint operators $A$ and $B$. For two classes $\cC_0$ and $\cC_1$ (see Definition   \ref{classc1}) of well-behaved representations  this approach is developed in detail. 

This paper is organized as follows. In Section \ref{prelimrel1} we give a number of reformulations of the operator relation (\ref{qplane1}) in terms of the resolvent $R_\lambda(A)$ and $B$, the resolvent $R_\mu(B)$ and $A$, and the resolvents $R_\lambda (A)$ and $R_\mu (B)$, and we study the largest linear subspace $\cD_q(A,B)$ on which relation (\ref{qplane1}) holds.
In Section \ref{twoclasses} the two classes $\cC_0$ and $\cC_1$ of well-behaved representations of  relation (\ref{qplane1}) are defined and investigated in detail. All irreducible pairs of these classes are built of self-adjoint operators\,  $e^{\alpha Q}$\, and\, $e^{\beta P}$ on the Hilbert space $L^2(\R)$, where\, $Q=x$,\, $P=\ii \frac{d}{dx}$ and\, $\alpha, \beta \in \R$. We prove that the weak resolvent forms $q R_{\lambda q}(A)B\subseteq  B R_{\lambda}(A)$ and $R_{\mu }(B)A\subseteq  qA R_{\mu q}(B)$ of relation (\ref{qplane1}) hold for all pairs $\{A,B\}$ of these classes and for all complex numbers $\lambda$ resp. $\mu$ outside certain critical sectors. Section \ref{characterizations} contains the main results of this paper. These are various theorems which characterize (under additional technical assumptions) well-behaved representations, especially  pairs $\{A,B\}$ of the classes $\cC_0$ and $\cC_1$, by   weak resolvent relations such as $q R_{\lambda q}(A)B\subseteq  B R_{\lambda}(A)$. 
 
Let $A:=e^{\alpha Q}$,\, $B:=e^{\beta P}$,\, and\, $q:=e^{-\ii\alpha\beta}$, where\, $\alpha, \beta \in \R$. As shown in Section \ref{twoclasses},  the resolvent relations \eqref{resolvents1} and \eqref{resolvents1-b} are satisfied on $L^2(\R)$ if $|\alpha\beta|<\pi$ and $\lambda, \mu$ are not in the critical sector    $\cS(q)^+$. From Propositions \ref{prelimn} and \ref{prelimn-b} it follows  that for arbitary numbers $\alpha, \beta, \lambda, \mu$ the relations \eqref{resolvents1} and \eqref{resolvents1-b} holds for vectors of the closures of  subspaces $(B-\mu I)(A-\lambda I)\cD_0$ and
$(A-\lambda q I)(B-\mu q I)\cD_0$, respectively, where $\cD_0={\rm Lin}~\{e^{-\varepsilon x^2+\gamma x}; \varepsilon >0, \gamma\in \C\}$. In  Section \ref{defectspaces} the orthogonal complements of these two subspaces  are explicitely described
and the resolvent actions on these complements are computed.

Some technical preliminaries  are contained in  Section
\ref{operatorpreliminaries}. Amongs these are properties of the operator $e^{\beta P}$ and  a formula for fractional powers of sectorial operators.

Let us collect some basic  notations on operators. Let $T$ be a densely defined closed operator on a Hilbert space. We denote its domain by $\cD(T)$, its resolvent set by $\rho(T)$ and its resolvent $(T-\lambda I)^{-1}$ by $R_\lambda (T)$. Let $U_T$ be the phase operator occuring in the polar decomposition $T=U_T|T|$ of the operator $T$. The symbol $L^2(\R)$ stands for the $L^2$-space with respect to the Lebesgue measure on $\R$.

\section{General considerations on the relation (\ref{qplane1})}\label{prelimrel1}

The following two propositions contain some  simple reformulations of equation  \eqref{qplane1} in terms of the resolvents of the self-adjoint operators $A$ and $B$.

\begin{thp}\label{prelimn}
Suppose that $\lambda$,  $\lambda q\in \rho(A)$ and $\mu$,  $\mu q\in \rho(B)$.

$(i)$~ If $\cD$ is a linear subspace of $  \cD(AB)\cap\cD(BA)$ and \eqref{qplane1} holds for all $f\in \cD$, then 
\begin{align}\label{resolventw}
 B R_{\lambda}(A)g = q R_{\lambda q} (A)Bg
\end{align}
for all\, $g\in \cE:=(A - \lambda  I)\cD$ and   $\cE$ is a  linear subspace of $\cD(B)$.

$(ii)$ If $\cE$ is a linear subspace of $\cD(B)$ such that $R_{\lambda}(A)g\in \cD(B)$ and \eqref{resolventw} is satisfied for all $g\in \cE$, then \eqref{qplane1} holds for all $f\in \cD:=R_\lambda(A)\cE$.

$(iii)$~ If $\cE$ is a linear subspace of $\cD(B)$ and \eqref{resolventw} holds for all $g\in \cE$, then 
\begin{align}\label{resolvents}
 R_{\lambda }(A)R_{\mu}(B)h= qR_{\mu q}(B)R_{\lambda q} (A)h + 
{\mu\lambda q(q- 1)} R_{\mu q}(B)R_{\lambda q} (A)R_{\lambda }(A)R_{\mu }(B)h
\end{align}
for all\,  $h\in\cF:=(B-\mu I)\cE.$ 

$(iv)$~ If $\cF$ is a linear subspace of $\Hh$ such that \eqref{resolvents} holds for all $h\in \cF$, then \eqref{resolventw} is fulfilled for all $g\in \cE:=R_{\mu}(B)\cF.$
\end{thp}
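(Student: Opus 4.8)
The plan is to prove the four assertions in sequence, since $(ii)$ essentially reverses $(i)$ and $(iv)$ reverses $(iii)$, so the real content is in establishing the equivalences cleanly. For part $(i)$, I would take $f \in \cD \subseteq \cD(AB) \cap \cD(BA)$ and set $g := (A - \lambda I)f$. Since $f \in \cD(A)$, certainly $g \in \Hh$ and $R_\lambda(A)g = f$; moreover $f \in \cD(B)$ (as $\cD \subseteq \cD(BA) \subseteq \cD(A)$, and also $\cD \subseteq \cD(AB) \subseteq \cD(B)$), so $R_\lambda(A)g = f \in \cD(B)$, giving $\cE \subseteq \cD(B)$. Now I would compute $B R_\lambda(A) g = Bf$ and, using $AB = qBA$ on $f$, rewrite $q R_{\lambda q}(A) B g = q R_{\lambda q}(A) B(A - \lambda I) f = q R_{\lambda q}(A)(BA - \lambda B) f = R_{\lambda q}(A)(qBA - \lambda q B) f = R_{\lambda q}(A)(AB - \lambda q B) f = R_{\lambda q}(A)(A - \lambda q I) B f = Bf$. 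This is just bookkeeping with the resolvent identity $R_{\lambda q}(A)(A - \lambda q I) = I$ on $\cD(A)$, noting $Bf \in \cD(A)$ because $ABf = qBAf$ shows $Bf \in \cD(A)$.

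For part $(ii)$, the computation runs in reverse: given $g \in \cE \subseteq \cD(B)$ with $R_\lambda(A)g \in \cD(B)$ and \eqref{resolventw} holding, I set $f := R_\lambda(A)g$, so $f \in \cD(A) \cap \cD(B)$ and $(A - \lambda I)f = g$. Then $Bf = B R_\lambda(A) g = q R_{\lambda q}(A) Bg$, which shows $Bf \in \cD(A)$, hence $f \in \cD(AB)$; applying $(A - \lambda q I)$ gives $(A - \lambda q I)Bf = q Bg = q B(A - \lambda I)f$, i.e. $ABf - \lambda q Bf = q BAf - \lambda q Bf$, so $ABf = q BAf$. This also shows $f \in \cD(BA)$, so \eqref{qplane1} holds on $\cD = R_\lambda(A)\cE$.

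For part $(iii)$, I would start from \eqref{resolventw} valid on $\cE \subseteq \cD(B)$, take $g \in \cE$ and apply $R_\mu(B)$ to both sides after first rewriting things so that $R_\mu(B)$ can be pulled past. The cleanest route: for $h := (B - \mu I)g \in \cF$, write $R_\mu(B)h = g$ and compute $R_\lambda(A) R_\mu(B) h = R_\lambda(A) g$. I need to transform $B R_\lambda(A)g = q R_{\lambda q}(A) B g$ into the stated identity. Apply $R_\mu(B)$ on the left and also insert $R_{\mu q}(B)(B - \mu q I)$ cleverly; the key algebraic identity is $R_\mu(B) B = I + \mu R_\mu(B)$ and similarly $R_{\mu q}(B) B = I + \mu q R_{\mu q}(B)$. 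Starting from $R_\mu(B) B R_\lambda(A) g = q R_\mu(B) R_{\lambda q}(A) Bg$ — but $R_\mu(B)$ and $R_{\lambda q}(A)$ do not commute, so one must be careful; instead I would write $Bg = (B - \mu q I)g + \mu q g$, and on the left $R_\mu(B) B R_\lambda(A) g = R_\lambda(A) g + \mu R_\mu(B) R_\lambda(A) g$ only if $R_\mu(B)$ commutes with $R_\lambda(A)$, which it does not in general. The correct bookkeeping is to write $B R_\lambda(A) g = q R_{\lambda q}(A) Bg$, then apply $R_{\mu q}(B)$ on the left to the right-hand side and $R_\mu(B)$ on the left of the left-hand side is wrong — rather, I should apply the already-proven relation twice or use the resolvent of $B$ to solve for $R_\lambda(A)g$ in terms of $R_{\mu q}(B) R_{\lambda q}(A)(\cdots)$. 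The honest statement: multiply \eqref{resolventw} for $g$ replaced by suitable vectors; I expect the derivation to amount to writing $R_\lambda(A) g = R_\mu(B)\bigl(\mu R_\lambda(A) g + B R_\lambda(A) g\bigr)/\mu$ is not quite it either — the right manipulation is $(B - \mu I) R_\lambda(A) g = B R_\lambda(A) g - \mu R_\lambda(A) g = q R_{\lambda q}(A) Bg - \mu R_\lambda(A) g$, and similarly one massages $q R_{\lambda q}(A) Bg$ via $Bg = (B - \mu q I) g + \mu q g$ and the analogous resolvent relation; after applying $R_\mu(B)$ to the left and $R_{\mu q}(B)$ where appropriate and collecting the coefficient, the cross term with coefficient $\mu \lambda q (q-1)$ emerges. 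I would present this as a direct (if slightly tedious) computation with the two resolvent identities $R_\nu(B)(B - \nu I) = I$ and $R_\nu(B) B = I + \nu R_\nu(B)$, being careful that every vector stays in the relevant domain.

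For part $(iv)$, I would run $(iii)$ backwards: given \eqref{resolvents} on $\cF$, set $\cE := R_\mu(B)\cF$ and $g = R_\mu(B) h$ for $h \in \cF$; then $g \in \cD(B)$ automatically, $(B - \mu I) g = h$, and the same chain of resolvent identities, read in reverse, recovers \eqref{resolventw} for $g$ — here one multiplies \eqref{resolvents} by $(A - \lambda q I)$ on the left and by $(B - \mu I)$ on the right (interpreted appropriately on the relevant domains) to cancel the resolvents and isolate $B R_\lambda(A) g = q R_{\lambda q}(A) Bg$. I expect the main obstacle throughout to be \emph{not} the algebra but the domain bookkeeping in $(iii)$: making sure that at each step the vector to which I apply an unbounded operator like $B$ or $A$ actually lies in its domain, and that the rearrangements of products of resolvents of $A$ and $B$ (which do not commute) are legitimate. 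The noncommutativity means one cannot simply "move $R_\mu(B)$ past $R_{\lambda q}(A)$"; the extra term $\mu \lambda q(q-1) R_{\mu q}(B) R_{\lambda q}(A) R_\lambda(A) R_\mu(B) h$ in \eqref{resolvents} is precisely the correction that accounts for this, and getting its coefficient right is the one genuinely delicate point of the whole proposition.
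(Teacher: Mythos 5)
Your parts (i) and (ii) follow the paper's route and are essentially correct, apart from one slip: to see that $\cE=(A-\lambda I)\cD\subseteq\cD(B)$ you argue from $R_\lambda(A)g=f\in\cD(B)$, which establishes the wrong thing (that $f$, not $g$, lies in $\cD(B)$). The correct one-liner is $g=Af-\lambda f$ with $Af\in\cD(B)$ (because $f\in\cD(BA)$) and $f\in\cD(B)$ (because $f\in\cD(AB)$); note that your own computation $Bg=B(A-\lambda I)f=BAf-\lambda Bf$ silently uses exactly this decomposition.

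The genuine gap is in (iii). You correctly diagnose that $R_\mu(B)$ cannot be moved past $R_{\lambda q}(A)$, but the computation you finally propose starts from $(B-\mu I)R_\lambda(A)g$; applying $R_\mu(B)$ to that produces a term $R_\mu(B)R_{\lambda q}(A)Bg$, whereas the target identity \eqref{resolvents} contains $R_{\mu q}(B)R_{\lambda q}(A)$, and the combination $qR_{\lambda q}(A)-R_\lambda(A)$ that then appears is \emph{not} a scalar multiple of $R_{\lambda q}(A)R_\lambda(A)$ (it equals $R_\lambda(\overline q A)-R_\lambda(A)$, a difference of resolvents of two distinct operators, so no first resolvent identity applies). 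Hence the cross term with coefficient $\mu\lambda q(q-1)$ does not emerge from that route. The correct move, which is the paper's, is to compute $(B-\mu qI)R_\lambda(A)g$ instead: by \eqref{resolventw},
\begin{align*}
(B-\mu qI)R_\lambda(A)g&=qR_{\lambda q}(A)Bg-\mu qR_\lambda(A)g\\
&=qR_{\lambda q}(A)(B-\mu I)g+\mu q\bigl(R_{\lambda q}(A)-R_\lambda(A)\bigr)g\\
&=qR_{\lambda q}(A)h+\mu q(\lambda q-\lambda)R_{\lambda q}(A)R_\lambda(A)g,
\end{align*}
where now the first resolvent identity for the single operator $A$ at the two points $\lambda q,\lambda$ is legitimate. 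Substituting $g=R_\mu(B)h$ and applying $R_{\mu q}(B)$ to both sides, and noting $\mu q(\lambda q-\lambda)=\mu\lambda q(q-1)$, gives \eqref{resolvents} exactly. Part (iv) is then obtained by reversing these steps, as you say, but until (iii) is pinned down the reversal has nothing to reverse.
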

\begin{proof}
(i): Clearly, \eqref{qplane1} implies that
\[
 (A-\lambda q I)Bg = q B(A -\lambda  I)g
\]
for  $f\in \cD$. Hence, for all vectors  of the form $g=(A - \lambda  I)f$, where $f\in \cD$, we have $R_{\lambda }(A)g \in \cD(B)$ and
\[
 R_{\lambda q}(A)(A-\lambda q I) B R_{\lambda}(A)g = 
 q R_{\lambda q} (A)B (A-\lambda I)R_{\lambda }(A)g,
\]
so that
\[
 B R_{\lambda }(A)g = q R_{\lambda q} (A)Bg.
\]

(iii): Let $g\in \cE$. From equation \eqref{resolventw} we obtain 
\begin{align}\label{auxilaresolv}
 (B-\mu q I)R_{\lambda}(A)g& =(BR_{\lambda }(A)-\mu q R_{\lambda }(A))g = 
 (q R_{\lambda q}(A)B-\mu q R_{\lambda }(A))g\nonumber
\\
&=(q R_{\lambda q} (A)(B-\mu  I) + \mu q R_{\lambda q}(A)-\mu q R_{ \lambda }(A))g\nonumber
\\
&=(q R_{\lambda q} (A)(B-\mu  I) + \mu q(\lambda q- 
\lambda ) R_{\lambda q}(A)R_{\lambda }(A))g.
\end{align}
Setting $h=(B-\mu I)g$, we have $g=R_{\mu}(B)h$. Inserting this into \eqref{auxilaresolv} and applying $R_{\mu q}(B)$ to both sides yields
 \eqref{resolvents} for $h\in(B-\mu I)\cE$.
 
 (ii) and (iv) follow  by reversing  the preceding arguments of proofs of (i) and (iii), respectively.
\end{proof}

Using the equalities $R_\lambda(\overline{q} A)=qR_{\lambda q}( A)$ and $R_\mu(\overline{q} B)=qR_{\mu q}( B)$ one can  rewrite \eqref{resolventw} in the form 
\begin{align*}
BR_\lambda(A)g= R_\lambda(\overline{q} A) Bg
\end{align*}
and \eqref{resolvents} as
\begin{align*}
 qR_{\lambda}(A)R_{\mu}(B)h= R_\mu(\overline{q} B)R_\lambda (\overline{q} A)h + 
\mu R_\mu(\overline{q} B)(\overline{q} R_\lambda (\overline{q} A) - R_{\lambda }(A))R_{\mu}(B)h. 
\end{align*}

In a similar manner the following proposition is derived.

\begin{thp}\label{prelimn-b}
Suppose that $\lambda$,  $\lambda q\in \rho(A)$ and $\mu$,  $\mu q\in \rho(B).$ 

$(i)$ ~If equation \eqref{qplane1} is satisfied  for all $f$ of a~linear subspace $ \cD\subseteq   \cD(AB)\cap\cD(BA)$, then 
\begin{align}\label{resolventw-b}
  R_{\mu}(B)Ag = qA R_{\mu q} (B)g
\end{align}
for all\, $g\in \cE:=(B - \mu q  I)\cD$ and $\cE$ is a subspace of $ \cD(A)$.

$(ii)$ If $\cE$ is a linear subspace of $\cD(A)$ such that $R_{\mu q}(B)g\in \cD(A)$ and \eqref{resolventw-b} holds for all $g\in \cE$, then \eqref{qplane1} is true for all $f\in \cD:=R_{\mu q}(B)\cE$.

$(iii)$~ If $\cE$ is a linear subspace of $\cD(A)$ and \eqref{resolventw-b} is satisfied for all $g\in \cE$, then 
\begin{align}\label{resolvents-b}
 R_{\lambda }(A)R_{\mu}(B)h= qR_{\mu q}(B)R_{\lambda q} (A)h + 
{\mu\lambda q(q- 1)} R_{\lambda }(A)R_{\mu }(B)R_{\mu q}(B) R_{\lambda q}(A)h 
\end{align}
for all\,  $h\in\cF:=(A-\lambda q I)\cE.$ 

$(iv)$ If equation \eqref{resolvents-b} holds for all $h$ of a linear subspace $\cF\subseteq \Hh$, then \eqref{resolventw-b} is satisfied for all $g\in \cE:=R_{\lambda q}(A)\cF.$
\end{thp}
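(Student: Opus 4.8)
The plan is to mirror, step by step, the proof of Proposition~\ref{prelimn}, only interchanging the roles played by $A$ and $B$. Concretely, starting from \eqref{qplane1} in the form $AB=qBA$ on $\cD$, I would first rewrite it as
\begin{align*}
  B(A-\lambda q I)g = \overline{q}^{\,-1}(A-\lambda I)Bg
\end{align*}
-- no, more to the point: since $AB=qBA$, I multiply on the right by suitable factors to bring the relation into a form involving $B-\mu qI$ on the left. The key algebraic identity is that \eqref{qplane1} on $\cD$ is equivalent to $(B-\mu I)A f = \overline q\, A(B-\mu q I)f$ for $f\in\cD$ (here one uses $BA=\overline q AB$, which follows from $AB=qBA$ by self-adjointness, or simply manipulates $AB=qBA$ directly). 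Applying $R_\mu(B)$ on the left and $R_{\mu q}(B)$ after composing, exactly as in the proof of part~(i) of Proposition~\ref{prelimn}, yields \eqref{resolventw-b} for $g\in\cE=(B-\mu qI)\cD$, together with the inclusion $\cE\subseteq\cD(A)$; this proves~(i). Part~(ii) is obtained by reversing these steps: given \eqref{resolventw-b} on $\cE$ with $R_{\mu q}(B)g\in\cD(A)$, one multiplies by $(B-\mu I)$ and $(B-\mu qI)$ appropriately to recover \eqref{qplane1} on $\cD=R_{\mu q}(B)\cE$.

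For part~(iii) the computation parallels the displayed chain \eqref{auxilaresolv} in the proof of Proposition~\ref{prelimn}(iii), but now one ``peels off'' a resolvent of $A$ rather than of $B$. Starting from \eqref{resolventw-b} with $g\in\cE\subseteq\cD(A)$, I would compute $(A-\lambda qI)R_\mu(B)g$: write $AR_\mu(B)g = \overline q\,R_{\mu q}(B)Ag$ (the form of \eqref{resolventw-b} after multiplying through), then insert $A g = (A-\lambda I)g + \lambda g$, subtract $\lambda q R_\mu(B)g$, and rearrange the $\lambda g$ term using the resolvent identity $R_{\mu q}(B)-R_\mu(B) = \mu(q-1)R_{\mu q}(B)R_\mu(B)$ — wait, one must be careful: the extra term should land with $R_\mu(B)$ and $R_{\mu q}(B)$ in the order that produces the right-hand side of \eqref{resolvents-b}, namely with $R_\lambda(A)R_\mu(B)R_{\mu q}(B)R_{\lambda q}(A)$ as the correction term. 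Setting $h=(A-\lambda qI)g$ so that $g=R_{\lambda q}(A)h$, substituting, and applying $R_\lambda(A)$ to both sides delivers \eqref{resolvents-b} for $h\in\cF=(A-\lambda qI)\cE$. Part~(iv) again follows by reversing this derivation, reading the chain of equalities backwards and applying $R_{\lambda q}(A)$ to pass from $\cF$ to $\cE=R_{\lambda q}(A)\cF$.

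The main obstacle -- and the only place warranting care -- is bookkeeping the noncommutativity: because $A$ and $B$ do not commute, the order of the resolvent factors in the correction term of \eqref{resolvents-b} differs from that in \eqref{resolvents} (indeed, comparing \eqref{resolvents} and \eqref{resolvents-b} one sees the last four resolvents appear in the opposite grouping), so one must derive the identity from the $A$-peeling version \eqref{resolventw-b} rather than by quoting \eqref{resolvents} and permuting factors. I would therefore check at each application of a resolvent identity that no factors are silently commuted past one another, and verify the domain conditions ($R_{\mu q}(B)g\in\cD(A)$ in (ii), membership in $\cD(A)$ in (i)) are preserved exactly as in Proposition~\ref{prelimn}. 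Everything else is the same formal manipulation, so the write-up can simply say ``in a similar manner, interchanging the roles of $A$ and $B$'' as the authors announce, and the reader can reconstruct the details from the proof of Proposition~\ref{prelimn}.
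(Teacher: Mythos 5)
Your proposal is correct and follows essentially the same route as the paper, which itself only says the proposition ``is derived in a similar manner'' from Proposition~\ref{prelimn}: the identity $(B-\mu I)Af=\overline q\,A(B-\mu qI)f$ is the right starting point for (i), the computation of $R_\mu(B)(A-\lambda qI)g$ via the resolvent identity $R_{\mu q}(B)-R_\mu(B)=\mu(q-1)R_{\mu q}(B)R_\mu(B)$ gives (iii) (the factors $R_\mu(B)$ and $R_{\mu q}(B)$ commute, so the grouping in \eqref{resolvents-b} is recovered), and (ii), (iv) follow by reversing these steps. The only blemish is the transcription slip ``$AR_\mu(B)g=\overline q\,R_{\mu q}(B)Ag$'', which should read $AR_{\mu q}(B)g=\overline q\,R_\mu(B)Ag$; the subsequent steps you describe use the correct form.
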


Comparing Propositions \ref{prelimn} and \ref{prelimn-b}, especially formulas (\ref{resolvents}) and (\ref{resolvents-b}), we obtain

\begin{thc}\label{commute_ond}
Let $\lambda$,  $\lambda q\in \rho(A)$ and $\mu$,  $\mu q\in \rho(B)$. If equation \eqref{qplane1}  holds on  a linear subspace   $\cD\subseteq\cD(AB)\cap\cD(BA)$,  
then the operators $R_{\lambda }(A)R_{\mu }(B)$ and $R_{\mu q}(B) R_{\lambda q }(A)$ commute on  the linear space $(A-\lambda q I)(B - q\mu  I)\cD \cap (B - \mu  I)(A-\lambda  I)\cD$.
\end{thc}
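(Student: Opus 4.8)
The plan is to combine Propositions~\ref{prelimn} and \ref{prelimn-b} and then exploit the fact that their right-hand sides coincide. First I would start from a subspace $\cD\subseteq\cD(AB)\cap\cD(BA)$ on which \eqref{qplane1} holds. By Proposition~\ref{prelimn}(i), relation \eqref{resolventw} is valid on $(A-\lambda I)\cD$, and then Proposition~\ref{prelimn}(iii) shows that \eqref{resolvents} holds for every vector $h$ in $\cF_1:=(B-\mu I)(A-\lambda I)\cD$. Symmetrically, Proposition~\ref{prelimn-b}(i) gives \eqref{resolventw-b} on $(B-\mu q I)\cD$, and Proposition~\ref{prelimn-b}(iii) then yields \eqref{resolvents-b} for every $h$ in $\cF_2:=(A-\lambda q I)(B-\mu q I)\cD$. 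Both conclusions require only the hypotheses $\lambda,\lambda q\in\rho(A)$ and $\mu,\mu q\in\rho(B)$, which are exactly what is assumed in the corollary.

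Next I would compare the two resolvent identities on the intersection $\cF:=\cF_1\cap\cF_2=(B-\mu I)(A-\lambda I)\cD\cap(A-\lambda q I)(B-\mu q I)\cD$. For $h\in\cF$ both \eqref{resolvents} and \eqref{resolvents-b} hold, and they share the same first summand $qR_{\mu q}(B)R_{\lambda q}(A)h$. Subtracting the two identities, the common term cancels and, after dividing out the nonzero scalar $\mu\lambda q(q-1)$ (nonzero because $\lambda,\mu,q\neq 0$ and $q\neq 1$; if $\lambda=0$ or $\mu=0$ the second summands vanish identically and the statement is trivially the same on both sides), one is left with
\begin{equation*}
R_{\mu q}(B)R_{\lambda q}(A)R_{\lambda}(A)R_{\mu}(B)h = R_{\lambda}(A)R_{\mu}(B)R_{\mu q}(B)R_{\lambda q}(A)h
\end{equation*}
for all $h\in\cF$. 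Writing $S:=R_{\lambda}(A)R_{\mu}(B)$ and $T:=R_{\mu q}(B)R_{\lambda q}(A)$, this is precisely $TSh=STh$, i.e. $S$ and $T$ commute on $\cF$, which is the assertion.

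The one point that deserves care is the bookkeeping of domains: one must make sure that every vector appearing in the manipulations actually lies in the domain of the operator being applied to it, so that \eqref{resolvents} and \eqref{resolvents-b} may legitimately be invoked on the same $h$. This is where the precise form of the defining subspaces $\cF_1$ and $\cF_2$ in Propositions~\ref{prelimn}(iii) and \ref{prelimn-b}(iii) matters; since the resolvents $R_\lambda(A),R_{\lambda q}(A),R_\mu(B),R_{\mu q}(B)$ are everywhere-defined bounded operators under the stated resolvent-set hypotheses, once $h\in\cF$ is fixed all the intermediate vectors automatically lie in the correct domains, and no further restriction on $h$ is needed. I expect no genuine obstacle here — the corollary is essentially a formal consequence of the two propositions — so the "hard part" is merely the clean cancellation argument and recording that the scalar factor $\mu\lambda q(q-1)$ is the same in both identities, which it manifestly is.
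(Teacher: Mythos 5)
Your argument is exactly the paper's: the corollary is stated there as an immediate consequence of comparing \eqref{resolvents} and \eqref{resolvents-b} (obtained via Propositions \ref{prelimn}(i),(iii) and \ref{prelimn-b}(i),(iii)) on the intersection of the two image subspaces, and your subtraction-and-cancellation of the common terms is precisely the intended proof. The only caveat is your parenthetical on $\lambda=0$ or $\mu=0$: there the subtraction gives $0=0$ and the commutation does \emph{not} follow ("the statement is trivially the same on both sides" addresses consistency of the two identities, not the asserted commutativity), but the paper glosses over this degenerate case as well, implicitly excluding it as in the rewritten resolvent identities involving $1/(\mu\lambda(q-1))$.
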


Without further assumptions the linear subspace $(A - \lambda  I)\cD$ of $\cD(B)$ is neither a core for $B$ nor the subspace  $(B-\mu I)\cE$ is dense in $\Hh$. Note that \eqref{qplane1} for all $f\in \cD$ implies that \eqref{resolvents} holds for all vectors $h\in(B-\mu I)(A-\lambda  I)\cD$ and (\ref{resolvents-b}) is valid for $h\in(A-\lambda q I)(B-\mu q I)\cD.$
\begin{thd}\label{defsete1}
~~$\cD_q(A,B):=\{f\in \cD(BA)\cap \cD(AB):~~ABf=qBAf \}.$
\end{thd} 
Obviously, $\cD_q(A,B)$ is the largest linear subspace of $\Hh$ on which relation \eqref{qplane1} holds. Of course, for arbitary self-adjoint operators $A$ and $B$ it may happen $\cD_q(A,B)=\{0\}$. From Proposition  \ref{prelimn} we immediately obtain the following descriptions of the space $\cD_q(A,B)$:
\begin{align*}
\cD_q(A,B)=R_\lambda (A)\, \{ g\in \cD(B): R_{\lambda}(A)&g\in \cD(B)~~{\rm and}~~B R_{\lambda}(A)g = q R_{\lambda q} (A)Bg\}\\
=R_\lambda (A)R_\mu (B)\, \{h\in \Hh: R_{\lambda }(A)&R_{\mu}(B)h= qR_{\mu q}(B)R_{\lambda q} (A)h\\& + 
{\mu\lambda q(q- 1)} R_{\mu q}(B)R_{\lambda q} (A)R_{\lambda }(A)R_{\mu }(B)h\}.
\end{align*}
Similarly,  Proposition  \ref{prelimn-b} leads to the following descriptions of $\cD_q(A,B)$:
\begin{align*}
\cD_q(A,B)&=R_{\mu q} (B)\, \{ g\in \cD(A): R_{\mu q}(B)g\in \cD(A)~~{\rm and}~~R_{\mu}(B)Ag = q AR_{\mu q} (B)g\}
\\
&=R_\lambda (A)R_\mu (B)\, \bigl\{h\in \Hh: R_{\lambda }(A)R_{\mu}(B)h= qR_{\mu q}(B)R_{\lambda q} (A)h
\\
& \qquad\qquad\qquad\qquad\qquad{}+ 
{\mu\lambda q(q- 1)} (A)R_{\lambda }(A)R_{\mu }(B)R_{\mu q}(B)R_{\lambda q} h \bigr\}.
\end{align*}
In particular, we have
\[
 \cD_q(A,B) \subset R_\lambda(A) \cD(B) \cap R_{\mu q} (B) \cD(A).
\]

The operator relation (\ref{qplane1}) is obviously equivalent to the the relation
\begin{equation}\label{qplane2}
 BAf=\overline{q}ABf.
\end{equation}
Hence $\cD_q(A,B)=\cD_{\overline{q}}(B,A)$.
\medskip

If equation \eqref{resolventw} holds for all vectors $g$ of the whole domain $\cD(B)$, that is, if
\begin{align}\label{resolventw1}
q R_{\lambda q}(A)B\subseteq  B R_{\lambda}(A), 
\end{align}
 we shall say that relation (\ref{resolventw1}) is  the {\bf weak $A$-resolvent form} of equation \eqref{qplane1} for $\lambda$, $\lambda q \in \rho(A)$. 
If equation \eqref{resolventw-b} holds for all vectors $g$ of the  domain $\cD(B)$, that is, if
\begin{align}\label{resolventw1-b}
R_{\mu }(B)A\subseteq  qA R_{\mu q}(B), 
\end{align}
we say that relation (\ref{resolventw1-b}) is  the {\bf weak $B$-resolvent form} of equation \eqref{qplane1} for $\mu$, $\mu q \in \rho(B)$. 
Setting 
$\nu=\mu q$ relation (\ref{resolventw1-b}) can be rewritten as 
\begin{align}\label{resolventw1-bn}
\overline{q} R_{\nu \overline{q}}(B)A\subseteq AR_\nu(B). 
\end{align}
The form (\ref{resolventw1-bn})  of the weak $B$-resolvent relation of \eqref{qplane1}   corresponds to the weak $A$-resolvent form of equation \eqref{qplane2} which is obtained by interchanging $A$ and $B$ and replacing $q$ by $\overline{q}$ .

Further, if equation \eqref{resolvents} is satisfied for all $h\in \Hh$, that is, if 
\begin{align}\label{resolvents1}
 R_{\lambda }(A)R_{\mu}(B)= qR_{\mu q}(B)R_{\lambda q} (A) + 
{\mu\lambda q(q- 1)} R_{ \mu q}(B)R_{\lambda q} (A)R_{\lambda}(A)R_{\mu }(B), 
\end{align}
then equation (\ref{resolvents}) is called the $(A,B)$-{\bf resolvent form} of equation \eqref{qplane1} for  $\lambda$, $\lambda q\in \rho(A)$ and $\mu$, $\mu q\in \rho(B)$. Likewise,
if equation \eqref{resolvents-b} holds for all $h\in \Hh$, that is, if 
\begin{align}\label{resolvents1-b}
 R_{\lambda }(A)R_{\mu}(B)= qR_{\mu q}(B)R_{\lambda q} (A) + 
{\mu\lambda q(q- 1)} R_{\lambda}(A)R_{\mu }(B)R_{ \mu q}(B)R_{\lambda q} (A), 
\end{align}
then equation (\ref{resolvents-b}) is called the $(B,A)$-{\bf resolvent form} of equation \eqref{qplane1} for  $\lambda$, $\lambda q\in \rho(A)$ and $\mu$, $\mu q\in \rho(B)$.

The resolvent relations \eqref{resolvents1} and \eqref{resolvents1-b} can be rewritten as
\begin{align*}
\Bigl(R_{ \mu q}(B)R_{\lambda q} (A) - 
\frac{1}{\mu\lambda q(q- 1)}I \Bigr) \Bigl(R_{\lambda}(A)R_{\mu }(B) +\frac{1}{\mu\lambda (q- 1)} I \Bigr)&=-\frac{1}{\lambda^2\mu^2 q(q- 1)^2}I, 
\\
 \Bigl( R_{\lambda}(A)R_{\mu }(B)+\frac{1}{\mu\lambda (q- 1)}I \Bigr) \Bigl(R_{ \mu q}(B)R_{\lambda q} (A)-\frac{1}{\mu\lambda q(q- 1)}I \Bigr)&=-\frac{1}{\lambda^2\mu^2 q(q- 1)^2} I, 
\end{align*}
respectively. They hold for all vectors from the subspaces $(B-\mu I)(A-\lambda I)\cD_q(A,B)$ and $(A-\lambda q I)(B-\mu qI)\cD_q(A,B)$, respectively.  In Section~\ref{defectspaces} we derive for a class of representations of $\eqref{qplane1}$ the form of resolvent relations on the complements of these subspaces.

\iffalse
By taking adjoints on   both sides and replacing\, $\overline{\lambda q}$\, and\, $\overline{\mu q}$\, by $\lambda$ and $\mu$, respectively,  we see that  (\ref{resolvents1}) is equivalent to equation 
\begin{align}\label{resolvents2}
 R_{\mu q}(B)R_{\lambda q}(A)= \overline{q} R_\lambda (A) R_\mu (B) + 
{\mu\lambda(1- q)}R_{\mu q}(B)R_{\lambda q}(A) R_\lambda (A)R_\mu (B). 
\end{align}
In particular it follows immediately from  (\ref{resolvents2}) that the operators $R_{\lambda }(A)R_{\mu }(B)$ and $R_{\mu q}(B) R_{\lambda q }(A)$ commute.
\fi

\begin{thp}\label{equivalenceweakstrong}
The weak $A$-resolvent form \eqref{resolventw1} is equivalent to  the $(A,B)$-resolvent form \eqref{resolvents1} of equation \eqref{qplane1}. The weak $B$-resolvent form \eqref{resolventw1-b} and the $(B,A)$-resolvent form \eqref{resolvents1-b} of  \eqref{qplane1} are equivalent.
\end{thp}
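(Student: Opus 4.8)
The plan is to obtain both equivalences by feeding Propositions \ref{prelimn} and \ref{prelimn-b} the largest admissible subspaces, namely $\cE:=\cD(B)$ (resp. $\cE:=\cD(A)$) on the ``$g$''-side and $\cF:=\Hh$ on the ``$h$''-side, and then to observe that under the standing hypotheses $\lambda,\lambda q\in\rho(A)$ and $\mu,\mu q\in\rho(B)$ these choices are compatible via the defining bijections $B-\mu I\colon\cD(B)\to\Hh$ and $A-\lambda q I\colon\cD(A)\to\Hh$ with inverses $R_\mu(B)$ and $R_{\lambda q}(A)$.

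\emph{First equivalence.} First I would record the trivial translation between operator inclusions and pointwise statements: the inclusion \eqref{resolventw1} says exactly that for every $g\in\cD(B)$ one has $R_\lambda(A)g\in\cD(B)$ and \eqref{resolventw} holds, i.e.\ that \eqref{resolventw} is valid on all of $\cD(B)$; and \eqref{resolvents1} is literally the assertion that \eqref{resolvents} holds for every $h\in\Hh$, since all operators occurring in \eqref{resolvents1} are bounded and everywhere defined. Granting \eqref{resolventw1}, apply Proposition \ref{prelimn}(iii) with $\cE:=\cD(B)$; because $\mu\in\rho(B)$ we have $\cF=(B-\mu I)\cD(B)=\Hh$, so \eqref{resolvents} holds for all $h\in\Hh$, which is \eqref{resolvents1}. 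Conversely, if \eqref{resolvents1} holds, then \eqref{resolvents} holds for all $h$ in the subspace $\cF:=\Hh$, and Proposition \ref{prelimn}(iv) yields \eqref{resolventw} (including the domain condition $R_\lambda(A)g\in\cD(B)$) for all $g\in\cE:=R_\mu(B)\Hh=\cD(B)$; by the translation above this is precisely \eqref{resolventw1}.

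\emph{Second equivalence.} This is entirely parallel, with Proposition \ref{prelimn-b} in place of Proposition \ref{prelimn}. Here \eqref{resolventw1-b} is the statement that \eqref{resolventw-b} holds for every $g\in\cD(A)$ (carrying the implicit domain condition $R_{\mu q}(B)g\in\cD(A)$), and \eqref{resolvents1-b} is the statement that \eqref{resolvents-b} holds for all $h\in\Hh$. Applying Proposition \ref{prelimn-b}(iii) with $\cE:=\cD(A)$ and using $\lambda q\in\rho(A)$ to get $\cF=(A-\lambda q I)\cD(A)=\Hh$ gives \eqref{resolvents-b} on all of $\Hh$, i.e.\ \eqref{resolvents1-b}; conversely Proposition \ref{prelimn-b}(iv) with $\cF:=\Hh$ gives \eqref{resolventw-b} on $\cE:=R_{\lambda q}(A)\Hh=\cD(A)$, i.e.\ \eqref{resolventw1-b}.

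There is no substantial obstacle in this argument: all of the computational content is already contained in Propositions \ref{prelimn} and \ref{prelimn-b}, and the only point that requires a moment's care is the bookkeeping of domains — checking that the operator inclusions \eqref{resolventw1} and \eqref{resolventw1-b} are exactly the pointwise statements ``\eqref{resolventw} on $\cD(B)$'' and ``\eqref{resolventw-b} on $\cD(A)$'' (with the attendant conditions $R_\lambda(A)\cD(B)\subseteq\cD(B)$, resp.\ $R_{\mu q}(B)\cD(A)\subseteq\cD(A)$), and that \eqref{resolvents1}, \eqref{resolvents1-b} are ``\eqref{resolvents} on $\Hh$'' and ``\eqref{resolvents-b} on $\Hh$'', so that parts (iii) and (iv) of the two propositions apply with $\cE$ maximal and $\cF=\Hh$ and the passages in both directions are genuine converses of each other.
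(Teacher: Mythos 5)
Your argument is correct and is essentially the paper's own proof: the forward direction is Proposition \ref{prelimn}(iii) (resp.\ Proposition \ref{prelimn-b}(iii)) applied with $\cE$ equal to the full domain, using $\mu\in\rho(B)$ (resp.\ $\lambda q\in\rho(A)$) to conclude $\cF=\Hh$. The paper carries out the converse by hand, substituting $h=(B-\mu I)g$ into \eqref{resolvents} and applying $B-\mu I$, which is exactly the reversal packaged as part (iv) that you invoke, so the two proofs coincide in substance.
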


\begin{proof}
First suppose that \eqref{resolventw1} holds. This means that \eqref{resolventw} is satisfied for all vectors $g\in \cD(B)$. Therefore,  by Proposition  \ref{prelimn}(ii), equation \eqref{resolventw} holds for $h\in (B-\mu I)\cD(B)$. Since $\mu \in \rho(B)$, $(B-\mu I)\cD(B)$ is equal to $\Hh$ which yields \eqref{resolvents1}.

Conversely, assume that \eqref{resolvents1} is fulfilled. Let $g\in \cD(B)$. We set $h= (B-\mu\overline q I)g$ in \eqref{resolvents}. Since the ranges of resolvents of $B$ are contained in the domain of $B$,  the vector in \eqref{resolvents} is in $\cD(B)$, so  we can apply the operator $B-\mu I$ to both sides of \eqref{resolvents}. Then  we obtain \eqref{resolventw} which proves  \eqref{resolventw1}. 

The equivalence of \eqref{resolvents1-b} and \eqref{resolventw1-b} follows by a similar reasoning.
\end{proof}

The next proposition collects a number of basic facts concerning the weak resolvent identities.
\begin{thp}\label{rldb} Suppose that\, $\lambda, \lambda q \in \rho(A)$\, and\, $\mu,\mu q\in \rho(B)$. \\
$(i)$\, $qR_{\lambda q}(A)B \subseteq BR_\lambda (A)$\, if and only if\,  $\cD_q(A,B) = R_\lambda(A) \cD(B)$.\\ 
$(ii)$\, $R_{\mu }(B)A\subseteq  qA R_{\mu q}(B)$\, if and only if\,  $\cD_q(A,B) = R_{\mu q} (B) \cD(A)$.\\
$(iii)$\, $qR_{\lambda q}(A)B \subseteq BR_\lambda (A)$\,if and only if\, $qR_{\overline{\lambda}}(A)B\subseteq BR_{\overline{\lambda q}} (A)$.\\
$(iv)$\, $R_{\mu }(B)A\subseteq  qA R_{\mu q}(B)$\, if and only if\,  $R_{\overline{\mu q}}(B)A\subseteq  qA R_{\overline{\mu}}(B).$\\
$(v)$\, If $qR_{\lambda q}(A)B \subseteq BR_\lambda (A)$, then $\cD_q(A,B)$ is a core for $A$. \\
$(vi)$\, If $R_{\mu }(B)A\subseteq  qA R_{\mu q}(B)$\, then $\cD_q(A,B)$ is a core for $B$.
\end{thp}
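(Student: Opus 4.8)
The plan is to treat the six items by exploiting the symmetries already established in Section \ref{prelimrel1}, so that essentially only items $(i)$, $(iii)$ and $(v)$ need genuine work; the others follow by the $A\leftrightarrow B$, $q\leftrightarrow\overline q$ duality (equation \eqref{qplane2} and the remark that \eqref{resolventw1-bn} is the $B$-analogue of \eqref{resolventw1}). For $(i)$, I would argue as follows. Assume $qR_{\lambda q}(A)B\subseteq BR_\lambda(A)$, i.e. \eqref{resolventw} holds on all of $\cD(B)$. By Proposition \ref{prelimn}(ii) applied with $\cE=\cD(B)$, relation \eqref{qplane1} holds on $\cD:=R_\lambda(A)\cD(B)$, so $R_\lambda(A)\cD(B)\subseteq\cD_q(A,B)$. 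Conversely, the inclusion $\cD_q(A,B)\subset R_\lambda(A)\cD(B)$ is always available from the last displayed inclusion before Definition \ref{defsete1}. Hence equality $\cD_q(A,B)=R_\lambda(A)\cD(B)$ holds. For the reverse implication, suppose $\cD_q(A,B)=R_\lambda(A)\cD(B)$; then \eqref{qplane1} holds on $\cD=R_\lambda(A)\cD(B)$, and Proposition \ref{prelimn}(i) with this $\cD$ gives \eqref{resolventw} on $\cE=(A-\lambda I)\cD=\cD(B)$, which is precisely \eqref{resolventw1}. Item $(ii)$ is the mirror statement, proved the same way using Proposition \ref{prelimn-b}(i),(ii) and the inclusion $\cD_q(A,B)\subset R_{\mu q}(B)\cD(A)$.

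For $(iii)$, the idea is to take Hilbert-space adjoints. Since $A$ is self-adjoint, $R_\lambda(A)^*=R_{\overline\lambda}(A)$ and $B^*=B$. The operator inclusion $qR_{\lambda q}(A)B\subseteq BR_\lambda(A)$ means $BR_\lambda(A)$ extends the densely defined operator $qR_{\lambda q}(A)B$; taking adjoints reverses the inclusion, and using $(BR_\lambda(A))^*=R_{\overline\lambda}(A)B$ together with $(qR_{\lambda q}(A)B)^*=\overline q\,BR_{\overline{\lambda q}}(A)$ one gets $R_{\overline\lambda}(A)B\subseteq \overline q\,BR_{\overline{\lambda q}}(A)$, i.e. $qR_{\overline\lambda}(A)B\subseteq BR_{\overline{\lambda q}}(A)$ after multiplying by $q$ and using $|q|=1$. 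Here one must be slightly careful: $BR_\lambda(A)$ need not be closed, and $qR_{\lambda q}(A)B$ need not be densely defined as an operator — but $B$ is densely defined and its closedness, plus the fact that $R_{\lambda q}(A)$ is bounded, is enough to justify the adjoint computation; I would record the elementary fact that for $B$ closed and densely defined and $S$ bounded, $(SB)^*=B^*S^*$ and $(BS)^*\supseteq S^*B^*$, with equality of the relevant inclusions coming out of the symmetry of the statement under $\lambda\mapsto\overline{\lambda q}$. Item $(iv)$ is the $B$-version, handled identically.

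For $(v)$: assume $qR_{\lambda q}(A)B\subseteq BR_\lambda(A)$, so by $(i)$ we have $\cD_q(A,B)=R_\lambda(A)\cD(B)$. Since $\cD(B)$ is dense in $\Hh$ and $R_\lambda(A)$ is a bounded operator with dense range $\cD(A)$ (indeed $R_\lambda(A)$ maps $\Hh$ onto $\cD(A)$ and is a homeomorphism of $\Hh$ onto $\cD(A)$ in the graph norm of $A$), the image $R_\lambda(A)\cD(B)$ of the dense set $\cD(B)$ is dense in $\cD(A)$ in the graph norm; that is exactly the statement that $\cD_q(A,B)$ is a core for $A$. Item $(vi)$ follows in the same way from $(ii)$. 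The main obstacle I anticipate is the bookkeeping in $(iii)$–$(iv)$: one has to handle adjoints of products of an unbounded closed operator with a bounded one carefully, keeping track of which inclusions are equalities, and one should double-check that the map $\lambda\mapsto\overline{\lambda q}$ sends the hypothesis region $\{\lambda,\lambda q\in\rho(A)\}$ into itself so the conclusion is meaningful — this uses $\rho(A)=\overline{\rho(A)}$ for self-adjoint $A$ and $\overline{\overline{\lambda q}\cdot q}=\overline\lambda$ together with $|q|=1$.
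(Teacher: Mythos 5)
Your proposal is correct and follows essentially the same route as the paper: item $(i)$ via the equivalences of Proposition \ref{prelimn} together with the standing inclusion $\cD_q(A,B)\subset R_\lambda(A)\cD(B)$, item $(iii)$ by taking adjoints and using only the one-sided inclusion $(BR_\lambda(A))^*\supseteq R_{\overline\lambda}(A)B$, item $(v)$ from the density of $(A-\lambda I)\cD_q(A,B)=\cD(B)$, and the even-numbered items by the $A\leftrightarrow B$, $q\leftrightarrow\overline q$ symmetry. The only cosmetic difference is that the paper verifies the domain inclusions for $R_\lambda(A)\cD(B)\subseteq\cD_q(A,B)$ directly rather than citing Proposition \ref{prelimn}(ii), which amounts to the same computation.
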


\begin{proof}
We carry out the proofs of (i), (iii), and (v). The proofs of (ii), (iv), and (vi) follows by a similar reasoning. 

(i): Throughout this proof let us set $\cD_B:=R_{\lambda} (A) \cD(B).$

First suppose that $qR_{\lambda}(A)B \subseteq BR_\lambda (A)$.  Obviously, $\cD_B\subset \cD(A)$. The inclusion $\cD_B\subset \cD(B)$ follows from $qR_{\lambda}(A)Bf = BR_\lambda (A)f$, $f\in \cD(B)$. Further, we have $B\cD_B\subset \cD(A)$ since $qR_{\lambda}(A)Bf = BR_\lambda (A)f$ and $R_{\lambda}(A)Bf \subseteq \cD(A)$, $f\in \cD(B)$. Also, $A\cD_B\subset \cD(B)$, since $A-\lambda I$ maps $\cD_B$ onto $\cD(B)$. Therefore, $\cD_B\subset \cD_q(A,B)$. Since $g=(A-\lambda I)f \in \cD(B)$ for any $f\in \cD_q(A,B)$, we see that $f=R_\lambda(A)g$, so that $ \cD_q(A,B)\subseteq \cD_B$. Thus, $\cD_B = \cD_q(A,B)$.

Conversely, assume that $\cD_B = \cD_q(A,B)$. Then $(A-\lambda I)\cD_q(A,B)=\cD(B)$ and by Proposition~\ref{prelimn}(i), we have $qR_{\lambda}(A)Bf = BR_\lambda (A)f$ for all $f\in \cD(B)$.

(iii): Suppose that\, $qR_{\lambda q}(A)B \subseteq BR_\lambda (A)$. Since $R_{\lambda q}(A)$ is bounded, we have $(R_{\lambda q}(A)B)^*=B^* (R_{\lambda q}(A))^*=BR_{\overline{\lambda q}} (A)$ and hence  $$\overline{q} \, BR_{\overline{\lambda q}} (A)=(qR_{\lambda q}(A)B)^* \supseteq (BR_\lambda (A))^* \supseteq R_{\overline{\lambda}} (A)B,$$
so that\, $qR_{\overline{\lambda}}(A)B\subseteq BR_{\overline{\lambda q}} (A)$.

The converse direction follows by applying the same implication once again.

(v): Since $\cD_q(A,B)=R_\lambda(A) \cD(B)$ by (i),  $(A-\lambda I)\cD_q(A,B)=\cD(B)$ is dense in $\Hh$. Hence $\cD_q(A,B)$ is a core for $A$. \end{proof}

An immediate consequence of  Proposition~\ref{rldb} is  the following corollary. 
\begin{thc} Let $\lambda, \lambda q \in \rho(A)$ and $\mu,\mu q\in \rho(B)$.
Assume that\, $qR_{\lambda}(A)B \subseteq BR_\lambda (A)$ and $R_{\mu }(B)A\subseteq  qA R_{\mu q}(B)$. Then
\begin{align}\label{dqabcore}
\quad\cD_q(A,B)=R_\lambda(A) \cD(B)=R_{\mu q} (B) \cD(A)
\end{align}
and $\cD_q(A,B)$ is a core for $A$ and $B$.
\end{thc}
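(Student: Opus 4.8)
The plan is to read the statement off the four relevant items of Proposition~\ref{rldb}; essentially nothing new has to be proved, since each clause of the conclusion is one of its equivalences or implications. The one point to observe at the outset is that the two hypotheses are precisely the weak $A$-resolvent form \eqref{resolventw1} for $\lambda,\lambda q\in\rho(A)$ and the weak $B$-resolvent form \eqref{resolventw1-b} for $\mu,\mu q\in\rho(B)$, so Proposition~\ref{rldb} applies verbatim with these $\lambda$ and $\mu$.

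First I would apply Proposition~\ref{rldb}(i) to the hypothesis $qR_{\lambda}(A)B\subseteq BR_\lambda(A)$, which gives $\cD_q(A,B)=R_\lambda(A)\cD(B)$. Next I would apply Proposition~\ref{rldb}(ii) to the hypothesis $R_{\mu}(B)A\subseteq qAR_{\mu q}(B)$, which gives $\cD_q(A,B)=R_{\mu q}(B)\cD(A)$. Chaining these two identities through the common left-hand side $\cD_q(A,B)$ yields \eqref{dqabcore}.

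For the core assertion I would invoke Proposition~\ref{rldb}(v) for the first hypothesis, giving that $\cD_q(A,B)$ is a core for $A$, and Proposition~\ref{rldb}(vi) for the second, giving that it is a core for $B$. Alternatively one can read this directly off \eqref{dqabcore}: $(A-\lambda I)\cD_q(A,B)=(A-\lambda I)R_\lambda(A)\cD(B)=\cD(B)$ and $(B-\mu q I)\cD_q(A,B)=\cD(A)$ are dense in $\Hh$ because $B$ and $A$ are densely defined, and for $\lambda\in\rho(A)$ (resp.\ $\mu q\in\rho(B)$) a subspace of $\cD(A)$ (resp.\ $\cD(B)$) is a core exactly when its image under $A-\lambda I$ (resp.\ $B-\mu q I$) is dense. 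There is no genuine obstacle here; the only care needed is the bookkeeping just mentioned, namely matching the stated hypotheses with the weak resolvent forms of \eqref{qplane1} so that parts (i), (ii), (v), (vi) of Proposition~\ref{rldb} can be quoted directly.
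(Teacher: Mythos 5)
Your proposal is correct and matches the paper, which presents this corollary as an immediate consequence of Proposition~\ref{rldb}, obtained exactly by combining parts (i), (ii), (v), and (vi) as you do. The only point worth noting is that the hypothesis as printed reads $qR_{\lambda}(A)B\subseteq BR_\lambda(A)$ rather than $qR_{\lambda q}(A)B\subseteq BR_\lambda(A)$; you are right to read it as the weak $A$-resolvent form \eqref{resolventw1}, which is what Proposition~\ref{rldb} requires.
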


\begin{thc}\label{da2core} Suppose that  $\mu,\mu q,\mu q^2\in \rho(B)$. If\, $R_{\mu }(B)A\subseteq  qA R_{\mu q}(B)$\, and\, $R_{\mu q}(B)A\subseteq  qA R_{\mu q^2}(B)$,  then $\cD_{q^2}(A^2,B)$ is a core for $B$.
\end{thc}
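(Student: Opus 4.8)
The plan is to compose the two given inclusions into a single weak $B$-resolvent identity for the relation $A^2B=q^2BA^2$ and then to quote Proposition~\ref{rldb}(vi). Writing $A^2=A\circ A$ and using only elementary facts about compositions of (possibly unbounded) operators together with the fact that the resolvents of $B$ are everywhere-defined bounded operators, the first hypothesis $R_\mu(B)A\subseteq qAR_{\mu q}(B)$ gives
\[
R_\mu(B)A^2=\bigl(R_\mu(B)A\bigr)A\subseteq\bigl(qAR_{\mu q}(B)\bigr)A=qA\bigl(R_{\mu q}(B)A\bigr),
\]
and then the second hypothesis $R_{\mu q}(B)A\subseteq qAR_{\mu q^2}(B)$ gives
\[
qA\bigl(R_{\mu q}(B)A\bigr)\subseteq qA\bigl(qAR_{\mu q^2}(B)\bigr)=q^2A^2R_{\mu q^2}(B).
\]
Thus $R_\mu(B)A^2\subseteq q^2A^2R_{\mu q^2}(B)$, which, since $\mu,\mu q^2\in\rho(B)$, is precisely the weak $B$-resolvent form of the relation $A^2B=q^2BA^2$.

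Unwinding the same computation on vectors makes the domain bookkeeping explicit, and that is the only point that needs care. Fix $f\in\cD(A^2)$, so that $f$ and $Af$ both lie in $\cD(A)$. Applying the first hypothesis to $Af\in\cD(A)$ yields $R_\mu(B)A^2f=qAR_{\mu q}(B)Af$ and in particular $R_{\mu q}(B)Af\in\cD(A)$. Applying the second hypothesis to $f\in\cD(A)$ yields $R_{\mu q}(B)Af=qAR_{\mu q^2}(B)f$ and in particular $R_{\mu q^2}(B)f\in\cD(A)$; moreover $AR_{\mu q^2}(B)f=q^{-1}R_{\mu q}(B)Af\in\cD(A)$, so in fact $R_{\mu q^2}(B)f\in\cD(A^2)$. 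Substituting the second identity into the first then gives $R_\mu(B)A^2f=q^2A^2R_{\mu q^2}(B)f$, as claimed. The thing to watch is that each hypothesis be applied only to vectors already known to lie in $\cD(A)$, and that the intermediate vectors return to $\cD(A^2)$ so that the second factor of $A$ may legitimately be applied; this all reduces to the boundedness and global definition of the resolvents of $B$, and presents no genuine obstacle.

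Finally, Proposition~\ref{rldb}(vi), applied with $A^2$, $B$, and $q^2$ in the roles of $A$, $B$, and $q$, yields that $\cD_{q^2}(A^2,B)$ is a core for $B$. (The hypotheses of that proposition that are actually used in proving part~(vi) concern only the $B$-side, namely $\mu,\mu q^2\in\rho(B)$, which are among our assumptions; if one prefers to quote the proposition verbatim, a $\lambda$ with $\lambda,\lambda q^2\in\rho(A^2)$ exists because $\rho(A^2)\supseteq\C\setminus[0,\infty)$, so the set of such $\lambda$ that must be avoided is a union of two rays through the origin.) Equivalently, one may argue directly as in the proof of Proposition~\ref{rldb}(v): from $R_\mu(B)A^2\subseteq q^2A^2R_{\mu q^2}(B)$ one gets $R_{\mu q^2}(B)\cD(A^2)\subseteq\cD_{q^2}(A^2,B)\subseteq\cD(B)$, and since $B-\mu q^2I$ maps this subspace onto $\cD(A^2)$, which is dense in $\Hh$ because $A$ is self-adjoint, while $\mu q^2\in\rho(B)$, the subspace $\cD_{q^2}(A^2,B)$ is a core for $B$.
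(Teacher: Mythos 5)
Your proof is correct and follows essentially the same route as the paper: compose the two hypotheses to obtain $R_\mu(B)A^2\subseteq q^2A^2R_{\mu q^2}(B)$, the weak $B$-resolvent form of $A^2B=q^2BA^2$, and then invoke Proposition~\ref{rldb}(vi). The extra domain bookkeeping and the remark on which hypotheses of that proposition are actually needed are sound but not essential additions.
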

\begin{proof}
From the assumptions we derive $R_{\mu }(B)A^2\subseteq  qA R_{\mu q}(B)A \subseteq  q^2A^2 R_{\mu q^2}(B),$ that is, the weak $B$-resolvent form for the relation $A^2B=q^2BA^2$ is satisfied. Therefore, $\cD_{q^2}(A^2,B)$ is a core for $B$ by Proposition \ref{rldb}(vi). 
\end{proof}

The next proposition shows how the resolvent relations \eqref{resolvents1} and \eqref{resolvents1-b} follow from the essential self-adjointness of a certain symmetric operator. 

Let us fix $a$, $b \in \mathbb R$ and choose the branch of the square root  such that 
\begin{align}
 \overline q^{1/2} =\overline { q^{1/2}}\, .
 \end{align}
We define an operator $T$ with domain $\cD(T):= \cD_q(A,B)$ by
\begin{equation}\label{operator_T}
 Tf= \bar q^{1/2} (A-a  q^{1/2})(B-b  q^{1/2})f + \frac{\bar q^{1/2} - q^{1/2}}2  ~ab f,~f\in \cD(T).
\end{equation}
\begin{thl}
The operator $T$  is symmetric.
\end{thl}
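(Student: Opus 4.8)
The plan is to show directly that $\langle Tf, g\rangle = \langle f, Tg\rangle$ for all $f, g \in \cD(T) = \cD_q(A,B)$, using that $A$ and $B$ are self-adjoint and that $ABf = qBAf$ on this domain. First I would expand the quadratic expression. Writing $w := q^{1/2}$ (so $\bar w = \overline{q^{1/2}}$ and $w\bar w = 1$, $\bar w^2 = \bar q$), we have
\begin{equation*}
 Tf = \bar w\,(A - a w)(B - b w)f + \tfrac{\bar w - w}{2}\,ab\,f
    = \bar w AB f - a b\, \bar w w^2 f - b w \bar w\, Af - a w\bar w\, Bf + \tfrac{\bar w - w}{2}\,ab\, f,
\end{equation*}
which, using $w\bar w = 1$ and $\bar w w^2 = w$, simplifies to
\begin{equation*}
 Tf = \bar w\, ABf - b\, Af - a\, Bf - \bigl(w - \tfrac{\bar w - w}{2}\bigr) ab\, f
    = \bar w\, ABf - b\, Af - a\, Bf - \tfrac{w + \bar w}{2}\, ab\, f.
\end{equation*}
Since $a, b$ are real and $\tfrac{w+\bar w}{2} = \operatorname{Re} q^{1/2}$ is real, the terms $-b\,Af$, $-a\,Bf$, and $-\tfrac{w+\bar w}{2}ab\,f$ are manifestly symmetric on $\cD_q(A,B) \subseteq \cD(A)\cap\cD(B)$. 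So the whole question reduces to the symmetry of the single term $f \mapsto \bar w\, ABf$ on $\cD_q(A,B)$.

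Next I would check exactly that. For $f, g \in \cD_q(A,B)$ we have $f, g \in \cD(AB)\cap\cD(BA)$ with $ABf = qBAf$ and $ABg = qBAg$. Then, using self-adjointness of $A$ and $B$ to move operators across the inner product,
\begin{equation*}
 \langle \bar w\, ABf, g\rangle = \bar w \langle Bf, Ag\rangle = \bar w \langle f, BAg\rangle = \bar w \langle f, \bar q\, ABg\rangle = \bar w\, q\, \langle f, ABg\rangle = w\,\langle f, ABg\rangle,
\end{equation*}
where in the fourth step I used $BAg = \bar q\, ABg$ (equivalently relation \eqref{qplane2}, valid since $g \in \cD_q(A,B) = \cD_{\bar q}(B,A)$) and in the last step $\bar w q = \bar w\, w^2 \cdot \bar w^{?}$— more carefully, $\bar w q = \bar q^{1/2} q = q^{1/2} = w$ by the branch choice $\bar q^{1/2} = \overline{q^{1/2}}$ together with $|q|=1$. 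On the other hand $\langle f, \bar w\, ABg\rangle = w\,\langle f, ABg\rangle$ as well, since $\overline{\bar w} = w$. Hence $\langle \bar w\, ABf, g\rangle = \langle f, \bar w\, ABg\rangle$, so $f \mapsto \bar w\, ABf$ is symmetric, and combining with the three elementary symmetric terms above gives $\langle Tf, g\rangle = \langle f, Tg\rangle$. Finally I would note that $\cD(T) = \cD_q(A,B)$ is dense whenever one is in the situation where $T$ is of interest — though for mere symmetry (not essential self-adjointness) one only needs the hermitian form identity, so no density claim is required here beyond what "symmetric operator" conventionally presupposes.

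The only point that needs care — and the place where the branch convention $\bar q^{1/2} = \overline{q^{1/2}}$ is actually used — is the bookkeeping of the scalar $\bar w$ when conjugated and when multiplied by $q$: one must verify $\bar w q = w$ and $\overline{\bar w} = w$ and that the non-$AB$ terms genuinely have real coefficients (hence are self-adjoint on $\cD(A)\cap\cD(B)$). There is no analytic obstacle: everything takes place on the fixed domain $\cD_q(A,B)$, no limiting or core argument enters, and I do not even need the resolvent reformulations of Section~\ref{prelimrel1} — the $q$-commutation relation in its raw form $ABf = qBAf$ together with its equivalent $BAf = \bar q ABf$ is all that is invoked.
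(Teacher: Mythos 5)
Your proof is correct and follows essentially the same route as the paper: expand $T$ as $\bar q^{1/2}AB - bA - aB + c\,ab\,I$ with $c$ real, note the last three terms are manifestly symmetric, and use self-adjointness of $A,B$ together with $BAg=\bar q\,ABg$ on $\cD_q(A,B)$ to get $\langle \bar q^{1/2}ABf,g\rangle=\langle f,\bar q^{1/2}ABg\rangle$. One harmless slip: the constant term should come out as $+\tfrac{q^{1/2}+\bar q^{1/2}}{2}\,ab$ (you dropped the $+$ on the $ab\,\bar w w^2$ term in the expansion and then compensated inconsistently), but since the coefficient is real either way, the symmetry argument is unaffected.
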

\begin{proof}
Clearly, $Tf=\bigl( \bar q^{1/2} AB - b A -a B + \frac{q^{1/2} +\bar q^{1/2}}2\, ab\bigr) f$. Using this formula 
 we derive
\begin{align*}
\langle Tf,g\rangle & =
\langle \bigl( \bar q^{1/2} AB - b A -a B + \frac{q^{1/2} +\bar q^{1/2}}2 \, ab\, \bigr)f, g \rangle\\
&= \langle f,\bigl(  q^{1/2} BA - b A -a B +  \frac{q^{1/2} + \bar q^{1/2}}2\, ab\, \bigr) g\rangle
\\
&=\langle f, \bigl( \bar q^{1/2} AB - b A -a B +  \frac{q^{1/2} +\bar q^{1/2}}2\, ab\, \bigr) g\rangle =\langle Tf, g\rangle
\end{align*}
for $f,g\in \cD(T)$, that is, $T$ is symmetric.
\end{proof}

\begin{thp}
 Assume that $ab\ne0$ and $q^2\ne1$. If the operator $T$ is essentially self-adjoint, then  both resolvent relations \eqref{resolvents1} and \eqref{resolvents1-b} hold on $\Hh$ for $\lambda=a\bar q^{1/2}$,  $\mu=b\bar q^{1/2}$ and the operator $R_{b  q^{1/2}}(B) R_{a  q^{1/2} }(A)$ is normal.
\end{thp}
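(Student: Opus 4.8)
The plan is to exploit the factored form of $T$ together with the identity $T = \bar q^{1/2}AB - bA - aB + \tfrac{q^{1/2}+\bar q^{1/2}}{2}ab$ noted in the previous lemma, and to turn the essential self-adjointness of $T$ into statements about resolvents. First I would observe that, by the first display after Definition~\ref{operator_T} in the lemma's proof, $T$ agrees on $\cD_q(A,B)$ with the operator $\bar q^{1/2}(A-aq^{1/2})(B-bq^{1/2}) + \tfrac{\bar q^{1/2}-q^{1/2}}{2}ab$; set $\lambda_0 = aq^{1/2}$, $\mu_0 = bq^{1/2}$, so that $\lambda = \bar q \lambda_0 = a\bar q^{1/2}$ and $\mu = \bar q\mu_0 = b\bar q^{1/2}$, i.e. $\lambda q = \lambda_0$ and $\mu q = \mu_0$. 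The point is that on $\cD_q(A,B)$ one has, essentially by construction, $(A - \lambda_0 I)(B-\mu_0 I)f = q^{1/2}(T - cI)f$ for the appropriate constant $c = \tfrac{q^{1/2}-\bar q^{1/2}}{2}ab$ (a purely imaginary number since $q$ has modulus one), and similarly, using $ABf = \bar q BAf$ on $\cD_q$, one gets a companion identity $(B - \mu_0 I)(A - \lambda_0 I)f$ expressed through $T$ as well — this is where $q^2 \ne 1$ enters to keep the two factorizations genuinely different.

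Next I would use the hypothesis: $\bar T := T^{**}$ is self-adjoint, so $c \notin \sigma(\bar T)$ (as $c$ is non-real and $\bar T$ self-adjoint), hence $(\bar T - cI)^{-1}$ is a bounded everywhere-defined operator. Combining this with the two factorization identities above, and noting that $R_{\mu_0}(B)R_{\lambda_0}(A)$ and $R_{\lambda_0}(A)R_{\mu_0}(B)$ are bounded operators whose product with $(A-\lambda_0 I)(B-\mu_0 I)$ (resp.\ $(B-\mu_0 I)(A-\lambda_0 I)$) is the identity on a suitable dense domain, I would identify
\[
 R_{\mu_0}(B)R_{\lambda_0}(A) = \bar q^{1/2}(\bar T - cI)^{-1}, \qquad R_{\lambda_0}(A)R_{\mu_0}(B) = \bar q^{1/2}\bigl((\bar T - cI)^{-1}\bigr)^{\!*}\cdot(\text{correction}),
\]
or more precisely show that both $R_{\lambda_0}(A)R_{\mu_0}(B)$ and $R_{\mu_0}(B)R_{\lambda_0}(A)$ are rational (fractional-linear) functions of the single self-adjoint operator $\bar T$. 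Since any two bounded functions of one self-adjoint operator commute and the map $S \mapsto S^*$ corresponds to complex conjugation of the function, normality of $R_{b q^{1/2}}(B)R_{a q^{1/2}}(A) = R_{\mu_0}(B)R_{\lambda_0}(A)$ is then immediate, and the two resolvent relations \eqref{resolvents1}, \eqref{resolvents1-b} reduce to the algebraic identities displayed just before Proposition~\ref{equivalenceweakstrong} (the two factored forms with the $-\tfrac{1}{\lambda^2\mu^2 q(q-1)^2}I$ right-hand side), which become scalar identities for the corresponding fractional-linear functions of $\bar T$ and can be checked by a direct computation.

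The main obstacle I anticipate is the bookkeeping of domains: one must check that $\cD_q(A,B)$ is actually a core not just for $T$ but in the right sense for each factor $(A-\lambda_0 I)$, $(B-\mu_0 I)$, so that the formal inversions $R_{\mu_0}(B)R_{\lambda_0}(A)(A-\lambda_0 I)(B-\mu_0 I) = I$ on $\cD_q(A,B)$ extend to the claimed operator identity on all of $\Hh$ — this is exactly where one needs $(B-\mu_0 I)(A - \lambda_0 I)\cD_q(A,B)$ and $(A-\lambda_0 I)(B-\mu_0 I)\cD_q(A,B)$ to be dense, which should follow from the essential self-adjointness of $T$ via $\overline{(\bar T - cI)\cD_q(A,B)} = \Hh$ together with the fact that $R_{\lambda_0}(A)$, $R_{\mu_0}(B)$ have dense range. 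Once the density/core issues are settled, the rest is the routine fractional-linear-function calculation indicated above, and the passage from the resolvent relations on the dense subspace to all of $\Hh$ is automatic by boundedness. It may also be worth invoking Proposition~\ref{equivalenceweakstrong} at the end to phrase the conclusion, if convenient, in terms of the weak resolvent forms, though the statement as given only asks for \eqref{resolvents1} and \eqref{resolvents1-b} directly.
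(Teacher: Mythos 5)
Your proposal is correct in substance and shares the paper's decisive first step: the two factorizations $\bar q^{1/2}(A-aq^{1/2})(B-bq^{1/2})=T-\tau I$ and $q^{1/2}(B-b\bar q^{1/2})(A-a\bar q^{1/2})=T+\tau I$ with $\tau=\tfrac{\bar q^{1/2}-q^{1/2}}{2}\,ab$ purely imaginary and nonzero, so that essential self-adjointness of $T$ makes both ranges $(T\mp\tau I)\cD(T)$ dense. The endgame, however, is genuinely different. The paper feeds these dense ranges into Propositions \ref{prelimn}(i),(iii) and \ref{prelimn-b}(i),(iii) to get \eqref{resolvents} and \eqref{resolvents-b} on dense subspaces, extends by boundedness, and obtains normality by comparing the two resulting identities. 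You instead invert the factorizations to identify $R_{bq^{1/2}}(B)R_{aq^{1/2}}(A)=\bar q^{1/2}(\bar T-\tau I)^{-1}$ and $R_{a\bar q^{1/2}}(A)R_{b\bar q^{1/2}}(B)=q^{1/2}(\bar T+\tau I)^{-1}$ as fractional-linear functions of the single self-adjoint operator $\bar T$; normality is then automatic, and \eqref{resolvents1}, \eqref{resolvents1-b} collapse to the one-line rational identity $q^{1/2}\bigl((\bar T+\tau I)^{-1}-(\bar T-\tau I)^{-1}\bigr)=-2\tau q^{1/2}(\bar T-\tau I)^{-1}(\bar T+\tau I)^{-1}$ together with $\mu\lambda q=ab$ and $-2\tau q^{1/2}=ab(q-1)$. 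Your route buys explicit formulas and avoids invoking the earlier propositions, at the price of justifying the inversion $R_{\mu_0}(B)R_{\lambda_0}(A)(A-\lambda_0 I)(B-\mu_0 I)f=f$ on $\cD_q(A,B)$ — which you correctly flag and which does hold, since $f\in\cD(AB)\cap\cD(BA)$ gives $(B-\mu_0 I)f\in\cD(A)$. Three small slips to correct, none of which damages the argument: the relation on $\cD_q(A,B)$ is $BAf=\bar q ABf$, not $ABf=\bar q BAf$; your constant $c$ equals $-\tau$, so the first factorization reads $(A-\lambda_0 I)(B-\mu_0 I)f=q^{1/2}(T+cI)f$; and the adjoint of $R_{\mu_0}(B)R_{\lambda_0}(A)$ is $R_{a\bar q^{1/2}}(A)R_{b\bar q^{1/2}}(B)$ — the operator that actually occurs in \eqref{resolvents1} — not $R_{\lambda_0}(A)R_{\mu_0}(B)$.
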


\begin{proof}
Setting
\[
 \tau = \frac{\bar q^{1/2} - q^{1/2}}2\, ab,
\]
the operator $T$ can be rewritten as
\[
 Tf= \bar q^{1/2} (A-a  q^{1/2})(B-b  q^{1/2})f + \tau  f = q^{1/2} (B-b \bar q^{1/2})(A-a \bar q^{1/2})f - \tau  f
\]
for $f\in \cD(T)$.
Therefore, since\, $T$\, is essentially self-adjoint and $\tau $ is purely imaginary and nonzero (by the assumptions $ab\neq 0$ and $q^2\neq 1$), the set 
\[
\cF_0:=(T-\tau I)\cD(T)= (A-a  q^{1/2} )(B-b q^{1/2})\cD(T)
\]
is dense in $\Hh$. By Proposition \ref{prelimn-b},(i) and (iii), equation (\ref{resolvents-b}) is satisfied for $\lambda=a  \bar q^{1/2}$, $\mu= b \bar q^{1/2}$ and all vectors $h\in \cF_0$. Since\, $\cF_0$\, is dense and all resolvent operators are bounded,  equation (\ref{resolvents-b}) holds for all $h\in \Hh$. That is, we have  
\begin{align}\label{resolone}
 R_{a \bar q^{1/2}}(A)R_{b \bar q^{1/2}}(B)&= qR_{b q^{1/2}}(B)R_{a q^{1/2}}(A) \notag
\\
&\quad{}+ 
{ab(q-1)} R_{a \bar q^{1/2}}(A)R_{b \bar q^{1/2}}(B)R_{b q^{1/2}}(B)R_{a q^{1/2}}(A).
\end{align}
Thus, the  $(B,A)$-resolvent relation \eqref{resolvents1-b} is satisfied.

Similarly, 
we conclude that 
\[
\cF_1:=(T+\tau I)\cD(T)= (B-b \bar q^{1/2})(A-a  \bar q^{1/2}) \cD(T)
\]
is dense in $H$ and equation (\ref{resolvents}) holds for  $\lambda=a  \bar q^{1/2}$, $\mu= b \bar q^{1/2}$ and  $h\in\cF_1$ by Proposition \ref{prelimn},(i) and (iii), and hence for all vectors $h\in \Hh$. That is, the  $(A,B)$-resolvent relation \eqref{resolvents1} is valid and we have 
\begin{align}\label{resoltwo}
 R_{a \bar q^{1/2}}(A)R_{b \bar q^{1/2}}(B)&= qR_{b q^{1/2}}(B)R_{a q^{1/2}}(A) \notag
\\
&\quad{}+ 
{ab(q-1)} R_{b q^{1/2}}(B)R_{a q^{1/2}}(A)R_{a \bar q^{1/2}}(A)R_{b \bar q^{1/2}}(B).
\end{align}
Comparing (\ref{resolone}) and (\ref{resoltwo}) we conclude that $$ R_{a \bar q^{1/2}}(A)R_{b \bar q^{1/2}}(B)R_{b q^{1/2}}(B)R_{a q^{1/2}}(A)=R_{b q^{1/2}}(B)R_{a q^{1/2}}(A)R_{a \bar q^{1/2}}(A)R_{b \bar q^{1/2}}(B)$$ which means that the operator $R_{b  q^{1/2}}(B) R_{a  q^{1/2} }(A)$ is normal.
\end{proof}

\section{Operator-theoretic preliminaries}\label{operatorpreliminaries}

We denote by $P=\ii \frac{d}{dx}$  the momentum operator and by $Q=x$ the position operator acting on the Hilbert space $L^2(\R)$ with respect to the Lebesgue measure on $\R$. Fix $\beta >0$.

\begin{thl}\label{ebatepcondition}
$(i)$ Suppose that $f(z)$ is a holomorphic function on the strip $\cI_\beta:=\{z\in \C: 0 < {\rm Im} z< \beta\}$ such that
\begin{align}\label{ebatepcondition1}
\sup\limits_{0<y<\beta}~ \int_{-\infty}^{+\infty} |f(x+\ii y)|^2 ~dx <\infty.
\end{align}
Set $f_y(x):=f(x+\ii y)$. Then the limits 
$f_0:=\lim_{y \downarrow 0} f_y(x)$ and $f_\beta:=\lim_{y \uparrow \beta} f_y(x)$ exist in $L^2(\R)$ and we have $f_0 \in \cD(e^{\beta P})$ and $e^{\beta P}f_0=f_\beta$. 

$(ii)$ For each function $f_0 \in \cD(e^{\beta P})$ there exists a unique function $f$ as in (i) such that $f_0:=\lim_{y \downarrow 0} f_y(x)$ in $L^2(\R)$  and $e^{\beta P}f_0=f_\beta$. 
\end{thl}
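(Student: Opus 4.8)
The plan is to identify $e^{\beta P}$ explicitly as a multiplication operator on the Fourier side and then translate the strip-boundary-value problem into a standard Paley--Wiener type statement. First I would recall that the Fourier transform $\mathcal F$ conjugates $P = \ii\frac{d}{dx}$ into multiplication by $-p$ (with the convention making $P$ self-adjoint), so that $e^{\beta P} = \mathcal F^{-1} M_{e^{-\beta p}} \mathcal F$ as an unbounded self-adjoint operator, with
\[
\cD(e^{\beta P}) = \Bigl\{ g \in L^2(\R) : \int_{-\infty}^{\infty} e^{-2\beta p}\, |\hat g(p)|^2\, dp < \infty \Bigr\},
\]
where $\hat g = \mathcal F g$. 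For part (i), given $f$ holomorphic on $\cI_\beta$ with the uniform bound \eqref{ebatepcondition1}, the slices $f_y$ form a bounded family in $L^2(\R)$; I would apply the classical Paley--Wiener theorem for a strip, which says that such $f$ is precisely the Poisson-type extension of an $L^2$ boundary function $f_0$ whose Fourier transform satisfies $\widehat{f_y}(p) = e^{-yp}\,\hat f_0(p)$ for all $0<y<\beta$, and that the uniform $L^2$ bound forces $e^{-yp}\hat f_0(p) \in L^2$ uniformly in $y\in[0,\beta]$, in particular $e^{-\beta p}\hat f_0 \in L^2(\R)$. Taking $y\downarrow 0$ and $y\uparrow\beta$ in the $L^2$ sense gives the existence of $f_0$ and $f_\beta$ with $\hat f_\beta(p) = e^{-\beta p}\hat f_0(p)$; comparing with the description of $\cD(e^{\beta P})$ above yields $f_0\in\cD(e^{\beta P})$ and $e^{\beta P} f_0 = f_\beta$. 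For part (ii), given $f_0 \in \cD(e^{\beta P})$, i.e. $e^{-\beta p}\hat f_0 \in L^2$, I would simply define $f(z)$ via the inverse Fourier integral $f(x+\ii y) = \frac{1}{\sqrt{2\pi}}\int e^{\ii(x+\ii y)p}\hat f_0(p)\,dp$; the factor $e^{-yp}$ is bounded by $\max(1, e^{-\beta p})$ pointwise for $y\in(0,\beta)$, hence dominated by an $L^2\cap$(a.e.-finite) envelope, so the integral converges, defines a holomorphic function on $\cI_\beta$ by Morera/differentiation under the integral sign, and its slices have $L^2$ norm $\|e^{-yp}\hat f_0\|_{L^2}$, which is bounded on $(0,\beta)$ by log-convexity of $y\mapsto\|e^{-yp}\hat f_0\|_{L^2}^2$ between its finite endpoint values. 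The boundary limit $f_0 = \lim_{y\downarrow 0} f_y$ in $L^2$ and $e^{\beta P}f_0 = f_\beta$ follow as in (i), and uniqueness follows because two such functions have the same $L^2$ boundary values on $\{\,\mathrm{Im}\,z=0\}$ and a holomorphic function on a strip vanishing on an edge (in the $L^2$-trace sense) vanishes identically.

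The main obstacle I anticipate is making the passage between the hypothesis \eqref{ebatepcondition1} and the Fourier-side description fully rigorous: one must show that a holomorphic $f$ with only the \emph{uniform} $L^2$ bound on slices genuinely has the representation $\widehat{f_y} = e^{-yp}\hat f_0$, rather than merely being an $L^2$-limit of slices with no control on the Fourier transform. The clean way is to translate to the unit disk or half-plane and invoke the Hardy-space theory of the strip (equivalently, to note that $e^{-\ii\beta z}$ maps $\cI_\beta$ to the upper half-plane and reduce to $H^2$ of a half-plane), or alternatively to prove directly that for $0<y_1<y_2<\beta$ the Cauchy integral theorem gives $\widehat{f_{y_2}}(p) = e^{-(y_2-y_1)p}\widehat{f_{y_1}}(p)$ by shifting the contour of integration in $\int f(x+\ii y)e^{-\ii x p}\,dx$ and controlling the vertical pieces at $\pm\infty$ using \eqref{ebatepcondition1} together with a mean-value argument (the uniform bound forces $f(x+\ii y)\to 0$ along a sequence $x\to\pm\infty$ for a.e. $y$). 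Once this exponential interpolation identity for the Fourier transforms of the slices is in hand, everything else is routine; I would present that contour-shift lemma as the technical heart of the argument and keep the rest brief.
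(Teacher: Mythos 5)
The paper offers no proof of this lemma at all---it simply cites \cite[Lemma 1.1]{S1994}---and your argument is a correct, self-contained version of the standard Paley--Wiener proof used there: diagonalize $e^{\beta P}$ as multiplication by $e^{-\beta p}$ on the Fourier side and establish the interpolation identity $\widehat{f_{y_2}}(p)=e^{-(y_2-y_1)p}\widehat{f_{y_1}}(p)$ for interior slices by shifting the contour. You correctly isolate that contour-shift step (with Fubini/mean-value control of the vertical segments) as the only nontrivial point; the remaining steps---Fatou/monotone convergence to reach the endpoints $y=0,\beta$, dominated convergence for the $L^2$ limits, and uniqueness via $\widehat{g_y}=e^{-yp}\widehat{g_0}$ applied to the difference of two candidates---are routine exactly as you indicate.
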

\begin{proof}
\cite[Lemma 1.1]{S1994}.
\end{proof}
If $f$ is a function as in Lemma \ref{ebatepcondition}(i), we  write simply $f(x)$ for $f_0(x)$ and $f(x+\ii \beta)$ for $f_\beta (x)$. Then the  operator $e^{\beta P}$ acts by 
\begin{align}\label{actionb}
(e^{\beta P})(x)=f(x+ \ii \beta),~~f \in  \cD(e^{\beta P}).
\end{align}

For a nonzero complex number $\sf{q}$ we denote by $\cS({\sf{q}})^+$  the closed sector in the  plane with opening angle less than $\pi$ between the positive $x$-axis and the half-line through  the origin and $\overline{\sf{q}}$ and  set $\cS({\sf{q}}):= \cS({\sf{q}})^+\cup(-\cS({\sf{q}})^+).$

\begin{figure}[h]
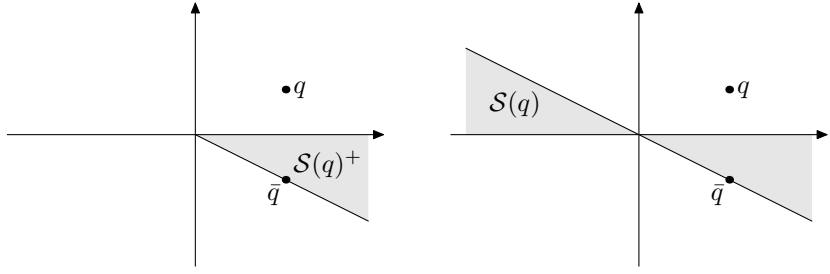

\unitlength=0.2ex
\hfil 
\includegraphics{001.mps} \qquad\includegraphics{002.mps}
\hfil
\caption{The sectors $\mathcal S(q)^+$ and $\mathcal S(q)$}
\end{figure}

We fix two reals $\alpha, \beta$ such that  $\beta>0$ and $0<|\alpha \beta|<\pi$.   Put ${\sf{q}}:=e^{-\ii \alpha \beta}$. 
Now we define positive selfadjoint operators $A$ and $B$ on the Hilbert space $L^2(\R)$ by
\begin{align}
{A}:=e^{\alpha Q}~~~~{\rm and}~~~~{B}:=e^{\beta P}.
\end{align}
\begin{thc}\label{sfabcomm}
If $f\in \cD(BA)\cap \cD(B)$, then $f\in \cD(AB)$ and $ABf={\sf{q}} BAf$.
\end{thc}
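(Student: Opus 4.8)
The plan is to verify the intertwining relation $e^{\alpha Q}e^{\beta P}f = {\sf q}\, e^{\beta P}e^{\alpha Q}f$ directly from the concrete action of $B=e^{\beta P}$ on the strip described in Lemma~\ref{ebatepcondition}. First I would take $f\in\cD(BA)\cap\cD(B)$ and unwind what the two hypotheses mean analytically. By Lemma~\ref{ebatepcondition}, $f\in\cD(e^{\beta P})$ means $f$ extends to a holomorphic function on the strip $\cI_\beta$ with uniformly bounded $L^2$-norms on horizontal lines, and then $(Bf)(x)=f(x+\ii\beta)$. The condition $f\in\cD(BA)$ says that $Af=e^{\alpha Q}f$, i.e.\ the function $x\mapsto e^{\alpha x}f(x)$, also lies in $\cD(B)$; so $e^{\alpha z}f(z)$ extends holomorphically to the strip with bounded line-$L^2$-norms, and $(BAf)(x)=e^{\alpha(x+\ii\beta)}f(x+\ii\beta)=e^{\ii\alpha\beta}e^{\alpha x}(Bf)(x)$, where I have used $e^{\alpha(x+\ii\beta)}=e^{\ii\alpha\beta}e^{\alpha x}$.

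The key computation is then almost immediate: $e^{\alpha x}(Bf)(x)=\overline{{\sf q}}\,(BAf)(x)={\sf q}^{-1}(BAf)(x)$ since ${\sf q}=e^{-\ii\alpha\beta}$. This exhibits the function $g(x):=e^{\alpha x}f(x)=(Af)(x)$ — which a priori we only know lies in $\cD(B)$ — as being, after multiplication by the bounded function nothing, precisely such that $Bg = {\sf q}^{-1}BAf$, i.e.\ $B(Af)={\sf q}^{-1}BAf$. But $B(Af)=BAf$ is exactly $\cD(BA)$'s output, so I need to be slightly more careful: what I actually want is $ABf={\sf q}BAf$, so I should run the argument with the roles arranged to produce $A$ acting on $Bf$. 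Concretely: I must show $Bf\in\cD(A)=\cD(e^{\alpha Q})$, i.e.\ that $x\mapsto e^{\alpha x}(Bf)(x)=e^{\alpha x}f(x+\ii\beta)$ is in $L^2(\R)$, and that $A(Bf)={\sf q}BAf$. Now $e^{\alpha x}f(x+\ii\beta)=e^{-\ii\alpha\beta}e^{\alpha(x+\ii\beta)}f(x+\ii\beta)={\sf q}\,(e^{\alpha\,\cdot}f)(x+\ii\beta)$, and the function $e^{\alpha\,\cdot}f=Af$ lies in $\cD(B)$ by the hypothesis $f\in\cD(BA)$, so $(e^{\alpha\,\cdot}f)(x+\ii\beta)=(B(Af))(x)=(BAf)(x)\in L^2(\R)$. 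Hence $e^{\alpha\,\cdot}(Bf)={\sf q}\,BAf\in L^2(\R)$, which simultaneously shows $Bf\in\cD(A)$, hence $f\in\cD(AB)$, and that $ABf=A(Bf)={\sf q}\,BAf$.

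I expect the only real subtlety — the ``main obstacle'' — to be the bookkeeping with the holomorphic extensions: one must make sure that the function whose boundary value on $\R$ is $e^{\alpha x}f(x)$ is genuinely the same holomorphic function as $e^{\alpha z}f(z)$ (so that its boundary value on the line $\mathrm{Im}\,z=\beta$ is the one used above), and that the characterization in Lemma~\ref{ebatepcondition}(i) applies because $e^{\alpha z}f(z)$ inherits from $f$ the holomorphy on $\cI_\beta$ while the $L^2$-bound \eqref{ebatepcondition1} for $e^{\alpha z}f(z)$ is precisely the content of ``$Af\in\cD(B)$''. Once this identification is in place, the identity $e^{\alpha(x+\ii\beta)}={\sf q}^{-1}e^{\alpha x}$ does all the work. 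I would write the argument so that the two displayed equalities $e^{\alpha x}f(x+\ii\beta)={\sf q}\,(Af)(x+\ii\beta)$ and $(Af)(x+\ii\beta)=(BAf)(x)$ are stated explicitly, from which $f\in\cD(AB)$ and $ABf={\sf q}BAf$ follow in one line.
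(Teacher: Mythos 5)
Your argument is correct and is essentially the paper's own proof: both apply the action $(Bg)(x)=g(x+\ii\beta)$ from Lemma~\ref{ebatepcondition} to $g=Af=e^{\alpha\,\cdot}f\in\cD(B)$ and use $e^{\alpha(x+\ii\beta)}=e^{\ii\alpha\beta}e^{\alpha x}$ to identify $e^{\alpha x}f(x+\ii\beta)$ with ${\sf q}(BAf)(x)$, which simultaneously gives $Bf\in\cD(A)$ and the relation; your extra care about why $Bf\in\cD(A)$ and about the uniqueness of the holomorphic extension of $e^{\alpha z}f(z)$ is a welcome (and correct) elaboration of a point the paper leaves implicit. Note only that the factor $\overline{\sf q}={\sf q}^{-1}$ in your first ``key computation'' sentence should be ${\sf q}$; the superseding displayed computation that you actually rely on has the correct factor.
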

\begin{proof} Using the description of the domain $\cD(B)=\cD(e^{\beta P})$ in Lemma \ref{ebatepcondition} and formula (\ref{actionb}) we derive
\[
({\sf{q}}BAf)(x)= q B(e^{\alpha x}f(x))= e^{-\alpha \ii \beta } e^{\alpha(x+\ii \beta )} f(x+\ii\beta)=  e^{\alpha x} f(x+\ii\beta)=(ABf)(x).
\qedhere
\]
\end{proof}
Clearly,  the linear space 
$$ \cD_0={\rm Lin}~\{e^{-\varepsilon x^2+\gamma x}; \varepsilon >0, \gamma\in \C\}
 $$ 
 is contained in $\cD(A)\cap \cD(B)$ and 
 it is invariant under $A$ and $B$ and also under the Fourier transform and its inverse. By Corollary \ref{sfabcomm}, we have  $\cD_0\subseteq \cD_{\sf{q}}(A,B)$. As noted in \cite{S1994}, $\cD_0$ is a core for both selfadjoint operators $A$ and $B$. 

\begin{thp}\label{weaksanserifrel}
Suppose that $\lambda \in \C\backslash \cS({\sf{q}})^+$. Then\, $\lambda$ and $ \lambda {\sf{q}}$ are in $\rho(A)$ and  $${\sf{q}} R_{\lambda 
{\sf{q}}}(A)B\subseteq BR_\lambda(A).$$
\end{thp}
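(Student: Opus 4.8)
The plan is to work directly with the concrete form of the two operators on $L^2(\R)$: the resolvent $R_\lambda(A)$ is multiplication by $(e^{\alpha x}-\lambda)^{-1}$, while $B=e^{\beta P}$ acts by $(Bf)(x)=f(x+\ii\beta)$ on the holomorphic extension of $f$ to the strip $\cI_\beta$ furnished by Lemma \ref{ebatepcondition}. The geometric fact to establish first is that, writing $z=x+\ii y$, as $z$ ranges over the closed strip $0\le y\le\beta$ the value $e^{\alpha z}=e^{\alpha x}e^{\ii\alpha y}$ has modulus $e^{\alpha x}$ running over $(0,\infty)$ and argument $\alpha y$ running over the interval between $0$ and $\alpha\beta$; since $|\alpha\beta|<\pi$ this interval does not wrap around, so
\[
\{\,e^{\alpha z}:0\le{\rm Im}\,z\le\beta\,\}=\cS({\sf q})^+\setminus\{0\},
\]
whose closure is $\cS({\sf q})^+$. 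As $[0,\infty)$ and the ray through $\ov{\sf q}$ both lie in $\cS({\sf q})^+$ and $\sigma(A)=[0,\infty)$, this at once gives $\lambda\in\rho(A)$ and $\lambda{\sf q}\in\rho(A)$ for $\lambda\notin\cS({\sf q})^+$, and also that $\delta:={\rm dist}(\lambda,\cS({\sf q})^+)>0$.

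Next I would take $f\in\cD(B)=\cD(e^{\beta P})$ with holomorphic representative $f(z)$ on $\cI_\beta$ (Lemma \ref{ebatepcondition}(ii)) and set $g(z):=f(z)\,(e^{\alpha z}-\lambda)^{-1}$. On the closed strip $e^{\alpha z}-\lambda$ is zero-free with $|e^{\alpha z}-\lambda|\ge\delta$, hence $g$ is holomorphic on $\cI_\beta$ and
\[
\sup_{0<y<\beta}\int_{-\infty}^{+\infty}|g(x+\ii y)|^2\,dx\ \le\ \delta^{-2}\sup_{0<y<\beta}\int_{-\infty}^{+\infty}|f(x+\ii y)|^2\,dx\ <\ \infty,
\]
the last quantity being finite precisely because $f\in\cD(e^{\beta P})$. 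Lemma \ref{ebatepcondition}(i) then yields $g_0:=\lim_{y\downarrow0}g_y\in\cD(e^{\beta P})$ and $e^{\beta P}g_0=g_\beta:=\lim_{y\uparrow\beta}g_y$, both limits in $L^2(\R)$.

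It remains to identify these two boundary values. The multipliers $(e^{\alpha(\cdot+\ii y)}-\lambda)^{-1}$ are bounded by $\delta^{-1}$ uniformly in $y\in(0,\beta)$ and converge pointwise as $y\downarrow0$, resp.\ $y\uparrow\beta$; splitting $g_y$ minus its candidate limit into one piece controlled by $\delta^{-1}\|f_y-(\text{limit of }f_y)\|_2$ and one piece handled by dominated convergence shows $g_0=f_0\,(e^{\alpha\cdot}-\lambda)^{-1}=R_\lambda(A)f$ and $g_\beta=f_\beta\,(e^{\alpha(\cdot+\ii\beta)}-\lambda)^{-1}$. Using $e^{\ii\alpha\beta}=\ov{\sf q}={\sf q}^{-1}$ together with $f_\beta=Bf$, the latter equals ${\sf q}\,(e^{\alpha\cdot}-\lambda{\sf q})^{-1}Bf={\sf q}\,R_{\lambda{\sf q}}(A)Bf$. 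Hence $R_\lambda(A)f=g_0\in\cD(B)$ and $BR_\lambda(A)f=e^{\beta P}g_0=g_\beta={\sf q}\,R_{\lambda{\sf q}}(A)Bf$ for every $f\in\cD(B)$, which is exactly the asserted inclusion ${\sf q}\,R_{\lambda{\sf q}}(A)B\subseteq BR_\lambda(A)$.

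The two substantive points are the sector identity in the first paragraph — elementary, but it is where the hypotheses $\beta>0$, $0<|\alpha\beta|<\pi$, and $\lambda\notin\cS({\sf q})^+$ all enter, and where $\rho(A)$ is pinned down — and the verification that the $L^2$-boundary values of the product $g=f\cdot(e^{\alpha z}-\lambda)^{-1}$ factor as products of boundary values; I expect the latter to be the only step needing genuine care (via the uniform bound $\delta^{-1}$ and dominated convergence), everything else being bookkeeping with Lemma \ref{ebatepcondition}.
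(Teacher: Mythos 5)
Your proposal is correct and follows essentially the same route as the paper: both rest on the observation that $e^{\alpha z}$ maps the closed strip onto the sector $\cS({\sf q})^+$, so that $|e^{\alpha z}-\lambda|$ is bounded below by ${\rm dist}(\lambda,\cS({\sf q})^+)>0$, and both then apply Lemma \ref{ebatepcondition} to $g(z)=(e^{\alpha z}-\lambda)^{-1}f(z)$ and read off the boundary value at ${\rm Im}\,z=\beta$. The only difference is that you spell out the identification of the $L^2$-boundary values of the product via dominated convergence, which the paper's proof leaves implicit.
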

\begin{proof}
If $z$ runs through the strip $\{z: 0\le {\rm Im}~ z\le \beta\}$,  
then the number $e^{\alpha z}$ fills the sector $\cS({\sf{q}})^+$. 
Hence the infimum of 
the function\, 
$|e^{\alpha z}-\lambda |$ on the strip $\cI_\beta $ is equal to the distance of $\lambda$ from $\cS({\sf{q}})^+$. In particular, this infimum is  positive, since $\lambda \notin \cS({\sf{q}})^+$.

Let $f\in \cD(B)$ and let $f(z)$ be the corresponding holomorphic function from Lemma \ref{ebatepcondition}. Since $|e^{\alpha z}-\lambda |$ has a  positive infimum  on the strip $\cI_\beta $,   the function  $g(z)=(e^{\alpha z}-\lambda )^{-1}f(z)$ is  holomorphic on  $\cI_\beta$ and it satisfies condition  (\ref{ebatepcondition1}) as well, because $f$ does. Therefore, from Lemma \ref{ebatepcondition} we conclude that $g\in \cD(B)$ and 
\begin{align*} ({B} R_{\lambda }({A})f)(x) &= ({B} g)(x)=g(x+\ii \beta )=(e^{\alpha (x+\ii \beta )}-\lambda )^{-1}f(x+\ii \beta)\\&= 
{\sf{q}}(e^{\alpha x}-\lambda{ \sf{q}})^{-1}f(x+\ii \beta)=({\sf{q}} R_{\lambda \sf{q}} ({A}){B}f)(x).
\qedhere
\end{align*}
  \end{proof}

Another technical ingredient used below is Balakrishnan's theory of fractional powers of nonnegative operators on Banach spaces \cite{bal}, see e.g. \cite{CS}. 
 
 Suppose that $T$ is  a closed linear operator on a Banach space such that 
\begin{align}\label{asslemmafracpre}
(-\infty,0)\subseteq \rho(T)\quad {\rm and}\quad {\rm sup}~\{||\lambda (T+\lambda I)^{-1}||:  \lambda >0 \}<\infty.
\end{align}
Then, for any  $\gamma\in \C$,  $0<{\rm Re}\, \gamma<1$, the Balakrishnan operator $J^\gamma$ (see \cite{CS}, p. 57) is defined by  
\begin{align}\label{imaginarypowerforpre}
J^{\gamma}f =\frac{\sin (\varepsilon+\ii t)\pi}{\pi } \int_0^\infty \lambda^{\gamma- 1}
(T+\lambda I)^{-1}Tf~d\lambda,~~~\quad f\in \cD(J^\gamma):=\cD(T).
\end{align}
Here the integral is meant as an improper Riemann integral of a continuous function on $(0,+\infty)$ with values in the underlying Banach space.
The operator $J^\gamma$ (or its closure) is considered as a  power of the operator $T$ with exponent $\gamma$. 

For our investigations the following  special case is sufficient.
 \begin{thp}\label{imaginarypower}
Suppose that $A$ is a positive self-adjoint  operator on a Hilbert space $\Hh$ such that ${\rm ker}\, A=\{0\}$ and let $\vartheta\in \R$, $|\vartheta|<\pi$. Let $T$ denote the normal operator $e^{\ii \vartheta} A$ in $\Hh$. Then, for any $0<\varepsilon<1$, $t\in \R$ and $f\in \cD(T)=\cD(A)$ we  have
\begin{align}\label{imaginarypowerfor}
T^{\varepsilon+ \ii t}f =e^{\ii \vartheta \varepsilon} e^{-\vartheta t}A^{\varepsilon+ \ii t}f =\frac{\sin (\varepsilon+\ii t)\pi}{\pi } \int_0^\infty \lambda^{\varepsilon+\ii t- 1}
(T+\lambda I)^{-1}Tf~d\lambda , 
\end{align}
where the operators $T^{\varepsilon+ \ii t}$ and $A^{\varepsilon+ \ii t}$ are defined by the spectral functional calculus.
\end{thp}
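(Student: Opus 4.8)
The plan is to verify the integral formula \eqref{imaginarypowerfor} by reducing everything to the functional calculus of the single positive self-adjoint operator $A$, and then to evaluate the resulting scalar integral by a standard beta-type integral. Since $T=e^{\ii\vartheta}A$ is normal with spectrum contained in the ray $\{e^{\ii\vartheta}s : s\ge 0\}$, and $|\vartheta|<\pi$ guarantees that the negative real axis lies in $\rho(T)$ with the resolvent bound in \eqref{asslemmafracpre} (indeed $\|\lambda(T+\lambda I)^{-1}\|=\sup_{s\ge0}\lambda|e^{\ii\vartheta}s+\lambda|^{-1}\le (\sin(\pi-|\vartheta|))^{-1}$ or simply $1$ when $|\vartheta|\le\pi/2$), the Balakrishnan operator $J^{\varepsilon+\ii t}$ from \eqref{imaginarypowerforpre} is well defined on $\cD(T)=\cD(A)$. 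So it suffices to identify $J^{\varepsilon+\ii t}$ with $e^{\ii\vartheta\varepsilon}e^{-\vartheta t}A^{\varepsilon+\ii t}$.

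First I would fix $f\in\cD(A)$ and write, using the spectral resolution $A=\int_0^\infty s\,dE(s)$, that for each $\lambda>0$ the integrand $(T+\lambda I)^{-1}Tf=\int_0^\infty \frac{e^{\ii\vartheta}s}{e^{\ii\vartheta}s+\lambda}\,dE(s)f$. Then I would substitute this into the right-hand side of \eqref{imaginarypowerfor}, and justify interchanging the $d\lambda$-integral with the spectral integral: the improper Riemann integral over $\lambda$ converges in norm because near $\lambda=0$ the factor $\lambda^{\varepsilon+\ii t-1}$ is integrable (as $\varepsilon>0$) and the operator norm of $(T+\lambda I)^{-1}T$ stays bounded, while near $\lambda=\infty$ one has $\|(T+\lambda I)^{-1}Tf\|\le \lambda^{-1}\|Tf\|$ and $\lambda^{\varepsilon-2}$ is integrable; a dominated-convergence / Fubini argument against the scalar spectral measure $\langle E(\cdot)f,f\rangle$ then legitimizes the interchange. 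After the interchange the operator identity reduces to the scalar identity
\begin{align*}
\frac{\sin(\varepsilon+\ii t)\pi}{\pi}\int_0^\infty \lambda^{\varepsilon+\ii t-1}\,\frac{e^{\ii\vartheta}s}{e^{\ii\vartheta}s+\lambda}\,d\lambda = e^{\ii\vartheta\varepsilon}e^{-\vartheta t}\,s^{\varepsilon+\ii t}
\end{align*}
for every $s>0$, which I would obtain by the substitution $\lambda=s u$ followed by the classical formula $\int_0^\infty \frac{u^{z-1}}{u+c}\,du=\frac{\pi}{\sin\pi z}\,c^{z-1}$ (valid for $0<\operatorname{Re}z<1$ and $c$ in the cut plane), applied with $z=\varepsilon+\ii t$ and $c=e^{\ii\vartheta}$; the factor $c^{z-1}=e^{\ii\vartheta(\varepsilon-1+\ii t)}$ combines with the leading $e^{\ii\vartheta}$ to give $e^{\ii\vartheta(\varepsilon+\ii t)}=e^{\ii\vartheta\varepsilon}e^{-\vartheta t}$, where the principal branch of $s^{\varepsilon+\ii t}$ (and of $c^{z-1}$) is the one consistent with $|\vartheta|<\pi$. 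Since $s^{\varepsilon+\ii t}$ is the symbol of the self-adjoint functional calculus $A^{\varepsilon+\ii t}$ and $e^{\ii\vartheta\varepsilon}e^{-\vartheta t}$ is a constant, this establishes \eqref{imaginarypowerfor}. The middle equality $T^{\varepsilon+\ii t}f=e^{\ii\vartheta\varepsilon}e^{-\vartheta t}A^{\varepsilon+\ii t}f$ is then just the spectral mapping for the normal operator $T=e^{\ii\vartheta}A$, using $(e^{\ii\vartheta}s)^{\varepsilon+\ii t}=e^{\ii\vartheta(\varepsilon+\ii t)}s^{\varepsilon+\ii t}$ on $\sigma(A)\subseteq[0,\infty)$ with the same branch.

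The main obstacle is purely a matter of care rather than depth: rigorously justifying the exchange of the improper Riemann $\lambda$-integral with the spectral integral, including checking that the $L^2(\R)$-valued integrand is continuous on $(0,\infty)$ and that the two one-sided tails are absolutely (norm-) convergent uniformly enough to apply Fubini against $\langle E(\cdot)f,f\rangle$; once that is in place, the evaluation of the scalar integral and the branch bookkeeping (ensuring all fractional powers use the cut along the negative real axis, which is available precisely because $|\vartheta|<\pi$ and $\ker A=\{0\}$ excludes the problematic point $s=0$ from carrying mass) are routine. One should also note that $\ker A=\{0\}$ is used only to make $A^{\varepsilon+\ii t}$ densely defined and to ensure $s^{\varepsilon+\ii t}$ is defined $E$-a.e.; no further hypothesis is needed.
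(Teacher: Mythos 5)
Your proposal is correct, and the scalar computation at its heart checks out: after the substitution $\lambda=su$ the integral becomes $s^{z}e^{\ii\vartheta}\int_0^\infty u^{z-1}(u+e^{\ii\vartheta})^{-1}\,du$ with $z=\varepsilon+\ii t$, and the beta-type formula with $c=e^{\ii\vartheta}$ (principal branch, legitimate since $|\vartheta|<\pi$) yields $e^{\ii\vartheta z}s^{z}\cdot\pi/\sin\pi z$, exactly cancelling the prefactor. Your route differs from the paper's at the key step: the paper, after verifying the sectoriality conditions \eqref{asslemmafracpre} exactly as you do, simply cites Example 3.3.2 of the Martinez Carracedo--Sanz Alix monograph \cite{CS} for the fact that the closure of the Balakrishnan operator $J^{\varepsilon+\ii t}$ coincides with the functional-calculus power $T^{\varepsilon+\ii t}$ of the normal operator $T$, and then applies the spectral mapping $T^{\varepsilon+\ii t}=e^{\ii\vartheta\varepsilon}e^{-\vartheta t}A^{\varepsilon+\ii t}$; you instead prove that identification directly by writing $(T+\lambda I)^{-1}Tf$ through the spectral resolution of $A$, interchanging the improper $\lambda$-integral with the spectral integral (your norm estimates $O(1)$ near $0$ and $O(\lambda^{-1})$ near $\infty$ do justify this for $0<\varepsilon<1$), and evaluating the resulting scalar integral. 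The paper's argument is shorter but leans on an external reference; yours is self-contained and makes the branch bookkeeping explicit, at the cost of the (routine but necessary) Fubini justification. Both are valid proofs of the proposition.
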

\begin{proof}
Using that $|\vartheta|<\pi$ and $A\geq 0$  it is easily verified that the operator $T$ satisfies the conditions stated in (\ref{asslemmafracpre}). Hence formula (\ref{imaginarypowerforpre}) for the Balakrishnan operator $J^{\varepsilon +\ii t}$ holds. For the normal operator $T$  the closure of the operator $J^{\varepsilon +\ii t}$ is just the power $T^{\varepsilon+ \ii t}$ defined by the  functional calculus  (see Example 3.3.2 in \cite{CS}), where the principal branch of the complex power has to be taken. Further,  since $|\vartheta|<\pi$, we have $ T^{\varepsilon+ \ii t} =e^{\ii \vartheta \varepsilon} e^{-\vartheta t}A^{\varepsilon+ \ii t}$. Hence formula (\ref{imaginarypowerfor}) follows from (\ref{imaginarypowerforpre}).
\end{proof}

\begin{thl}\label{limitvaresplion}
If $A$ is a positive self-adjoint operator with trivial kernel, then 
$$\lim_{\varepsilon \to +0} A^\varepsilon f=f \quad {\rm for}\quad f\in \cD(A).
$$
\end{thl}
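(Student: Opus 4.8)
The plan is to use the spectral theorem for the positive self-adjoint operator $A$ together with the dominated convergence theorem. Since $A$ has trivial kernel, the spectral measure $E(\cdot)$ of $A$ is supported on $(0,\infty)$ (up to the null set $\{0\}$), and for $f\in\cD(A)$ we have $\int_0^\infty s^2\,d\langle E(s)f,f\rangle<\infty$. The quantity to control is
\begin{align*}
\|A^\varepsilon f-f\|^2=\int_0^\infty |s^\varepsilon-1|^2\,d\langle E(s)f,f\rangle .
\end{align*}

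First I would note the pointwise limit: for each fixed $s>0$, $s^\varepsilon\to 1$ as $\varepsilon\to+0$, so the integrand tends to $0$ pointwise on $(0,\infty)$. Next I would produce an integrable majorant independent of $\varepsilon$ for $\varepsilon\in(0,1]$. On the region $s\le 1$ one has $|s^\varepsilon-1|\le 1$, so the integrand is bounded by the finite constant $1$ there; on the region $s\ge 1$ one has $0\le s^\varepsilon-1\le s-1\le s$ for $0<\varepsilon\le 1$, hence $|s^\varepsilon-1|^2\le s^2$, which is $\langle E(\cdot)f,f\rangle$-integrable because $f\in\cD(A)$. Thus the integrand is dominated by the fixed integrable function $s\mapsto \max(1,s^2)$ (or more simply $1+s^2$) uniformly in $\varepsilon\in(0,1]$.

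Then the dominated convergence theorem applies and gives $\|A^\varepsilon f-f\|^2\to 0$ as $\varepsilon\to+0$, which is the claim. The only mild subtlety — and the one point I would state carefully rather than as a routine calculation — is the bookkeeping at $s=0$: since $\ker A=\{0\}$, $E(\{0\})=0$, so the origin carries no mass and the integrals may legitimately be taken over $(0,\infty)$, where $s^\varepsilon$ is well-defined and continuous in $\varepsilon$; without the trivial-kernel hypothesis the statement would fail, as $A^\varepsilon f$ would pick up the projection onto $\ker A$ in the limit rather than $f$ itself. No real obstacle is expected beyond making sure the majorant is chosen so as to cover both the small-$s$ and large-$s$ regimes simultaneously.
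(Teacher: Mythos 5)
Your proof is correct and follows exactly the paper's argument: write $\|A^\varepsilon f-f\|^2$ via the spectral theorem and apply dominated convergence, with $f\in\cD(A)$ supplying the majorant. You merely make explicit the majorant $1+s^2$ and the role of the trivial-kernel hypothesis, which the paper leaves implicit.
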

\begin{proof}
By the spectral calculus  we have 
$$\|A^{\varepsilon}f-f\|^2 =\int_0^\infty |\lambda^\varepsilon- 1|^2~ d\langle E(\lambda)f,f\rangle. 
$$
Passing to the limit $\varepsilon \to +0$ and using Lebesgue's dominated convergence theorem (by the assumption $f\in \cD(A)$)  we obtain the assertion.
\end{proof} 

\section{Two Classes of Well-behaved Representations of Relation (\ref{qplane1})}\label{twoclasses}

In this section we describe some  well-behaved   representations of relation (\ref{qplane1}). For this  we also restate some  results from \cite{S1994}.

Recall that $q=e^{-\ii \theta_0}$ and $0<|\theta_0|<\pi$ by (\ref{qtheta0}). 
Set $\theta_{1} :=\theta_0 -\pi$ if $\theta_0 >0$,  $\theta_{1} :=\theta_0 +\pi$ if $\theta_0 < 0$. Then we also have
$$
-q=e^{- \ii \theta_1} \quad {\rm and}\quad 0<|\theta_1|<\pi .
$$

If $A=0$ or if $B=0$, then $\cD_q(A,B)=\cD(B)$ resp. $\cD_q(A,B)=\cD(A)$ and it is obvious that the pair $\{A,B\}$  satisfies the relation (\ref{qplane1})  and  the  resolvent relations (\ref{resolventw}) and (\ref{resolvents}). We  call  pairs of the form $\{0,B\}$ and $\{A,0\}$ {\it trivial representations} of relation (\ref{qplane1}). 

Interesting representations of relation (\ref{qplane1}) are  the classes $\cC_0$ and $\cC_{1}$  defined as follows. 
  \begin{thd}\label{classc1} 
Suppose that\, ${\rm ker}\, A= {\rm ker}\, B=\{0\}$.   We say that the pair $\{A,B\}$  is  a {\rm  representation of the class} $\cC_0$ if 
  \begin{align}
  |A|^{\ii t} B &\subseteq e^{\theta_0 t}B |A|^{\-\ii t}~~, ~~ t\in \R,~~~   ~{\rm and}~~~~ U_A B \subseteq BU_A.
  \end{align}
 and that the pair $\{A,B\}$  is  a {\rm  representation of the class} $\cC_1$ if 

  \begin{align}
  |A|^{\ii t} B &\subseteq e^{\theta_1 t}B |A|^{\-\ii t}~~ , ~~ t\in \R,~~~~   ~{\rm and}~~~~ U_A U_B = - U_BU_A.
  \end{align}  
\end{thd}  

 \begin{thd}
 The trivial pairs $\{A_2,0\}$,  $\{0,B_2\}$ and pairs $\{A_0,B_0\}$  and  $\{A_{1},B_{1}\}$ of the classes $\cC_0$ and $\cC_{1}$, respectively, and  orthogonal direct sum of such pairs are called {\rm well-behaved representations} of relation (\ref{qplane1}).  

\end{thd} 
{\bf Remarks.} 1. Note that the class $\cC_0$ defined above is precisely the class $\cC_0$ in \cite{S1994}, while the class $\cC_1$ according to Definition \ref{classc1} corresponds to  $\cC_1$  if $\theta_0<0 $ and to  $\cC_{-1}$ if $\theta_0 >0$  in \cite{S1994}.

2. Suppose that $\{A,B\}$ is a well-behaved representation of relation (\ref{qplane1}). If $A\geq 0$ and ${\rm ker}\, A= {\rm ker}\, B=\{0\}$, then $U_A=I$ and  $\{A,B\}$ is a pair of the class $\cC_0$. Further, if $A\geq 0$, then the well-behaved representation $\{A,B\}$ cannot have an orthogonal summand of the class $\cC_1$. 

3. As it is usual for   relations having unbounded operator representations there are  many "bad" unbounded representations of relation (\ref{qplane1}). In \cite{S1992} pairs  of self-adjoint operators $A$ and $B$ have been constructed for which $\cD_q(A,B)$ is a core for $A$ and $B$, but the pair $\{A,B\}$ is not a well-behaved representation of  relation (\ref{qplane1}) and it is not in one of classes\, $\cC_n$, $n\in \Z$, defined in \cite{S1994}.
\medskip

Let us describe all pairs of the classes $\cC_0$ and $\cC_{1}$ up to unitary equivalence. We fix real numbers $\alpha,\alpha_1, \beta, \beta_1$, where $\beta>0$, $\beta_1>0$, such that
 \begin{align}
 \alpha \beta=\theta_0~\quad {\rm and}~\quad \alpha_1 \beta_1=\theta_1,\quad{\rm where}\quad q=e^{-\ii \theta_0}\quad~~{\rm and}\quad {-}q=e^{-\ii \theta_1}.
 \end{align}
Let $\cK$ be a Hilbert space.

Let $u,v$ be two commuting self-adjoint unitaries on $\cK$. We define self-adjont operators $A$ and $B$ on the Hilbert space $\Hh=\cK\otimes L^2(\R)$ a by
\begin{align}\label{unitarycc1}
 A_0= u\otimes e^{\alpha Q},\quad B_0=v\otimes e^{\beta P}
\end{align}
 and self-adjoint operators $A_1$ and $B_1$ on the Hilbert space $\Hh_1=(\cK\oplus\cK)\otimes L^2(\R)$ by the operator matrices
\begin{align}\label{unitarycc2}
   A_{1} =\begin{pmatrix} e^{\alpha_{1} Q}& 0\\&-e^{\alpha_{1} Q}\end{pmatrix},\quad  B_1= \begin{pmatrix}0&e^{\beta_1 P} \\ e^{\beta_1 P}&0
   \end{pmatrix}.
\end{align}
\begin{thp}\label{classpropwell}
The pairs $\{A_0,B_0\}$ and $\{A_1,B_1\}$ belong to the classes $\cC_0$ and $\cC_1$, respectively. Each pair of the class $\cC_0$ resp. $\cC_1$
is unitarily equivalent to a pair $\{A_0,B_0\}$ resp. $\{A_1,B_1\}$ of the form (\ref{unitarycc1})  resp. (\ref{unitarycc2}).
\end{thp}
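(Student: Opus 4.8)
The plan is to split the statement into two halves: first verify that the model pairs \eqref{unitarycc1} and \eqref{unitarycc2} actually lie in $\cC_0$ and $\cC_1$, and then prove the structure theorem that every pair of the class is unitarily equivalent to one of these.

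For the first half, consider $\{A_0,B_0\}$ with $A_0=u\otimes e^{\alpha Q}$, $B_0 = v\otimes e^{\beta P}$. Since $u$ is a self-adjoint unitary, $|A_0| = I\otimes e^{\alpha Q}$ and $U_{A_0} = u\otimes I$ (using $\ker e^{\alpha Q}=\{0\}$), hence $\ker A_0 = \ker B_0 = \{0\}$. The relation $U_{A_0}B_0\subseteq B_0 U_{A_0}$ is immediate because $u$ and $v$ commute and $u\otimes I$ commutes with $I\otimes e^{\beta P}$. For the twisted commutation $|A_0|^{\ii t}B_0\subseteq e^{\theta_0 t}B_0|A_0|^{-\ii t}$, I would compute on the dense invariant domain $\cK\otimes\cD_0$: $|A_0|^{\ii t} = I\otimes e^{\ii t\alpha Q}$ and one checks $e^{\ii t\alpha Q}e^{\beta P} = e^{\ii t\alpha\beta}e^{\beta P}e^{\ii t\alpha Q} = e^{-\theta_0 t}\,e^{\beta P}e^{\ii t\alpha Q}$ by the Weyl relation, giving exactly $e^{\alpha Q}$-power conjugation on the functions $e^{-\epsilon x^2+\gamma x}$ where everything is entire; then pass to the operator inclusion. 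For $\{A_1,B_1\}$: $|A_1| = I_2\otimes e^{\alpha_1 Q}$ and $U_{A_1} = \mathrm{diag}(I,-I)$ on $\cK\oplus\cK$, while $U_{B_1} = \begin{pmatrix}0&I\\I&0\end{pmatrix}$, and one verifies $U_{A_1}U_{B_1} = -U_{B_1}U_{A_1}$ by matrix multiplication. The twisted relation with $\theta_1$ in place of $\theta_0$ is the same computation using $\alpha_1\beta_1 = \theta_1$.

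For the structure theorem — the harder half — I would follow the standard route for such $q$-commutation problems. Start from a pair $\{A,B\}$ in $\cC_0$. Write the polar decompositions $A = U_A|A|$ and $B = U_B|B|$; since $\ker A=\ker B=\{0\}$, $U_A$ and $U_B$ are self-adjoint unitaries. The relation $|A|^{\ii t}B\subseteq e^{\theta_0 t}B|A|^{-\ii t}$ together with $U_A B\subseteq BU_A$ should be rephrased, via $|A| = e^{L}$ for a self-adjoint $L = \log|A|$, as the Weyl-type relation $e^{\ii tL}B e^{-\ii tL} = e^{\theta_0 t}B$; this is an integrated form of a commutator $[L, \cdot]$ acting on $B$, which is the hallmark of a (shifted) Schrödinger pair. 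One then invokes the Stone–von Neumann uniqueness theorem (or its version for one-parameter groups with a scaling/dilation relation): the pair $(L, \text{"}\log B\text{"})$ satisfies, after the substitution that turns the multiplicative twist into an additive one, the canonical commutation relation up to the constant $\theta_0$. The Stone–von Neumann theorem then gives a Hilbert space decomposition $\Hh\cong\cK\otimes L^2(\R)$ on which $|A|$ becomes $I\otimes e^{\alpha Q}$ and $|B|$ becomes (conjugate to) $I\otimes e^{\beta P}$ — more precisely one realizes the dilation group generated by $L$ as translations. The multiplicities $\cK$ absorb everything that commutes with the irreducible blocks; the commuting self-adjoint unitaries $u,v$ arise as $U_A$ and $U_B$ restricted to $\cK$, using $U_AB\subseteq BU_A$ to see they commute with the Schrödinger data and with each other. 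For $\cC_1$, the extra relation $U_AU_B = -U_BU_A$ means $U_A$ and $U_B$ cannot commute, so the smallest nontrivial block is $2$-dimensional in the unitary part: one diagonalizes $U_A$ into $\pm 1$ eigenspaces, writes everything as $2\times 2$ operator matrices, and the anticommutation forces the off-diagonal form of $B_1$ and the $\mathrm{diag}(+,-)$ form of $A_1$ in \eqref{unitarycc2}; the $e^{\beta_1 P}$ entries come from the same Stone–von Neumann analysis applied to $|A|$, $|B|$ with $\theta_1$ replacing $\theta_0$. Much of this is carried out in \cite{S1994}, which the paper explicitly says it is restating, so I would cite \cite{S1994} for the classification itself and only supply the verification that the normal forms \eqref{unitarycc1}, \eqref{unitarycc2} match Definition \ref{classc1}.

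The main obstacle is the domain/core bookkeeping in applying Stone–von Neumann: the relations in Definition \ref{classc1} are operator \emph{inclusions}, not equalities, so one must argue that the inclusion is in fact an equality (or that there is a common core, e.g. the image of $\cD_0$ under the putative unitary) before the uniqueness theorem applies cleanly; getting from the weak twisted relation to a genuine unitary equivalence of the \emph{unbounded} operators $A$ and $B$ (not merely their bounded functions) requires controlling $\cD(A)\cap\cD(B)$ and checking essential self-adjointness of the relevant generators. Since \cite{S1994} already handled exactly these points, the cleanest exposition is to reduce to that reference for the classification and to present only the forward direction (that the explicit pairs satisfy the definition) in full detail here.
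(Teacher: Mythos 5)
The paper offers no proof of this proposition at all: immediately after Corollary \ref{wellbehcor3} it states that Proposition \ref{classpropwell} and the corollaries are contained in \cite[Section 2]{S1994}, so your reduction of the classification half to that reference is exactly what the authors do, and your direct verification that the normal forms \eqref{unitarycc1}, \eqref{unitarycc2} satisfy Definition \ref{classc1} (together with the Stone--von Neumann sketch, which is indeed how \cite{S1994} proceeds) is a harmless addition rather than a divergence. One small slip in that verification: with $P=\ii\frac{d}{dx}$ and $(e^{\beta P}f)(x)=f(x+\ii\beta)$ the computation gives $e^{\ii t\alpha Q}e^{\beta P}=e^{t\alpha\beta}\,e^{\beta P}e^{\ii t\alpha Q}=e^{\theta_0 t}\,e^{\beta P}e^{\ii t\alpha Q}$ --- a \emph{real} factor $e^{+\theta_0 t}$, not $e^{\ii t\alpha\beta}=e^{-\theta_0 t}$ as you wrote --- which is precisely what matches the form $|A|^{\ii t}B\subseteq e^{\theta_0 t}B|A|^{\ii t}$ of the defining relation as it is actually used in Proposition \ref{wellbehaveuaub}.
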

\begin{thc}\label{wellbehcor1}
Up to unitary equivalence there are precisely five nontrivial irreducible well-behaved representations of relation (\ref{qplane1}). These are the fours pairs  $\{A= \varepsilon_1 e^{\alpha Q}, B=\varepsilon_2 e^{\beta P}\}$ on $L^2(\R)$, where $\varepsilon_1, \varepsilon_2\in \{+1,-1\}$, and the  pair $\{A,B\}$ on $\C^2\otimes L^2(\R)$ given by (\ref{unitarycc2}) with $\cK=\C$. 

Any well-behaved representation\, $\{A,B\}$\, of relation (\ref{qplane1}) satisfying\, ${\rm ker}\, A= {\rm ker}\, B=\{0\}$ is a direct orthogonal  sum of these representations.
\end{thc}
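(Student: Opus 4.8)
The plan is to deduce Corollary~\ref{wellbehcor1} from Proposition~\ref{classpropwell} by decomposing an arbitrary well-behaved representation into irreducible pieces and then identifying the irreducible pieces of each type. First I would recall the definition of a well-behaved representation: it is an orthogonal direct sum of trivial pairs $\{A_2,0\}$, $\{0,B_2\}$, a pair $\{A_0,B_0\}$ of class $\cC_0$, and a pair $\{A_1,B_1\}$ of class $\cC_1$. Since the summand $\{0,B_2\}$ decomposes further as a multiplicity of the single trivial irreducible pair $\{0,\mathbb{R}\}$ acting on $\C$ (by the spectral theorem for the self-adjoint operator $B_2$), and similarly for $\{A_2,0\}$, and since the hypothesis $\ker A = \ker B = \{0\}$ forces both trivial summands to be absent, it suffices to analyze the $\cC_0$ and $\cC_1$ parts.

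Next I would invoke Proposition~\ref{classpropwell}: the $\cC_0$-part is unitarily equivalent to $\{u\otimes e^{\alpha Q}, v\otimes e^{\beta P}\}$ on $\cK\otimes L^2(\R)$ for commuting self-adjoint unitaries $u,v$. By simultaneously diagonalizing $u$ and $v$ (they commute and $u^2=v^2=I$, so $\cK$ splits as an orthogonal sum of joint eigenspaces on which $u=\varepsilon_1 I$, $v=\varepsilon_2 I$ with $\varepsilon_1,\varepsilon_2\in\{+1,-1\}$), the $\cC_0$-part becomes an orthogonal sum of copies of the four pairs $\{\varepsilon_1 e^{\alpha Q},\varepsilon_2 e^{\beta P}\}$ on $L^2(\R)$. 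Each of these is irreducible: the von Neumann algebra generated by $e^{\alpha Q}$ and $e^{\beta P}$ (equivalently by $Q$ and $P$, since $\alpha,\beta\neq 0$ and the exponentials have trivial kernel so $Q$ and $P$ are recovered by functional calculus) is all of $B(L^2(\R))$ by the Stone--von Neumann theorem, so no nontrivial reducing subspace exists. Similarly, the $\cC_1$-part is unitarily equivalent to a pair $\{A_1,B_1\}$ of the form \eqref{unitarycc2} over some $\cK$; taking an orthonormal basis of $\cK$ shows it is an orthogonal sum of $\dim\cK$ copies of the single pair on $\C^2\otimes L^2(\R)$ obtained by putting $\cK=\C$, and that pair is irreducible because conjugation by the flip on $\C^2$ together with the Stone--von Neumann argument generates $B(\C^2\otimes L^2(\R))$ (concretely: $B_1$ connects the two copies and $A_1$ distinguishes them, and the restriction to each copy is a full matrix algebra).

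The last point to nail down is that these five representations are pairwise non-equivalent, so the list is irredundant. The pair from \eqref{unitarycc2} is distinguished from the four one-dimensional pairs because its $A$ is not semibounded (it has spectrum symmetric about $0$), whereas each $\varepsilon_1 e^{\alpha Q}$ is either positive or negative; alternatively $U_{A_1}U_{B_1}=-U_{B_1}U_{A_1}$ while in the $\cC_0$ pairs $U_AU_B=U_BU_A$. Among the four $\cC_0$ pairs, the signs of $A$ and of $B$ (i.e.\ whether each operator is positive or negative definite) are unitary invariants, so no two of them coincide. Putting these observations together yields exactly five nontrivial irreducible well-behaved representations and expresses an arbitrary one with trivial kernels as an orthogonal sum of them.

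The main obstacle I anticipate is establishing irreducibility cleanly, in particular justifying that the von Neumann algebra generated by $\{A,B\}$ (a pair of \emph{unbounded} operators) coincides with the one generated by $\{Q,P\}$ and then applying Stone--von Neumann; this requires care about which bounded functions of $A$ and $B$ one is allowed to use and about the precise commutation relation satisfied by the bounded resolvents. The decomposition arguments (diagonalizing $u,v$; splitting off basis vectors of $\cK$) and the non-equivalence arguments are routine by comparison.
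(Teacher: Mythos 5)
The paper itself offers no proof of this corollary: immediately after Corollary~\ref{wellbehcor3} it states that Proposition~\ref{classpropwell} and Corollaries~\ref{wellbehcor1}--\ref{wellbehcor3} are contained in Section~2 of \cite{S1994}. Your derivation of the corollary from Proposition~\ref{classpropwell} is correct and is essentially the standard argument one would reconstruct from that reference: the kernel hypothesis kills the trivial summands; simultaneous diagonalization of the commuting self-adjoint unitaries $u,v$ splits the $\cC_0$ model \eqref{unitarycc1} into copies of the four pairs $\{\varepsilon_1 e^{\alpha Q},\varepsilon_2 e^{\beta P}\}$; the $\cC_1$ model \eqref{unitarycc2} acts as the identity on the $\cK$ factor and is therefore $\dim\cK$ copies of the $\cK=\C$ pair; irreducibility follows because a reducing subspace for the pair reduces $|A|$, $U_A$, $|B|$, $U_B$, hence $Q$ and $P$ via functional calculus, and Stone--von Neumann applies; and the five representations are separated by the unitary invariants you name (semiboundedness/sign of $A$ and $B$, anticommutation of the phases). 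Two small points of precision: your phrase about ``the single trivial irreducible pair $\{0,\R\}$'' is off --- the irreducible trivial pairs form a one-parameter family $\{0,b\}$, $b\in\R$, on $\C$ --- but this is harmless since the corollary counts only nontrivial irreducibles and the kernel condition excludes trivial summands anyway; and the obstacle you flag about unbounded operators is not serious here, since reducing subspaces of a self-adjoint operator are exactly those reducing its spectral projections, so passing from $e^{\alpha Q}$ to $Q$ (using $\alpha\neq 0$, which holds because $\alpha\beta=\theta_0\neq 0$) is immediate.
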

\begin{thc}\label{wellbehcor2}
Let $\{A,B\}$ be a well-behaved representation  of relation (\ref{qplane1}) for which\, ${\rm ker}\, A= {\rm ker}\, B=\{0\}$. Then there is a linear subspace $\cD\subseteq \cD(A)\cap\cD(B)$ such that\\
(i)~~ $A\cD=\cD$, $B\cD=\cD$, and\, $ |A|^{\ii t}\cD=\cD$, $|B|^{\ii t}\cD=\cD$\, for $t\in \R$,\\
(ii)~ $\cD$ is a core for $A$ and $B$,\\
(iii) $ABf=BAf$ for $f\in \cD$.
\end{thc}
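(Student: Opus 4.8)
The plan is to prove the corollary by reducing to the irreducible building blocks and exhibiting one explicit invariant core on each. By Corollary~\ref{wellbehcor1} (with Proposition~\ref{classpropwell}) a well-behaved representation $\{A,B\}$ with $\ker A=\ker B=\{0\}$ is an orthogonal direct sum of copies of the five nontrivial irreducible pairs: the four pairs $\{\varepsilon_1 e^{\alpha Q},\varepsilon_2 e^{\beta P}\}$ on $L^2(\R)$ and the $\cC_1$-pair $\{A_1,B_1\}$ on $\C^2\otimes L^2(\R)$. Invariance under the four operators and the commutation relation are inherited by orthogonal direct sums, and an algebraic (finitely supported) direct sum of cores is again a core; so it suffices to construct, on each summand, a subspace with properties (i)--(iii) and then take the algebraic direct sum. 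I would fix one summand and work there, with the common real-line factor being $\cD_0={\rm Lin}\{e^{-\varepsilon x^2+\gamma x}:\varepsilon>0,\ \gamma\in\C\}$.

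First I would verify (i). The key observation is that $\cD_0$ is invariant under multiplication by every exponential $e^{cx}$ ($c\in\C$) and under every shift $x\mapsto x+w$ ($w\in\C$), in each case producing a constant multiple of an element of $\cD_0$ by completing the square. On the $L^2(\R)$-factor the four operators restrict to exactly such transformations: $e^{\alpha Q}$ is multiplication by $e^{\alpha x}$, $|A|^{\ii t}$ is multiplication by $e^{\ii t\alpha x}$ (since $|e^{\alpha Q}|=e^{\alpha Q}$), while $e^{\beta P}$ is the imaginary shift $x\mapsto x+\ii\beta$ by Lemma~\ref{ebatepcondition} and \eqref{actionb}, and $|B|^{\ii t}$ is the real shift $x\mapsto x-t\beta$ (since $|e^{\beta P}|=e^{\beta P}>0$). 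Hence $\cD_0$ is stable under all four. The bounded self-adjoint unitaries $u,v$ in the $\cC_0$-pair $A=u\otimes e^{\alpha Q}$, $B=v\otimes e^{\beta P}$, and the matrix structure of $A_1,B_1$ in the $\cC_1$-pair, contribute only bounded factors that commute with these actions and with the moduli; so setting $\cD:=\cK\otimes_{\rm alg}\cD_0$ (resp. $\cD:=\C^2\otimes_{\rm alg}\cD_0$) gives invariance of $\cD$ under $A,B,|A|^{\ii t},|B|^{\ii t}$.

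For (ii) I would use that $\cD_0$ is a core for both $e^{\alpha Q}$ and $e^{\beta P}$ (recalled after Corollary~\ref{sfabcomm}). Tensoring with a fixed Hilbert space and composing with the bounded invertible unitary factors preserves the core property, and the passage to the orthogonal direct sum supplied by Corollary~\ref{wellbehcor1} is handled by the standard fact that an algebraic direct sum of cores is a core for the direct-sum operator. Property (iii) is then read off from Corollary~\ref{sfabcomm}, which computes the action of $e^{\alpha Q}$ and $e^{\beta P}$ on the functions of $\cD_0$ explicitly; once the commuting sign and unitary factors are restored (they drop out of both sides because $u,v$ commute and the off-diagonal structure of $A_1,B_1$ is accounted for by the matrix multiplication), this yields the commutation relation (iii) of the corollary for every $f\in\cD$.

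The genuine work, and the step I expect to be the main obstacle, is the \emph{simultaneous} invariance in (i): one must find a single subspace left invariant by all four operators at once. This is what forces the symmetric Gaussian weight $e^{-\varepsilon x^2}$ rather than, say, polynomials times one fixed Gaussian: only such a space is stable both under multiplication by $e^{cx}$ (needed for $A$ and $|A|^{\ii t}$) and under complex shifts $x\mapsto x+w$ (needed for $B$ and $|B|^{\ii t}$), and it is precisely this double stability that makes $\cD_0$ the right choice. A secondary technical point is the stability of the core property under the (possibly infinite) orthogonal direct sum. By contrast, relation (iii) is the least problematic ingredient, since on each summand it reduces to the single explicit identity of Corollary~\ref{sfabcomm}; the only care needed there is to track the constant arising from the interchange of $e^{\alpha Q}$ and $e^{\beta P}$, which is fixed by $\alpha\beta=\theta_0$ and $q=e^{-\ii\theta_0}$.
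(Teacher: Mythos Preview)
Your argument is correct and is the natural one: reduce via Corollary~\ref{wellbehcor1} to the five irreducible building blocks, use $\cD_0$ (or $\C^2\otimes\cD_0$) on each, and take the algebraic direct sum. The paper itself does not prove Corollary~\ref{wellbehcor2}; it simply records that this corollary, together with Proposition~\ref{classpropwell} and Corollaries~\ref{wellbehcor1} and~\ref{wellbehcor3}, is contained in \cite[Section~2]{S1994}. What you have written is presumably a self-contained reconstruction of that cited argument, and the ingredients you isolate (closure of $\cD_0$ under complex exponentials and complex shifts, the explicit actions from Lemma~\ref{ebatepcondition}/\eqref{actionb}, the core property recalled after Corollary~\ref{sfabcomm}, and the elementary fact about algebraic direct sums of cores) are exactly the right ones.

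Two small remarks. First, statement~(iii) as printed reads $ABf=BAf$, which is evidently a typo for $ABf=qBAf$; you have (correctly) interpreted it this way. Second, when you say ``$\cD_0$ is stable under all four'' you should note, as the corollary requires equalities $A\cD=\cD$ etc., that the inverses $A^{-1}$, $B^{-1}$, $|A|^{-\ii t}$, $|B|^{-\ii t}$ are of the same exponential/shift type and hence also preserve $\cD_0$; this is implicit in your argument but worth making explicit.
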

\begin{thc}\label{wellbehcor3}
A pair  $\{A,B\}$ is a well-behaved representation  (resp. of the class $\cC_0$ or $\cC_1$) of relation (\ref{qplane1}) if and only if $\{B,A\}$ is a well-behaved representation (resp. of the class $\cC_0$ or $\cC_1$) of relation (\ref{qplane2}).
\end{thc}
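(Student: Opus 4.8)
The plan is to reduce everything to the symmetry already noted in the text, namely that relation \eqref{qplane1}, $AB=qBA$, is equivalent to \eqref{qplane2}, $BAf=\overline q AB f$, and that the latter is obtained from the former by interchanging the roles of $A$ and $B$ and replacing $q$ by $\overline q$. Since $q=e^{-\ii\theta_0}$ gives $\overline q=e^{\ii\theta_0}=e^{-\ii(-\theta_0)}$, passing from $q$ to $\overline q$ amounts to replacing $\theta_0$ by $-\theta_0$, and correspondingly $\theta_1$ by $-\theta_1$ (by the defining rule $\theta_1=\theta_0-\pi$ for $\theta_0>0$ and $\theta_1=\theta_0+\pi$ for $\theta_0<0$, which is clearly compatible with the sign flip). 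So the whole statement is a matter of checking that Definition \ref{classc1}, the notion of well-behavedness, and the trivial pairs are all symmetric under this simultaneous exchange.

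First I would treat the two classes directly from Definition \ref{classc1}. For $\cC_0$: the condition $|A|^{\ii t}B\subseteq e^{\theta_0 t}B|A|^{-\ii t}$ is, upon taking adjoints of the bounded-times-closed operators $|A|^{\ii t}B$ and $B|A|^{-\ii t}$ (using that $|A|^{\ii t}$ is unitary, so $(|A|^{\ii t}B)^\ast=B^\ast|A|^{-\ii t}=B|A|^{-\ii t}$ and similarly on the right), equivalent to $B|A|^{-\ii t}\subseteq e^{\theta_0 t}|A|^{\ii t}B$, i.e. $|B|$-free; replacing $t$ by $-t$ and rearranging one gets exactly $|B|^{\ii t}A\subseteq e^{(-\theta_0)t}A|B|^{-\ii t}$ — wait, this needs the extra input that $B$ and $A$ play symmetric roles, so more cleanly: one shows that $|A|^{\ii t}B\subseteq e^{\theta_0 t}B|A|^{-\ii t}$ together with $U_AB\subseteq BU_A$ forces (via the explicit form of Proposition \ref{classpropwell}, or by a direct commutation-relation manipulation) the pair $\{B,A\}$ to satisfy $|B|^{\ii t}A\subseteq e^{-\theta_0 t}A|B|^{-\ii t}$ and $U_B A\subseteq AU_B$, which is precisely the $\cC_0$-condition for \eqref{qplane2} with $q$ replaced by $\overline q$. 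For $\cC_1$ the scalar condition is handled identically with $\theta_0$ replaced by $\theta_1$, and the phase condition $U_AU_B=-U_BU_A$ is manifestly symmetric in $A\leftrightarrow B$, so it transfers verbatim. The cleanest route here — and the one I would actually write — is not to manipulate the unbounded inclusions at all but to invoke Proposition \ref{classpropwell}: a pair is in $\cC_0$ iff it is unitarily equivalent to $\{u\otimes e^{\alpha Q},\,v\otimes e^{\beta P}\}$, and by Corollary \ref{sfabcomm} (with the roles of the two exponentials swapped, i.e. using $e^{\beta P}=e^{\beta Q'}$ in Fourier-conjugated coordinates) the swapped pair $\{v\otimes e^{\beta P},\,u\otimes e^{\alpha Q}\}$ is of the same tensor form, now with parameters $(\beta,\alpha)$, giving $\overline q=e^{-\ii\beta\alpha}\cdot$(sign) — one checks $\beta\alpha$ versus $\alpha\beta$ and the sign of $\theta_0$ — so it lies in the $\cC_0$-class for \eqref{qplane2}. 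The $\cC_1$ case uses the block form \eqref{unitarycc2}, where swapping $A_1\leftrightarrow B_1$ and conjugating by the Fourier transform (tensored with the identity on $\cK\oplus\cK$) produces again a pair of the form \eqref{unitarycc2} with $\alpha_1,\beta_1$ interchanged.

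Finally I would dispose of the bookkeeping: the trivial pairs $\{A_2,0\}$ and $\{0,B_2\}$ are exchanged with each other under $A\leftrightarrow B$, and $\{0,B\}$ satisfies $BAf=\overline q ABf$ on $\cD(A)$ trivially; orthogonal direct sums are preserved because the exchange operation commutes with forming direct sums. Hence the definition of \textbf{well-behaved representation} is invariant, and putting the three pieces together gives both the "well-behaved" statement and the finer "of the class $\cC_0$ (resp. $\cC_1$)" statement. The only genuine subtlety — the step I expect to be the main obstacle — is tracking the sign conventions: verifying that interchanging $(\alpha,\beta)$ with $(\beta,\alpha)$ and simultaneously flipping $q\mapsto\overline q$ (equivalently $\theta_0\mapsto-\theta_0$) keeps one inside the \emph{same} labelled class rather than moving $\cC_0$ to some $\cC_n$ with $n\neq 0$, and likewise that $\cC_1$ (which, per Remark 1, is $\cC_1$ or $\cC_{-1}$ of \cite{S1994} depending on $\sgn\theta_0$) maps to itself; this is a direct check using $\theta_1=\theta_0\mp\pi$ and the fact that $\overline q$-quantum-plane has $-\overline q=e^{-\ii(-\theta_1)}$, but it must be done carefully.
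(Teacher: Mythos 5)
Your proposal is essentially correct, but note that the paper itself offers no proof of this corollary: it is dispatched with the single sentence ``Proposition \ref{classpropwell} and Corollaries \ref{wellbehcor1}--\ref{wellbehcor3} are contained in \cite[Section 2]{S1994}.'' So any comparison is with the intended, unwritten argument. Your final strategy --- reduce to the classification of Proposition \ref{classpropwell} and check the model pairs --- is the right one, and the sign bookkeeping you flag does close: since $\overline q=e^{-\ii(-\theta_0)}$, the angles attached to relation \eqref{qplane2} are $-\theta_0$ and (by the defining rule for $\theta_1$) $-\theta_1$, and on the models one computes directly $|B_0|^{\ii t}A_0|B_0|^{-\ii t}=e^{-\theta_0 t}A_0$ with $U_{B_0}=v\otimes I$ commuting with $A_0$, respectively $|B_1|^{\ii t}A_1|B_1|^{-\ii t}=e^{-\theta_1 t}A_1$ with $U_{A_1}U_{B_1}=-U_{B_1}U_{A_1}$ manifestly symmetric; the trivial pairs swap into each other and direct sums are preserved. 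Two remarks. First, your middle paragraph attempting to derive the $\{B,A\}$ conditions from the $\{A,B\}$ conditions by taking adjoints is a dead end (as you half-notice): adjoining $|A|^{\ii t}B\subseteq e^{\theta_0 t}B|A|^{\ii t}$ only reproduces the same relation with $t\mapsto -t$ and says nothing about $|B|^{\ii t}$ versus $A$; that implication genuinely requires the structure theorem, so that paragraph should be deleted rather than repaired. Second, once you have the models you do not need the Fourier-conjugation step at all --- verifying Definition \ref{classc1} for the swapped pair directly on \eqref{unitarycc1} and \eqref{unitarycc2} is shorter and avoids having to renormalize to $\beta>0$; the Fourier route is only needed if you insist on exhibiting the swapped pair literally in the normal form of Proposition \ref{classpropwell} for $\overline q$.
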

Proposition \ref{classpropwell} and Corollaries \ref{wellbehcor1}--\ref{wellbehcor3} are contained in \cite[Section 2]{S1994}. 

The next proposition is essentially used in the proofs of various  theorems in Section \ref{characterizations}.

\begin{thp}\label{wellbehaveuaub}
Let $k=0,1$.
Suppose that ${\rm ker}~A={\rm ker}~B=\{0\}$ and $\cD_q(A,B)$ is a core for $B$. If\,  
\begin{align}\label{modabrel}
|A|^{\ii t}B\subseteq e^{\theta_k t} B|A|^{\ii t}\quad {\rm for}\quad t\in \R,
\end{align} 
then $\{A,B\}$ is a pair of the  class $\cC_k$.
\end{thp}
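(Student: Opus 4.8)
The statement to prove is Proposition \ref{wellbehaveuaub}: if $\ker A = \ker B = \{0\}$, the space $\cD_q(A,B)$ is a core for $B$, and the relation $|A|^{\ii t}B \subseteq e^{\theta_k t}B|A|^{\ii t}$ holds for all $t\in\R$, then $\{A,B\}$ is a pair of the class $\cC_k$. For $k=0$ the first defining condition of $\cC_0$ is exactly the hypothesis \eqref{modabrel}, so there the only thing to establish is $U_AB\subseteq BU_A$; for $k=1$ the first condition is again \eqref{modabrel} with $\theta_1$, so the task is to establish $U_AU_B = -U_BU_A$. In both cases the work is to extract the \emph{phase} relation between $U_A$ and $U_B$ from the \emph{modulus} relation \eqref{modabrel} together with the fact that $\cD_q(A,B)$ is a core for $B$ (which links the modulus relation back to the commutation relation $AB = qBA$ on a dense set).

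The plan is as follows. First I would record that, since $A$ and $B$ are self-adjoint with trivial kernels, we have the polar decompositions $A = U_A|A|$, $B = U_B|B|$ with $U_A, U_B$ self-adjoint unitaries commuting with $|A|$, $|B|$ respectively, and that on $\cD_q(A,B)$ the relation $ABf = qBAf$ holds. The strategy is to ``integrate'' \eqref{modabrel} over $t$ to recover information about the full operators. Concretely, consider the one-parameter group $|A|^{\ii t}$; the relation \eqref{modabrel} says that conjugation of $B$ by $|A|^{\ii t}$ rescales it by the scalar $e^{\theta_k t}$. Writing $A = U_A |A| = U_A e^{L}$ with $L = \log|A|$ self-adjoint, and using that $U_A$ commutes with every bounded function of $|A|$, I would analyze how $A$ itself (not just $|A|$) interacts with $B$ on the core $\cD_q(A,B)$, and compare with what \eqref{modabrel} forces. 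The relation $AB = qBA$ on $\cD_q(A,B)$ can be rewritten as $U_A|A|\,B = q\,U_B|B|\,U_A|A|$; feeding in \eqref{modabrel} in the form that governs $|A|^z B$ for complex $z$ in a strip (via the analytic continuation / three-lines argument implicit in Lemma \ref{ebatepcondition}-type reasoning, or directly from the bounded relation applied to spectral projections of $|A|$) should let me separate the equation into a modulus part, which is automatically satisfied, and a phase part of the form $U_A U_B = c\, U_B U_A$ for a scalar $c$ of modulus one. Since $U_A, U_B$ are self-adjoint, squaring gives $c^2 = 1$, so $c = \pm 1$; matching the sign with the value of $\theta_k$ (that is, with whether we used $\theta_0$ or $\theta_1 = \theta_0 \mp \pi$) pins down $c = +1$ in the case $k=0$ and $c = -1$ in the case $k=1$. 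The core hypothesis is what guarantees that a relation verified on $\cD_q(A,B)$ propagates to the stated operator inclusion $U_AB\subseteq BU_A$ (resp. the operator identity $U_AU_B = -U_BU_A$), rather than holding only on a possibly small subspace.

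I expect the main obstacle to be the rigorous passage from the relation on the dense set $\cD_q(A,B)$ to a clean operator-theoretic identity involving the unbounded $|B|$ and the unitaries $U_A$, $U_B$. One has to be careful that $|A|^{\ii t}$ maps $\cD_q(A,B)$ into itself (or at least into $\cD(B)$), that the analytic-continuation step taking $t$ real to $z$ in a complex strip is legitimate for vectors in the core, and that the scalar $e^{\theta_k t}$ genuinely forces $e^{\theta_0 \cdot(\,\cdot\,)}$ versus $e^{\theta_1\cdot(\,\cdot\,)}$ to be inconsistent unless the phase correction $U_AU_B(U_BU_A)^{-1}$ absorbs the discrepancy $e^{\pm\ii\pi} = -1$. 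This sign-bookkeeping — reconciling the shift $\theta_1 = \theta_0 \pm \pi$ with the requirement that $U_A,U_B$ be unitaries of square $I$ — is where the cases $k=0$ and $k=1$ genuinely diverge, and it is the step I would write out most carefully; the remaining verifications (that $\{A_0,B_0\}$-type operators satisfy the listed relations) are routine given Proposition \ref{classpropwell} and Corollary \ref{sfabcomm}.
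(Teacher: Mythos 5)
Your plan correctly identifies the target (for $k=0$ the inclusion $U_AB\subseteq BU_A$, for $k=1$ the identity $U_AU_B=-U_BU_A$, the modulus relation \eqref{modabrel} being the hypothesis itself), but as written it has two genuine gaps, and they are exactly where the content of the actual proof lives. First, you never supply a concrete mechanism for converting the imaginary-power relation \eqref{modabrel} into a usable relation between the unbounded operators $|A|$ and $B$. The paper invokes Proposition 2.3 of \cite{S1994}, which produces a linear subspace $\cD\subseteq\cD_{q_k}(|A|,B)$, $q_k:=(-1)^kq$, such that $\cD=|A|\cD$ is a core for $B$ and $|A|Bg=q_kB|A|g$ for $g\in\cD$. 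Your ``three-lines / analytic continuation'' sketch points in the right direction ($t\mapsto-\ii$ formally turns $e^{\theta_kt}$ into $e^{-\ii\theta_k}=q_k$), but the domain questions you yourself flag as the main obstacle --- which vectors admit the continuation, and whether they form a core invariant under $|A|$ --- are precisely what that external proposition settles; leaving them unresolved leaves the argument unstarted.

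Second, the step ``separate the equation into a modulus part and a phase part of the form $U_AU_B=cU_BU_A$'' is not a derivation: you posit that a scalar commutation relation between the phases already holds and then determine $c$ by squaring, whereas establishing any such relation is the whole difficulty. The paper's route is different and more careful: for $f\in\cD_q(A,B)$ and $g\in\cD$ it computes $\langle U_Af,B|A|g\rangle$ by pushing $B$ and $|A|$ back and forth using $ABf=qBAf$ and $|A|Bg=q_kB|A|g$, arriving at $\langle(-1)^kU_ABf,|A|g\rangle$; since $|A|\cD=\cD$ is a core for $B$ this forces $U_Af\in\cD(B)$ and $BU_Af=(-1)^kU_ABf$, and the core property of $\cD_q(A,B)$ then gives $U_AB\subseteq(-1)^kBU_A$. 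To reach the sharp identity $U_AU_B=(-1)^kU_BU_A$ one still needs the observation that $U_ABU_A\subseteq(-1)^kB$ is an inclusion of self-adjoint operators and hence an equality, which yields $U_A|B|=|B|U_A$ and then the phase relation on the dense range of $|B|$. Neither the duality computation nor this self-adjoint-extension argument appears in your plan, and without them the passage from the modulus relation to the phase relation does not go through.
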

\begin{proof}
Putting $q_k:=(-1)^k q$ we have $q_k=e^{\ii \theta_k}$. 
By (\ref{modabrel}),  Proposition 2.3 in \cite{S1994} applies to the pair $\{|A|,B\}$ and the  relation $|A|B=q_kB|A|$. Hence   there exists a linear subspace $\cD$ of $\cD_{q_k}(|A|,B)$ such that $\cD=|A|\cD$ is a core for $B$. Then $|A|B g=q_kB|A|g$ for $g\in \cD$ by the definition of $\cD_{q_k}(|A|,B)$. Since $A$ is self-adjoint and ${\rm ker}~A=\{0\}$, $U_A$ is self-adjoint unitary and $A=|A|U_A$. 

Let $f\in \cD_q(A,B)$ and $g\in \cD$. Using the preceding facts  we derive 
\begin{align*}
\langle U_{A}f,B|A|g\rangle&=\langle U_{A}f,\overline{q_k} |A|B g\rangle=\langle f,\overline{q_k} U_A|A|B g\rangle=\langle f,\overline{q_k} A B g\rangle=\langle q_k A f,B g\rangle\\&=\langle(-1)^k  qB A f,g\rangle=\langle (-1)^k A Bf,g\rangle=\langle (-1)^k U_{A}Bf,|A|g\rangle .
\end{align*}
Since $\cD=|A|\cD$ is a core for $B$,  from  the preceding equality we conclude that $  U_{A}f\in \cD(B)$ and $BU_{A}f=(-1)^k U_{A}B f$ for $f\in \cD_q(A,B)$. By assumption $\cD_q(A,B)$ is a core for $B$, so the  latter implies that $U_AB \subseteq (-1)^k B U_A$. 

Since $U_A$ is a self-adjoint unitary, we get $U_AB U_A\subseteq (-1)^k B$, that is, the self-adjoint operator $(-1)^kB$ is an extension of the self-adjoint operator $U_ABU_A$ on $\Hh$. This is only possible if $U_AB U_A= (-1)^k B$. From the latter it follows that  $U_A|B| U_A=|B| $ and hence ~ $|B|U_A= U_AU_A|B| U_A=|B|U_A$. Therefore, 
$$
U_AU_B |B| =U_AB \subseteq (-1)^k B U_A= (-1)^k U_B|B|U_A=(-1)^k  U_BU_A|B |.
$$
Since $\ker B=\{0\}$, the range of $ |B|$ is dense in $\Hh$, so  we get $U_AU_B=(-1)^kU_AU_B$. Thus, $\{A,B\}$ is in  $ \cC_k$.
\end{proof}
 
Now let us return to the weak resolvent equations
\begin{align}\label{weakabid1}
&q R_{\lambda q}(A)B\subseteq BR_\lambda(A),\\& 
\overline{q} R_{\mu \overline{q}}(B)A\subseteq AR_\mu(B).\label{weakabid2}
\end{align}
For the trivial representations $\{0,B\}$ resp. $\{A,0\}$ they are obviously fulfilled for all $\lambda\neq 0$ resp. $\mu\neq 0.$ The classes $\cC_0$ and $\cC_1$ are treated in the next theorem.
\begin{tht}\label{weakclasscandc1}
(i)
If  $\{A,B\}$ is a pair of the class $\cC_0$ for the relation (\ref{qplane1}), then (\ref{weakabid1}) and (\ref{weakabid2}) are satisfied for all $\lambda\in\C\backslash \cS(q)$ and $\mu\in\C\backslash \cS({\overline{q}})$. If in addition $A\geq 0$ resp. $B\geq 0$,  then (\ref{weakabid1}) resp. and (\ref{weakabid2}) holds  for $\lambda\in\C\backslash \cS(q)^+$ resp. $\mu\in\C\backslash \cS({\overline{q}})^+$.\\
 (ii) If $\{A,B\}$ is a pair of the class $\cC_1$ for the relation (\ref{qplane1}), then (\ref{weakabid1}) and (\ref{weakabid2}) are fulfilled for  $\lambda\in\C\backslash \cS(-q)$ and $\mu\in\C\backslash \cS(-{\overline{q}})$. 
\end{tht}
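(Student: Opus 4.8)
The strategy is to reduce both weak resolvent identities to the single concrete case already settled in Proposition~\ref{weaksanserifrel}, using the explicit models of Proposition~\ref{classpropwell}. Fix a pair $\{A,B\}$ of the class $\cC_0$. By Proposition~\ref{classpropwell} it is unitarily equivalent to a pair $\{A_0,B_0\}=\{u\otimes e^{\alpha Q}, v\otimes e^{\beta P}\}$ on $\cK\otimes L^2(\R)$ with $\alpha\beta=\theta_0$ and $u,v$ commuting self-adjoint unitaries; since weak resolvent inclusions are preserved under unitary equivalence it suffices to treat this model. Decompose $\cK$ into the four joint eigenspaces of $u$ and $v$ corresponding to the eigenvalue pairs $(\varepsilon_1,\varepsilon_2)\in\{\pm1\}^2$; on each such summand the pair becomes $\{\varepsilon_1 e^{\alpha Q},\varepsilon_2 e^{\beta P}\}$ on $L^2(\R)$, so it is enough to verify \eqref{weakabid1} and \eqref{weakabid2} for these four irreducible pairs.

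For the four irreducible pairs the computation is a direct extension of Proposition~\ref{weaksanserifrel}. First I would note that for $\varepsilon_1=\varepsilon_2=1$ the operator $A=e^{\alpha Q}$ is positive, the sector filled by $e^{\alpha z}$ on the strip $\{0\le\operatorname{Im}z\le\beta\}$ is exactly $\cS(q)^+$, and Proposition~\ref{weaksanserifrel} gives \eqref{weakabid1} for $\lambda\notin\cS(q)^+$; the relation \eqref{weakabid2} for this pair follows by applying the same proposition with $A$ and $B$ interchanged and $q$ replaced by $\overline q$ (equivalently, via Corollary~\ref{wellbehcor3} and the reformulation \eqref{resolventw1-bn}), which replaces $\cS(q)^+$ by $\cS(\overline q)^+$. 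For a general sign pattern, replacing $A$ by $\varepsilon_1 A$ replaces the strip image $\cS(q)^+$ by $\varepsilon_1\cS(q)^+$, and replacing $B=e^{\beta P}$ by $-e^{\beta P}$ only inserts a harmless scalar $-1$ on both sides of the inclusion (it commutes with every $R_\lambda(A)$); running through all four patterns the admissible $\lambda$ must avoid $\cS(q)^+\cup(-\cS(q)^+)=\cS(q)$, which yields the stated range $\lambda\in\C\setminus\cS(q)$ for general $A$, and the sharper range $\lambda\in\C\setminus\cS(q)^+$ when $A\ge0$ (so $\varepsilon_1=1$ on every summand). The same bookkeeping with $A,B$ interchanged and $q\to\overline q$ gives \eqref{weakabid2} on $\C\setminus\cS(\overline q)$, resp. on $\C\setminus\cS(\overline q)^+$ when $B\ge0$.

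For part~(ii) I would argue identically with the $\cC_1$ model \eqref{unitarycc2}. Reducing to $\cK=\C$, the pair is $\{A_1,B_1\}$ with $A_1=\operatorname{diag}(e^{\alpha_1Q},-e^{\alpha_1Q})$ and $B_1$ the off-diagonal matrix with entries $e^{\beta_1P}$, where $\alpha_1\beta_1=\theta_1$ and $-q=e^{-\ii\theta_1}$. The resolvent $R_\lambda(A_1)$ is diagonal with entries $R_\lambda(e^{\alpha_1Q})$ and $R_\lambda(-e^{\alpha_1Q})=-R_{-\lambda}(e^{\alpha_1 Q})$, and conjugating the inclusion $(-q)R_{\lambda(-q)}(e^{\alpha_1Q})e^{\beta_1P}\subseteq e^{\beta_1P}R_\lambda(e^{\alpha_1Q})$ from Proposition~\ref{weaksanserifrel} (with $\sf q$ there equal to $-q$, valid for $\lambda\notin\cS(-q)^+$) through the off-diagonal structure of $B_1$ produces exactly \eqref{weakabid1} with $q$ in place of $-q$, now with the constraint that both $\lambda$ and $-\lambda$ avoid $\cS(-q)^+$, i.e. $\lambda\in\C\setminus\cS(-q)$; \eqref{weakabid2} follows by the same interchange argument giving $\mu\in\C\setminus\cS(-\overline q)$.

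**Main obstacle.** The routine part is the sector bookkeeping; the one point requiring care is verifying that the off-diagonal $2\times2$ structure in the $\cC_1$ case really does convert the scalar relation with parameter $-q$ into the matricial relation with parameter $q$ — one must track how the sign flip between the two diagonal blocks of $A_1$ interacts with the anticommutation $U_{A_1}U_{B_1}=-U_{B_1}U_{A_1}$ so that the $-1$'s combine correctly. I expect this to be a short but genuinely matrix-level computation rather than pure sector geometry, and it is the step I would write out in full.
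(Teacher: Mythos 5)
Your proposal is correct and follows essentially the same route as the paper: reduce to the irreducible pairs of Corollary \ref{wellbehcor1} (your joint-eigenspace decomposition of $u,v$ is just a restatement of that), invoke Proposition \ref{weaksanserifrel} for $\{e^{\alpha Q},e^{\beta P}\}$, track the sign changes via $R_z(-A_0)=-R_{-z}(A_0)$ to arrive at $\cS(q)=\cS(q)^+\cup(-\cS(q)^+)$, and handle \eqref{weakabid2} by the interchange $A\leftrightarrow B$, $q\to\overline q$ through Corollary \ref{wellbehcor3}. The $2\times2$ computation for $\cC_1$ that you flag as the delicate step is exactly the computation the paper writes out, reducing the matricial relation with parameter $q$ to the scalar relation with parameter ${\sf q}=-q$ at both $\lambda$ and $-\lambda$.
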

\begin{proof}
By Corollary \ref{wellbehcor3} it suffices  to prove all assertions for the first relation (\ref{weakabid1}).  Clearly, (\ref{weakabid1}) is preserved under orthogonal direct sums. Therefore, by Corollary \ref{wellbehcor1}, it is sufficient to prove (\ref{weakabid1}) for the corresponding irreducible  representations listed in Corollary \ref{wellbehcor1}. 

(i): Let $\{A_0=  e^{\alpha Q}, B_0= e^{\beta P}\}$  and suppose that $\lambda\notin \cS(q)^+.$
Then, by Proposition \ref{weaksanserifrel}, relation (\ref{weakabid1}) is valid. Then, obviously,  (\ref{weakabid1}) holds  also for the  pair 
$\{A_0,- B_0\}$. Since $R_z(-A_0)=-R_{-z}(A_0)$, it follows  that (\ref{weakabid1}) is satisfied for the pairs $\{- A_0,\pm B_0\}$ provided that $\lambda \notin  -\cS(q)^+.$ 

(ii): We have to show that (\ref{weakabid1}) holds for the pair $\{A_1,B_1\}$ given by (\ref{unitarycc2}). Put ${A}:=e^{\alpha_1 Q}$, ${B}:=e^{\alpha_1 P}$ and ${\sf{q}}:=-q$. Then, since
\begin{equation}
 R_z( A) =\begin{pmatrix} R_z({A})&0\\0&R_z({\sf{-A}})\end{pmatrix},\quad  B = \begin{pmatrix}0&{B}\\{B}&0\end{pmatrix},
\end{equation} 
 the weak resolvent relation $q R_{\lambda q}( A) B\subseteq  B R_\lambda( A) $ reduces to the relations
\[
q R_{\lambda q}({A}) {B}\subseteq {B}R_\lambda(-{A})  , \quad q R_{\lambda q}(-{A}) {B}\subseteq {B}R_\lambda({A}).
\]
Since $R_z(-{A}) = -R_{-z}({A})$ and ${\sf{q}}:=-q$, the latter equalities are equivalent to
\[
{\sf{q}}  R_{(-\lambda) {\sf{q}}}({A}) {B}\subseteq {B}R_{(-\lambda)}({A})  , \quad {\sf{q}} R_{\lambda {\sf{q}}}({A}){B} \subseteq {B}R_\lambda({A}).  
\]
But these relations follow  from Proposition \ref{weaksanserifrel},  now applied to ${\sf{q}}:=-q$.
  \end{proof}

\section{Characterizations of Classes $\cC_0$ and $\cC_{1}$ by  Weak Resolvent Identities}\label{characterizations}

Recall that $A$ and $B$ always denote self-adjoint operators acting on a Hilbert space $\Hh$ and that the linear subspace $\cD_q(A,B)$ was defined in Definition \ref{defsete1}.
 
Let $\{A,B\}$ be a pair of class $\cC_0$. If $\lambda >0$, then $-\lambda$ is not in the sector $\cS(q)^+$, so the weak resolvent identity~  
$q R_{-\lambda q}(A)B\subseteq BR_{-\lambda}(A)$~ holds by Theorem \ref{weakclasscandc1}(i). Similarly, if $\mu\in \R \ii$, $\mu\neq 0$, and $|\theta|<\frac{\pi}{2}$, then $\mu\notin\cS(q)^+$ and hence~ $q R_{\mu q}(A)B\subseteq BR_{\mu}(A)$. The following two theorems state some converses of these assertions.

\begin{tht}\label{charpositive}
Let $A$ is a positive operator such that ${\rm ker}~A=\{0\}$. Suppose that 
$0<|\theta_0| <\pi$  and  the domain $\cD_q(A,B)$ 
is  a core for  $B$.  Assume that
\begin{align*}
q R_{-\lambda q}(A)B\subseteq  B R_{-\lambda}(A)~~~{\rm for}~~~~\lambda > 0.
\end{align*}
Then the pair  $\{A,B\}$ is an orthogonal  direct sum %$\{A=A_0\oplus A_1,B=0 \oplus B_1\} $ 
of a trivial representation $\{A_2,0\}$ and a pair $\{A_0,B_0\}$ of the class $\cC_0$.
 \end{tht}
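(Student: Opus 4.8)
The plan is to show that the hypothesis $q R_{-\lambda q}(A)B\subseteq B R_{-\lambda}(A)$ for all $\lambda>0$ forces the ``modulus relation'' $|A|^{\ii t}B\subseteq e^{\theta_0 t}B|A|^{\ii t}$, after which Proposition \ref{wellbehaveuaub} (with $k=0$) finishes the job up to the splitting off of a trivial summand. Since $A$ is positive with trivial kernel, $A=|A|$ and $U_A=I$, so the second condition in the definition of $\cC_0$ is automatic; but $A$ need not itself have trivial range issues — the point is that $A$ could fail to be injective only if we allowed $0$ in the spectrum, which is excluded. The only subtlety is that $B$ itself need not have trivial kernel, so one first passes to the reduction $\Hh=\ker B\oplus(\ker B)^\perp$: on $\ker B$ the pair is the trivial representation $\{A_2,0\}$ (one checks $\ker B$ reduces $A$ using the weak resolvent identity, since $R_{-\lambda}(A)$ maps $\ker B$ into $\ker B$), and on the complement we are in the situation $\ker A=\ker B=\{0\}$ where Proposition \ref{wellbehaveuaub} applies.

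The heart of the argument is to derive $|A|^{\ii t}B\subseteq e^{\theta_0 t}B|A|^{\ii t}$ from the family of weak resolvent identities. Here I would use the Balakrishnan representation of complex powers from Proposition \ref{imaginarypower}: for a positive self-adjoint $A$ with $\ker A=\{0\}$ and $0<\varepsilon<1$, $t\in\R$, one has $A^{\varepsilon+\ii t}f=\frac{\sin(\varepsilon+\ii t)\pi}{\pi}\int_0^\infty\lambda^{\varepsilon+\ii t-1}(A+\lambda I)^{-1}Af\,d\lambda$, i.e. an integral involving the resolvents $R_{-\lambda}(A)=-(A+\lambda I)^{-1}$. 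The strategy: take $f\in\cD_q(A,B)\cap\cD(A^{\varepsilon+\ii t})$, apply $B$ under the integral sign (justified because $B$ is closed and the integrand lies in $\cD(B)$ with controllable $L^2$ bounds, using $B R_{-\lambda}(A)Af = q R_{-\lambda q}(A)BAf$ from the hypothesis together with a second application giving $BR_{-\lambda}(A)(\,\cdot\,)$), and recognize the resulting integral as the Balakrishnan integral for the \emph{rotated} operator $qA=e^{-\ii\theta_0}A$ applied to $Bf$. By Proposition \ref{imaginarypower} this rotated power equals $e^{-\ii\theta_0\varepsilon}e^{\theta_0 t}A^{\varepsilon+\ii t}$ times $Bf$, so one gets $A^{\varepsilon+\ii t}Bf = e^{-\ii\theta_0\varepsilon}e^{\theta_0 t}\,B A^{\varepsilon+\ii t}f$ on a dense set. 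Letting $\varepsilon\to+0$ and using Lemma \ref{limitvaresplion} to kill the factor $e^{-\ii\theta_0\varepsilon}\to 1$ (and to pass $A^\varepsilon\to I$ on the relevant vectors), one obtains $A^{\ii t}Bf=e^{\theta_0 t}BA^{\ii t}f$, hence the desired inclusion $|A|^{\ii t}B\subseteq e^{\theta_0 t}B|A|^{\ii t}$ after checking the domain statement via closedness of $B$ and the fact that $\cD_q(A,B)$ is a core for $B$.

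The main obstacle I anticipate is the rigorous justification of interchanging $B$ with the improper Riemann integral, and more precisely verifying that the hypothesis — which is only an \emph{inclusion} $q R_{-\lambda q}(A)B\subseteq B R_{-\lambda}(A)$, valid a priori on $\cD(B)$ — propagates to the vectors $(A+\lambda I)^{-1}Af$ that appear in the Balakrishnan integrand and yields uniform-in-$\lambda$ $L^2$ estimates good enough for dominated convergence. One clean way around this: instead of integrating, iterate the weak resolvent identity to get, for $f\in\cD_q(A,B)$, that $BR_{-\lambda}(A)f\in\cD(B)$ and $B^2 R_{-\lambda}(A)$-type expressions stay controlled, so that the integrand $\lambda^{\varepsilon+\ii t-1}(A+\lambda I)^{-1}ABf$ is a continuous $L^2$-valued function with an integrable majorant (using $\|(A+\lambda I)^{-1}ABf\|\le\min(\|Bf\|,\lambda^{-1}\|ABf\|)$ and the boundedness of $R_{-\lambda q}(A)$ on the relevant sector); then the closedness of $B$ permits pulling $B$ out of the integral. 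Once this analytic point is dispatched, everything else — the identification with the rotated Balakrishnan operator, the $\varepsilon\to 0$ limit, the reduction by $\ker B$, and the appeal to Proposition \ref{wellbehaveuaub} — is routine. I would also note at the end that since $A\ge 0$, Remark 2 after Definition \ref{classc1} already rules out a $\cC_1$ summand, which is consistent with the conclusion asserting only a $\cC_0$ part plus a trivial part.
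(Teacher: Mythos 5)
Your proposal follows essentially the same route as the paper's proof: the weak resolvent identity on the negative half-line is fed into Balakrishnan's integral for $A^{\varepsilon+\ii t}$, the closed operator $B$ is pulled through the integral (your dominated-convergence estimates make this more explicit than the paper does) to identify the result as the corresponding power of a rotated operator, and the limit $\varepsilon\to+0$ together with the core property of $\cD_q(A,B)$ and the splitting off of $\ker B$ gives the conclusion — the paper merely performs the $\ker B$ reduction after deriving the modulus relation and concludes directly from $U_A=I$ rather than via Proposition \ref{wellbehaveuaub}, which is immaterial. One small correction: since $qR_{-\lambda q}(A)=(\overline{q}A+\lambda I)^{-1}$, the rotated operator is $\overline{q}A=e^{\ii\theta_0}A$ rather than $qA$ (and $R_{-\lambda}(A)=+(A+\lambda I)^{-1}$), although the relation you then record, $A^{\varepsilon+\ii t}Bf=e^{-\ii\theta_0\varepsilon}e^{\theta_0 t}BA^{\varepsilon+\ii t}f$, is the correct one and matches the paper's.
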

\begin{proof}
Let $f\in \cD_q(A,B).$ 
Cleary, the positive self-adjoint operator $A$ and the  normal operator  $\overline{q}A$ (because of $q=e^{-\ii \theta_0}$ with $|\theta_0|<\pi$) satisfy the assumptions of Proposition \ref{imaginarypower}.  By the definition of $ \cD_q(A,B)$, the vectors $f$ and  $Bf$ are in $\cD(A)$, so formula (\ref{imaginarypowerfor}) applies to the operator $T=A$ and the vector $Bf$  and also to the operator $T=\overline{q}A$  and the vector $f$. The assumptions  $q R_{-\lambda q}(A)B\subseteq  B R_{-\lambda }(A)$ and $ABf=qBAf$ imply that $$(\overline{q}A+\lambda  I)^{-1}(\overline{q}A)Bf=B(A+\lambda I)^{-1}Af.$$
Next we apply Proposition \ref{imaginarypower} to the  operator $T=\overline{q}A$. Since $\overline{q}=e^{\ii \theta_0}$ with $|\theta_0|<\pi$, the assumptions of Proposition \ref{imaginarypower} are fulfilled.
Interchanging the closed operator $B$ and the integral in formula (\ref{imaginarypowerfor})  (by considering the integral as a limit of $\Hh$-valued Riemann sums)  we therefore obtain
\begin{align}\label{a1+it}
(\overline{q}A)^{\varepsilon+\ii t} B f= B A^{\varepsilon+\ii t}f~~\quad~~~~{\rm for}~~~f\in \cD_q(A,B),~~t\in \R.
\end{align}
By the first equality in (\ref{imaginarypowerfor}) and the relation $A^{\varepsilon+\ii t} f= A^\varepsilon  A^{\ii t} f$ we have
\begin{align*}
(\overline{q}A)^{\varepsilon+\ii t}Bf= (e^{\ii  \theta_0} A)^{\varepsilon+\ii t} Bf = e^{\ii \varepsilon \theta_0} e^{-\theta_0 t} \, A^{\ii t} A^\varepsilon Bf,
\end{align*} 
so  by (\ref{a1+it}) we obtain 
\begin{align}\label{rrlaion}  
 e^{\ii \varepsilon \theta_0} e^{-\theta_0 t} \, A^{\ii t} A^\varepsilon B f=B A^\varepsilon A^{\ii t}f.
\end{align} 
Recall that $f\in \cD(A)$ and $A^{\ii t} Bf\in \cD(A)$ by the assumption $f \in \cD_q(A,B)$. Passing to the limit $\varepsilon \to +0$ in (\ref{rrlaion}) by using Lemma \ref{limitvaresplion} and the fact that the operator $B$ is closed it follows that $ A^{\ii t}B f=  e^{\theta_0 t} B\, A^{\ii t} f$ for all $f\in \cD_q(A,B)$.   Since $\cD_q(A,B)$ is a core for $B$ by assumption, we conclude that $A^{\ii t}B \subseteq e^{\theta t} BA^{\ii t}$ for all $t\in \R$. The latter implies that $A^{\ii t}$ leaves the closed subspace $\Hh_2:={\rm ker}~B$ invariant. Hence $\Hh_2$ is reducing for $A$ and $B$, so we have $B =0\oplus B_0$ and $A=A_2\oplus A_0$ on $\Hh=\Hh_2 \oplus \Hh_2^\perp$ such that ${\rm ker}~ B_0=\{0\}$ and\, $(A_0)^{\ii t}B_0 \subseteq e^{\theta_0 t} B_0(A_0)^{\ii t}$\, for  $t\in \R$. Since $A\geq 0$ and ${\rm ker}~A=\{0\}$,  the pair  $\{A_0,B_0\}$ belongs to $\cC_0$.
\end{proof}

\begin{tht}\label{chargenerala}
Suppose that\, $0<|\theta_0| <\frac{\pi}{2}$ and ${\rm ker}~A=\{0\}$. Assume that 
$\cD_{q^2}(A^2,B)$  is a core for  $B$ and 
\begin{align*} 
q R_{\mu \ii q}(A)B\subseteq  B R_{\mu\ii}(A) ~\quad~~~{\rm  for}~~~ \mu \in \R, \mu\neq 0.
\end{align*}
 Then   $\{A,B\}$ is an orthogonal sum of a trivial representation  $\{A_2,0\}$ and a pair $\{A_0,B_0\}$ such that $\{|A_0|,B_0\}$ belongs to the class $ \cC_0$.
If in addition $\cD_q(A,B)$ is a core for $B$, then $\{A_0,B_0\}$ is a pair of the class $ \cC_0 $.
 \end{tht}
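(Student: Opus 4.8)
The plan is to mimic the structure of the proof of Theorem~\ref{charpositive}, but now working with the positive operator $A^2$ in place of $A$ and using the hypothesis that $\cD_{q^2}(A^2,B)$ is a core for $B$. First I would observe that the assumption $q R_{\mu\ii q}(A)B\subseteq B R_{\mu\ii}(A)$ for $\mu\in\R\setminus\{0\}$, together with $0<|\theta_0|<\frac\pi2$, can be iterated: replacing $\mu\ii$ by $\mu\ii q$ one gets $q R_{\mu\ii q^2}(A)B\subseteq B R_{\mu\ii q}(A)$ (here one must check $\mu\ii$, $\mu\ii q$, $\mu\ii q^2$ all lie in $\rho(A)$, which holds because $|\theta_0|<\frac\pi2$ keeps these three points off the positive and negative imaginary half-lines where $\mathrm{spec}(A)$ — real — cannot reach them anyway; more carefully, since $A$ is self-adjoint these points are in $\rho(A)$ as soon as they are non-real, and $q^2\ne1$ with $|\theta_0|<\frac\pi2$ ensures $\mu\ii q$ and $\mu\ii q^2$ are non-real). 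Composing the two inclusions gives $q^2 R_{\mu\ii q^2}(A^2)B=q^2 R_{-\mu^2 q^2}(A^2)B\subseteq B R_{-\mu^2}(A^2)=B R_{\mu\ii}(A)R_{\mu\ii q\,^{-1}}\dots$ — more cleanly, from $q R_{\mu\ii q}(A)B\subseteq BR_{\mu\ii}(A)$ and $q R_{\mu\ii q^2}(A)B\subseteq BR_{\mu\ii q}(A)$ one multiplies resolvents to obtain $q^2 R_{\nu q^2}(A^2)B\subseteq BR_\nu(A^2)$ where $\nu=-\mu^2$ runs over $(-\infty,0)$, i.e. the weak $A^2$-resolvent form $q^2 R_{-\lambda q^2}(A^2)B\subseteq BR_{-\lambda}(A^2)$ holds for all $\lambda>0$.

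Next I would apply Theorem~\ref{charpositive} to the positive self-adjoint operator $A^2$ (which has trivial kernel since $\ker A=\{0\}$), with $q$ replaced by $q^2=e^{-2\ii\theta_0}$ and $\theta_0$ replaced by $2\theta_0$; the hypothesis $0<|2\theta_0|<\pi$ is exactly $0<|\theta_0|<\frac\pi2$, the core hypothesis $\cD_{q^2}(A^2,B)$ core for $B$ is assumed, and the weak resolvent inclusion was just established. Theorem~\ref{charpositive} then yields a decomposition $\Hh=\Hh_2\oplus\Hh_2^\perp$ reducing both $A$ and $B$ with $B=0\oplus B_0$, $A^2=A_2^2\oplus A_0^2$, $\ker B_0=\{0\}$, and $(A_0^2)^{\ii t}B_0\subseteq e^{2\theta_0 t}B_0(A_0^2)^{\ii t}$ for $t\in\R$. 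Since $(A_0^2)^{\ii t}=|A_0|^{2\ii t}$, rescaling $t\mapsto t/2$ gives $|A_0|^{\ii t}B_0\subseteq e^{\theta_0 t}B_0|A_0|^{\ii t}$ for all $t\in\R$; with $\ker A_0=\ker B_0=\{0\}$ this is precisely the statement that $\{|A_0|,B_0\}$ belongs to $\cC_0$, proving the first assertion.

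For the final sentence I would invoke Proposition~\ref{wellbehaveuaub} with $k=0$: we have $\ker A_0=\ker B_0=\{0\}$, the relation $|A_0|^{\ii t}B_0\subseteq e^{\theta_0 t}B_0|A_0|^{\ii t}$ just proved, and the extra hypothesis that $\cD_q(A,B)$ is a core for $B$ — which descends to the statement that $\cD_q(A_0,B_0)$ is a core for $B_0$ once one checks that $\Hh_2^\perp$ reduces the situation (the trivial summand $\{A_2,0\}$ contributes $\cD_q(A_2,0)=\cD(A_2)$ entirely inside $\Hh_2$, so $\cD_q(A,B)=\cD(A_2)\oplus\cD_q(A_0,B_0)$ and coreness for $B=0\oplus B_0$ forces coreness of $\cD_q(A_0,B_0)$ for $B_0$). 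Proposition~\ref{wellbehaveuaub} then gives that $\{A_0,B_0\}$ is a pair of the class $\cC_0$, as claimed.

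The main obstacle is the bookkeeping in the first paragraph: one must verify carefully that all the shifted spectral parameters $\mu\ii q^j$ ($j=0,1,2$) lie in $\rho(A)$ under the standing hypothesis $0<|\theta_0|<\frac\pi2$ — equivalently that they are non-real — and then compose the two one-step weak resolvent inclusions into a single weak $A^2$-resolvent inclusion by a resolvent-product manipulation, using $R_{\mu\ii q}(A)R_{-\mu\ii q}(A)=\overline q R_{-\mu^2 q^2}(A^2)$ up to the right constants. The argument producing the weak $A^2$-resolvent form is essentially the content of Corollary~\ref{da2core}, with the roles of $A$ and $B$ interchanged; once the parameters are pinned down it is routine, and the remaining two paragraphs are direct applications of results already in hand.
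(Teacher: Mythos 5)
Your overall strategy --- establish the weak $A^2$-resolvent identity on the negative half-line, apply Theorem~\ref{charpositive} to the pair $\{A^2,B\}$ with deformation parameter $q^2=e^{-2\ii\theta_0}$ (so that $0<|2\theta_0|<\pi$), halve the exponent to get $|A_0|^{\ii t}B_0\subseteq e^{\theta_0 t}B_0|A_0|^{\ii t}$, and finish with Proposition~\ref{wellbehaveuaub} --- is exactly the paper's. But your first step, as written, does not work. The hypothesis supplies the inclusion $qR_{\lambda q}(A)B\subseteq BR_{\lambda}(A)$ only for $\lambda$ on the imaginary axis; ``replacing $\mu\ii$ by $\mu\ii q$'' to obtain $qR_{\mu\ii q^2}(A)B\subseteq BR_{\mu\ii q}(A)$ is not an instance of the hypothesis, since $\mu\ii q$ is not purely imaginary when $q\neq\pm1$. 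Checking that $\mu\ii q$ and $\mu\ii q^2$ lie in $\rho(A)$ is beside the point: what is missing is the resolvent \emph{inclusion} at the rotated parameter, and nothing in the assumptions provides it. The analogy with Corollary~\ref{da2core} is also misleading: that corollary squares the operator \emph{outside} the resolvent by composing inclusions at $q$-shifted parameters, whereas here you must square the operator \emph{inside} the resolvent, which requires pairing a parameter $\nu$ with $-\nu$. The correct derivation --- the paper's --- uses the hypothesis at both $+\mu$ and $-\mu$ (both real, hence covered) together with $R_{\nu}(T)R_{-\nu}(T)=R_{\nu^2}(T^2)$, namely
\[
q^2R_{-\mu^2q^2}(A^2)B=q^2R_{\mu\ii q}(A)R_{-\mu\ii q}(A)B\subseteq qR_{\mu\ii q}(A)BR_{-\mu\ii}(A)\subseteq BR_{\mu\ii}(A)R_{-\mu\ii}(A)=BR_{-\mu^2}(A^2).
\]
Your closing remark about the product $R_{\mu\ii q}(A)R_{-\mu\ii q}(A)$ shows you are close to this, but the argument you actually wrote routes through the unjustified iterated inclusion.

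A second, smaller gap: you assert that the decomposition along $\Hh_2=\ker B$ reduces $A$, but Theorem~\ref{charpositive} applied to $\{A^2,B\}$ only yields that $\Hh_2$ reduces $A^2$, and a subspace reducing $A^2$ need not reduce $A$. The paper handles this \emph{before} passing to $A^2$: the hypothesis shows directly that $R_{\mu\ii}(A)$ and its adjoint $R_{-\mu\ii}(A)$ leave $\ker B$ invariant, so $\ker B$ reduces $A$ itself; you need this observation before writing $A=A_2\oplus A_0$. The remaining two paragraphs of your proposal (halving the exponent, the identification with $\cC_0$ via the trivial phase of $|A_0|$, and the descent of the core hypothesis to the summand for Proposition~\ref{wellbehaveuaub}) are correct and agree with the paper.
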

 \begin{proof}
From the relation\, $q R_{\mu \ii q}(A)B\subseteq  B R_{\mu\ii}(A)$\, it follows that each resolvent $R_{\mu\ii}(A)$ and its adjoint $R_{-\mu\ii}(A)$ leaves the  closed linear subspace $\Hh_2:={\rm ker}~B$ invariant. This implies that $\Hh_2$ is a reducing subspace for $R_{\mu\ii}(A)$ and therefore for the operator $A$. Obviously, $\Hh_2$ reduces $B$. Hence the pair $\{A,B\}$ is an orthogonal sum of a trivial representation $\{A_2,0\}$ and a pair $\{A_0,B_0\}$ such that ${\rm ker}~ B_0=\{0\}$. For notational simplicity let us assume already that ${\rm ker}~ B=\{0\}$. Our aim is to prove that $\{A,B\}$ is in  $\cC_{0}$.

 First we recall a simple operator-theoretic fact: If $T$ is a closed  operator such that $\nu, -\nu \in \rho(T)$, then $\nu^2 \in \rho(T^2)$ and
\begin{align}\label{residentity}
 R_{\nu^2} (T^2) = R_{\nu} (T)  R_{-\nu}(T)= \frac1{2\nu} (R_{\nu} (T) - R_{-\nu}(T)).
\end{align}

Suppose now that $\lambda >0$. Putting  $\mu = \sqrt{\lambda}$, we have $(\ii\mu) ^2= -\lambda$.
Let $f\in \cD(B)$. Using the identity (\ref{residentity}) twice,  for  $\nu=\mu \ii q$ and for $\nu=\mu \ii $, and the assumptions $qR_{\pm\mu \ii q}(A)B\subseteq  B R_{\pm\mu\ii}(A)$, we obtain  
\begin{align}\label{reslab}
 q^2 R_{-\lambda q^2}(A^2)B f &= \frac {q^2} {2\mu \ii q} ~ (R_{\mu \ii q} (A) - R_{-\mu \ii q }(A))Bf\nonumber \\&= \frac {1} {2\mu\ii}~ B (R_{\mu \ii} (A) - R_{-\mu \ii}(A))f =  B R_{-\lambda }(A^2) f .
\end{align}
Thus, since\, $q^2=e^{-2\ii \theta_0}$,\, $|2 \theta_0|< \pi$\, and\, $\cD_{q^2}(A^2,B)$ is a core for  $B$  by assumption, the pair $\{A^2,B\}$ satisfies all assumptions of Theorem \ref{charpositive} for the relation $A^2B=q^2BA^2$. Therefore, by this theorem we have~ $(A^2)^{\ii t}B \subseteq e^{2 \theta_0 t} B(A^2)^{\ii t}$ for $t\in \R$, so that\, $|A|^{\ii s}B \subseteq e^{ \theta_0 s} B|A|^{\ii s}$ for $s\in \R$. 
Hence the pair $\{|A|,B\}$ belongs to  $\cC_0$ (see e.g. Remark 2 in Section \ref{twoclasses}). 
Further, if in addition $\cD_{q}(A,B)$ is a core for  $B$,   
it follows  from Proposition \ref{wellbehaveuaub} that $\{A,B\}$ is in the class $ \cC_0 $.
\end{proof}

The next theorem contains a characterization of the class $\cC_{1}$. Recall that  for the class $\cC_1$ the weak resolvent relation (\ref{weakabid1}) holds  for $\lambda \in \C \backslash \cS(-q)$   by Theorem \ref{weakclasscandc1}(ii).

\begin{tht}\label{charclassc1sminusq} 
Suppose that $0<|\theta_0|< \pi$ and ${\rm ker}\, A={\rm ker}\, B=\{0\}$. Assume that both domains
$\cD_{q^2}(A^2,B)$ and $\cD_q(A,B)$ are cores for the operator  $B$ and that there exists a number $p\in \C\backslash \cS(-q)$ such that
\begin{align*}
q R_{\mu p q }(A)B\subseteq  B R_{\mu p}(A)\quad {\rm for ~all}\quad  \mu \in \R , \mu \neq 0. 
\end{align*}  
Then the pair $\{A,B\}$ belongs to the class $\cC_{1}$.
\end{tht}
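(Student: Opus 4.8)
The plan is to reduce the statement to the \emph{positive} operator $A^2$ and then invoke Proposition~\ref{wellbehaveuaub}. Since $A^2=|A|^2\ge 0$ has trivial kernel and $|A|^{\ii s}=(A^2)^{\ii s/2}$, it suffices to establish the single relation
\[
(A^2)^{\ii t}B\subseteq e^{2\theta_1 t}\,B(A^2)^{\ii t}\quad(t\in\R).
\]
Indeed, this gives $|A|^{\ii t}B\subseteq e^{\theta_1 t}B|A|^{\ii t}$ for all $t$, whereupon Proposition~\ref{wellbehaveuaub} with $k=1$ — whose remaining hypotheses, $\ker A=\ker B=\{0\}$ and that $\cD_q(A,B)$ is a core for $B$, are precisely the ones we are given — shows that $\{A,B\}$ belongs to $\cC_1$. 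So the whole proof reduces to the displayed relation for $A^2$, which I would obtain by the method of Theorem~\ref{charpositive}, the difference being that the weak resolvent relation for $\{A^2,B\}$ is available on a rotated ray instead of on the negative real axis.

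First I would pass from the hypothesis to a weak $A^2$-resolvent relation. For $s>0$, applying the resolvent identity \eqref{residentity} to the self-adjoint operator $A$ once with $\nu=spq$ and once with $\nu=sp$ and using $qR_{\pm spq}(A)B\subseteq BR_{\pm sp}(A)$, one computes, for $f\in\cD(B)$,
\begin{align*}
q^2R_{s^2p^2q^2}(A^2)Bf&=\tfrac{q}{2sp}\bigl(R_{spq}(A)-R_{-spq}(A)\bigr)Bf\\
&=\tfrac{1}{2sp}\,B\bigl(R_{sp}(A)-R_{-sp}(A)\bigr)f=BR_{s^2p^2}(A^2)f .
\end{align*}
Here one uses that $p\notin\cS(-q)$ forces $p\notin\R$ and $pq\notin\R$, so that $\pm sp,\pm spq\in\rho(A)$ while $s^2p^2,s^2p^2q^2\notin[0,\infty)\supseteq\sigma(A^2)$; hence all these resolvents exist. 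As $s$ runs over $(0,\infty)$ the point $\kappa:=s^2p^2$ runs over the ray $\R_{+}p^2$, so this proves $q^2R_{\kappa q^2}(A^2)B\subseteq BR_{\kappa}(A^2)$ for all $\kappa\in\R_{+}p^2$.

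Next I would run the Balakrishnan argument of Theorem~\ref{charpositive} along this ray. Write $\kappa=-\lambda c$ with $\lambda>0$, where $c$ is the unimodular number with $-c=p^2/|p^2|$, and put $M:=\bar c\,\overline{q^2}\,A^2$ and $N:=\bar c\,A^2$, two normal operators of the form $M=e^{\ii\vartheta_M}A^2$, $N=e^{\ii\vartheta_N}A^2$ with $\vartheta_M=\arg(\bar c\,\overline{q^2})$, $\vartheta_N=\arg(\bar c)$ (principal branch). Since $R_{\kappa}(A^2)=\bar c(N+\lambda I)^{-1}$ and $q^2R_{\kappa q^2}(A^2)=\bar c(M+\lambda I)^{-1}$, the relation just obtained reads $(M+\lambda I)^{-1}B\subseteq B(N+\lambda I)^{-1}$; multiplying through and inserting $A^2Bf=q^2BA^2f$ for $f\in\cD_{q^2}(A^2,B)$ — by the same manipulation as in the proof of Theorem~\ref{charpositive} — yields
\[
(M+\lambda I)^{-1}MBf=B(N+\lambda I)^{-1}Nf,\qquad \lambda>0,\ f\in\cD_{q^2}(A^2,B).
\]
I would then check that $p\notin\cS(-q)$ guarantees $|\vartheta_M|,|\vartheta_N|<\pi$ — so that Proposition~\ref{imaginarypower} applies to $M$ and $N$ — and, crucially, that $\vartheta_M-\vartheta_N=2\theta_1$. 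Feeding the integral formula of Proposition~\ref{imaginarypower} into the last display, pulling the closed operator $B$ through the integral (as a limit of $\Hh$-valued Riemann sums) to get $M^{\varepsilon+\ii t}Bf=BN^{\varepsilon+\ii t}f$, and letting $\varepsilon\to+0$ (Lemma~\ref{limitvaresplion}, $B$ closed) one obtains $(A^2)^{\ii t}Bf=e^{(\vartheta_M-\vartheta_N)t}B(A^2)^{\ii t}f=e^{2\theta_1 t}B(A^2)^{\ii t}f$ for $f\in\cD_{q^2}(A^2,B)$; since $\cD_{q^2}(A^2,B)$ is a core for $B$, this upgrades to the operator inclusion $(A^2)^{\ii t}B\subseteq e^{2\theta_1 t}B(A^2)^{\ii t}$, and the proof finishes via Proposition~\ref{wellbehaveuaub} as explained.

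The main obstacle is the angular verification in the last step: one must show, from $p\notin\cS(-q)$, that $\vartheta_M$ and $\vartheta_N$ avoid $\{\pm\pi\}$ (otherwise $\lambda>0$ would meet the spectrum of the rotated operator and Proposition~\ref{imaginarypower} would break down) and, above all, that $\vartheta_M-\vartheta_N$ equals $2\theta_1$ \emph{as a real number}, not $2\theta_0=2\theta_1\pm2\pi$ — the latter choice would produce the class $\cC_0$ rather than $\cC_1$. This is a short case distinction on $\sign\theta_0$ and on the position of $\arg p$ relative to the two sectors making up $\cS(-q)$, using $q^2=e^{-2\ii\theta_1}$; it is of the same flavour as, though a touch more delicate than, the angular bookkeeping behind Proposition~\ref{weaksanserifrel} and Theorem~\ref{weakclasscandc1}. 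The remaining ingredients — the resolvent-identity computation of the first step and the operator algebra of the second — are a routine adaptation of Theorem~\ref{charpositive}.
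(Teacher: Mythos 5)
Your proposal is correct and follows essentially the same route as the paper: squaring the resolvent relation via \eqref{residentity} to pass to $\{A^2,B\}$ on the ray $\R_+p^2$, applying Proposition~\ref{imaginarypower} to the rotated operators $-\overline{p}^2\overline{q}^2A^2$ and $-\overline{p}^2A^2$, letting $\varepsilon\to+0$, and concluding with Proposition~\ref{wellbehaveuaub} for $k=1$. The angular verification you defer (that both rotation angles lie in $(-\pi,\pi)$ and differ by $2\theta_1$ rather than $2\theta_0$) is precisely the case distinction on $\sign\psi$ carried out in the paper's equations \eqref{pqbarsquare1}--\eqref{pqbarsquare2}, after normalizing $p$ to lie, with modulus one, in the component of $\C\setminus\cS(-q)$ between the positive axis and the ray through $\overline{q}$.
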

\begin{proof}
Without loss of generality we can choose the number $p\in \C\backslash \cS(-q)$ of modulus one  and contained in the open sector with  angle less than $\pi$ between the positive $x$-axis and the half-line through the origin and $\overline{q}$.  
We  modify some arguments that have been used already in the proofs of Theorems \ref{charpositive} and \ref{chargenerala}.
\begin{figure}[h]
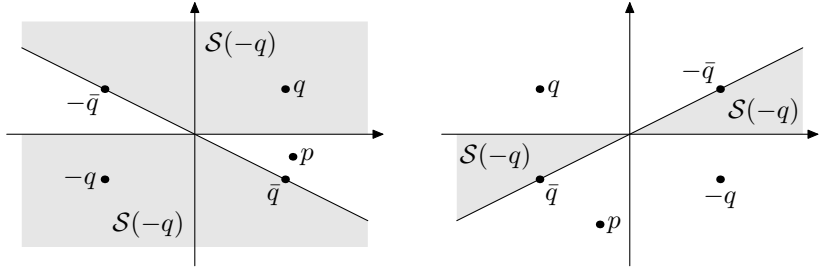

\hfil \includegraphics{003.mps}\qquad
\includegraphics{004.mps}\hfil
\caption{The $\cS(-q)$ sector and admissible ranges for $p$ as ${0<\arg q <\frac{\pi}{2}}$ and as ${\frac{\pi}{2}<\arg q <\pi}$}
\end{figure}

Using the identity (\ref{residentity}) the assumption $q R_{\mu p q }(A)B\subseteq  B R_{\mu p}(A)$ implies that
\begin{align}\label{p2squareres1} q^2 R_{\lambda p^2 q^2}(A^2)B \subseteq  B R_{\lambda p^2 }(A^2),~\lambda >0.
\end{align}
Let $f\in \cD_{q^2}(A^2,B)$.  From (\ref{p2squareres1}) and the relation $A^2Bf=q^2BA^2f$ we derive
\begin{align}\label{p2squareres2} 
(-\overline{p}^2\overline{q}^2A^2+\lambda I)^{-1}(-\overline{p}^2\overline{q}^2A^2)Bf=B(-\overline{p}^2A^2+\lambda I)^{-1}(-\overline{p}^2A^2)f,~\lambda >0.
\end{align}

Our aim is to apply Lemma \ref{imaginarypower}, especially  formula (\ref{imaginarypowerfor})  therein, to the operators\, $-\overline{p}^2\overline{q}^2A^2$\, and\, $-\overline{p}^2A^2$. To fullfill the assumptions of Lemma \ref{imaginarypower} it is crucial  to write the number\, $-\overline{p}^2\overline{q}^2$\, and\, $-\overline{p}^2$\, (of modulus one) in the form $e^{\ii \vartheta}$ with $\vartheta \in (-\pi,\pi)$. Let us write $p$ as $p=e^{\ii \psi}$ with $0<|\psi|<\pi$ and let $s(\psi)$ denote the sign of $\psi$. Note that $q=e^{-\ii \theta}$ and  $|\theta|<\pi$. Also we recall that by definition we have $\theta_1=\theta-\pi$ if $\theta>0$ and $\theta_1=\theta+\pi $ if $\theta <0$.
We shall prove that
\begin{align}\label{pqbarsquare1}
-\overline{p}^2&=e^{\ii(-2\psi +s(\psi)\pi)}\quad {\rm and}\quad -2\psi +s(\psi)\pi\in (-\pi,\pi),\\ -\overline{p}^2\overline{q}^2&= e^{\ii(2\theta_1-2\psi +s(\psi)\pi)}\quad {\rm and}\quad  2\theta_1-2\psi +s(\psi)\pi\in (-\pi,\pi).\label{pqbarsquare2}
\end{align}

First suppose that $\psi >0$. Then $\pi >\theta>\psi>0$ by the  choice of the number $p$. Hence   $-2\psi +\pi \in (-\pi,\pi)$ and $-\overline{p}^2=-(e^{-\ii \psi})^2= e^{\ii(-2\psi +\pi)}$. Further, 
$$2\theta_1-2\psi +\pi=2(\theta-\pi) -2\psi+\pi=2(\theta- \psi) -\pi \in (-\pi,\pi)$$ and 
$-\overline{p}^2\overline{q}^2=-(e^{-\ii \psi})^2(e^{\ii \theta})^2=e^{\ii (2 \theta -2\psi -\pi)}=e^{\ii(2\theta_1-2\psi +\pi)}$. 

Next we treat the case $\psi <0$. Then $0 >\psi >\theta>-\pi$ by the definition  of $p$.
Therefore, $-2\psi -\pi \in (-\pi,\pi)$ and $-\overline{p}^2=-(e^{-\ii \psi})^2= e^{\ii(-2\psi -\pi)}$. Moreover,
$$2\theta_1-2\psi -\pi=2(\theta+\pi) -2\psi-\pi=2(\theta- \psi) +\pi \in (-\pi,\pi)$$ and 
$-\overline{p}^2\overline{q}^2=-(e^{-\ii \psi})^2(e^{\ii \theta})^2=e^{\ii (2 \theta -2\psi +\pi)}=e^{\ii(2\theta_1-2\psi -\pi)}.$
This proves (\ref{pqbarsquare1}) and (\ref{pqbarsquare2}) in both cases.

By (\ref{pqbarsquare1}) and (\ref{pqbarsquare2}) it follows from  the first equality of formula (\ref{imaginarypowerfor}) that
\begin{align}\label{poweraespit1}
(-\overline{p}^2A^2)^{\varepsilon +\ii t}g&=e^{\ii(-2\psi +s(\psi)\pi)\varepsilon}\, e^{-(-2\psi +s(\psi)\pi)t}\, (A^2)^{\varepsilon +\ii t}g,\\
(-\overline{p}^2\overline{q}^2A^2)^{\varepsilon+\ii t}g&=e^{\ii(2\theta_1-2\psi +s(\psi)\pi)\varepsilon}\, e^{-(2\theta_1-2\psi +s(\psi)\pi)t}\, (A^2)^{\varepsilon +\ii t}g \label{poweraespit2}
\end{align}
for any $g\in \cD(A^2)$. Since $f,Bf\in \cD(A^2)$, the second equality of formula (\ref{imaginarypowerfor}) yields
\begin{align}\label{poweraespit3}
(-\overline{p}^2\overline{q}^2A^2)^{\varepsilon+ \ii t}Bf= B(-\overline{p}^2 A^2)^{\varepsilon +\ii t}f.
\end{align}
Inserting (\ref{poweraespit1}) with $g=f$ and (\ref{poweraespit2}) with $g=Bf$ into (\ref{poweraespit3}) and passing to the limit $\varepsilon\to +0$ we obtain
$$
e^{-2\theta_1 t}(A^2)^{\ii t}Bf =B( A^2)^{\ii t} f,~ t\in \R,
$$
and hence~ $|A|^{\ii s}Bf =e^{\theta_1 s}B | A|^{\ii s} f$,\, $s\in \R$,\, for all $f\in \cD_{q^2}(A^2,B)$. Since\, $\cD_{q^2}(A^2,B)$  is  a core for  $B$ by assumption, the latter implies that $|A|^{\ii s}B\subseteq e^{\theta_1 s}B | A|^{\ii s}$ for $s\in \R$. Therefore, since we also assumed that $\cD_q(A,B)$ is  a core for   $B$, the assumptions of Proposition \ref{wellbehaveuaub} are fulfilled with $k=1$, so the pair $\{A,B\}$ belongs to the class $\cC_{1}$.
\end{proof}
Related characterizations of  the class $\cC_1$ can be also given by requiring the weak resolvent relation for $|A|$ rather than $A$. The following theorem is a sample of such a result.

\begin{tht}\label{samplec1} Suppose that\, $0<|\theta_0|< \pi$ and ${\rm ker}\, A= {\rm ker}\, B=\{0\}$. Suppose  that the domains $\cD_q(A,B)$ 
and $\cD_{-q}(|A|,B)$  are  cores for  $B$~  and~~
\begin{align*} 
-q R_{\lambda q} (|A|)B\subseteq B R_{-\lambda} (|A|)\quad {\rm for}\quad  \lambda >0.
\end{align*}
Then the pair $\{A,B\}$ is in $\cC_1$. 
\end{tht}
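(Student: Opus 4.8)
The plan is to reduce the statement to Theorem~\ref{charpositive} applied to the positive operator $|A|$, followed by Proposition~\ref{wellbehaveuaub}. Put $\widehat q:=-q$; by \eqref{qtheta0} and the definition of $\theta_1$ in Section~\ref{twoclasses} we have $\widehat q=e^{-\ii\theta_1}$ with $0<|\theta_1|<\pi$ (if $\theta_0\in(0,\pi)$ then $\theta_1=\theta_0-\pi\in(-\pi,0)$, and if $\theta_0\in(-\pi,0)$ then $\theta_1=\theta_0+\pi\in(0,\pi)$). Since $|A|\ge0$ and $q=e^{-\ii\theta_0}$ with $0<|\theta_0|<\pi$, for every $\lambda>0$ both $-\lambda$ and $-\lambda\widehat q=\lambda q$ lie in $\rho(|A|)$, so the hypothesis
\[
-q\,R_{\lambda q}(|A|)B\subseteq B\,R_{-\lambda}(|A|),\qquad \lambda>0,
\]
is exactly the weak $|A|$-resolvent form $\widehat q\,R_{-\lambda\widehat q}(|A|)B\subseteq B\,R_{-\lambda}(|A|)$, $\lambda>0$, of the relation $|A|B=\widehat q\,B|A|$.

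Next I would verify the remaining hypotheses of Theorem~\ref{charpositive} for the pair $\{|A|,B\}$ and the number $\widehat q$: the operator $|A|$ is positive with $\ker|A|=\ker A=\{0\}$, one has $0<|\theta_1|<\pi$, and $\cD_{\widehat q}(|A|,B)=\cD_{-q}(|A|,B)$ is a core for $B$ by assumption. Theorem~\ref{charpositive} (applied with $\theta_0$ replaced by $\theta_1$) then shows that $\{|A|,B\}$ is an orthogonal direct sum of a trivial pair $\{(|A|)_2,0\}$, supported on $\ker B$, and a pair of the corresponding class $\cC_0$. Since $\ker B=\{0\}$ the trivial summand is absent, and we obtain
\[
|A|^{\ii t}B\subseteq e^{\theta_1 t}\,B\,|A|^{\ii t}\qquad{\rm for~all}~~t\in\R;
\]
equivalently, this relation can be read off directly from the first part of the proof of Theorem~\ref{charpositive}, before the reducing subspace $\ker B$ is split off.

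Finally, since $\ker A=\ker B=\{0\}$, the domain $\cD_q(A,B)$ is a core for $B$ by hypothesis, and \eqref{modabrel} now holds with $k=1$, Proposition~\ref{wellbehaveuaub} applies and yields that $\{A,B\}$ is a pair of the class $\cC_1$, which is the assertion.

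I do not expect a genuine obstacle here: the argument is essentially the parameter substitution $(q,\theta_0)\mapsto(-q,\theta_1)$ together with $A\mapsto|A|$, after which the hard analytic work---the Balakrishnan fractional-power manipulations---has already been done inside Theorem~\ref{charpositive}. The one point that must not be overlooked is that the hypothesis $\ker B=\{0\}$ is used precisely to discard the trivial summand produced by Theorem~\ref{charpositive}, so that \eqref{modabrel} holds with $k=1$ and Proposition~\ref{wellbehaveuaub} can indeed be invoked.
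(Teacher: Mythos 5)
Your proof is correct and follows essentially the same route as the paper: reduce to Theorem~\ref{charpositive} for the pair $\{|A|,B\}$ with $q$ replaced by $-q=e^{-\ii\theta_1}$, obtain $|A|^{\ii t}B\subseteq e^{\theta_1 t}B|A|^{\ii t}$, and conclude via Proposition~\ref{wellbehaveuaub} with $k=1$. The only difference is that you spell out the elimination of the trivial summand using $\ker B=\{0\}$, which the paper leaves implicit.
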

\begin{proof}
The  assumptions of Theorem \ref{samplec1}  imply that the pair $\{|A|,B\}$ satisfies the assumptions of Theorem \ref{charpositive} for the relation (\ref{qplane1}) with $q=e^{-\ii \theta_0}$ replaced by $-q=e^{-\ii \theta_1}$. Note that $0<|\theta_1|< \pi$, since we assumed that $0<|\theta_0|< \pi$. Therefore, by Theorem \ref{charpositive}, we have $|A|^{\ii t}B \subseteq e^{\theta_1 t}B | A|^{\ii t} $ for all $t\in \R$. Since $\cD_q(A,B)$ is  a core for   $B$, it follows from Proposition \ref{wellbehaveuaub} that the pair $\{A,B\}$ is in $\cC_{1}$.
\end{proof}

The crucial assumption in the preceding   Theorems \ref{charpositive}, \ref{chargenerala}, and \ref{charclassc1sminusq} is that the weak $A$-resolvent identity (\ref{resolventw1}) holds on some line that intersects the critical sector only at the origin. In addition there have been  technical assumptions such as ${\rm ker}~A=\{0\}$ and the requirement that  $\cD_q(A,B)$ resp. $\cD_{q^2}(A^2,B)$ is  a core  for the operator $B$. These technical assumptions can be avoided if we assume in addition that the  weak $B$-resolvent identity (\ref{resolventw1-b}) holds for some points of the resolvent set $\rho(B)$. 
\begin{tht}\label{charpositive1}
Let $A$ is a positive operator. Suppose that  
$0<|\theta_0| <\pi$ and  there exist a number\,   $\nu \in \rho(B)$ such that\, $\nu \overline{q},\, \overline{\nu}\, \overline{q} \in \rho(B)$,  
\begin{align}\label{bresolventid1}
\overline{q}\, R_{\nu \overline{q}}(B)A\subseteq  A R_{\nu }(B)\quad {\rm and}\quad \overline{q}\, R_{\overline{\nu}\, \overline{q} }(B)A\subseteq  A R_{\overline{\nu}}(B).
\end{align}  Assume that
\begin{align}\label{aresolventid1}
q R_{-\lambda q}(A)B\subseteq  B R_{-\lambda}(A)\quad {\rm for~ all}\quad \lambda > 0.
\end{align}
Then the pair  $\{A,B\}$ is an orthogonal  direct sum  
of  trivial representations $\{A_2,0\}$ and $\{0,B_2\}$ and a pair $\{A_0,B_0\}$ of the class $\cC_0$.
\begin{proof}
From\, (\ref{bresolventid1}) it follows that the operator $R_{\nu }(B)$ and its adjoint $R_{\overline{\nu}}(B)$ leave the closed subspace 
$\cG_2:={\rm ker}~A$\, invariant. Therefore,  $\cG_2$ is reducing for $R_\nu (B)$ and hence for $B$. Since $\cG_2\equiv {\rm ker}~A$\, is obviously reducing for $A$, the pair  $\{A,B\}$  decomposes as an orthogonal sum of pairs\, $\{0,B_2\}$\, and\, $\{\tilde{A},\tilde{B}\}$ such that\, ${\rm ker}~\tilde{A}=\{0\}$.\, Further, by Proposition \ref{rldb}(vi), (\ref{bresolventid1}) implies that $\cD_q(A,B)$ is core for $B$. Hence $\cD_q(\tilde{A},\tilde{B})$ is core for $\tilde{B}$. Clearly, (\ref{aresolventid1}) leads to the relation $ R_{-\lambda q}(\tilde{A})\tilde{B}\subseteq  \tilde{B} R_{-\lambda}(\tilde{A})$ for $ \lambda > 0$. Thus, the pair $\{\tilde{A},\tilde{B}\}$ satisfies the assumptions of Theorem \ref{charpositive} which gives the assertion.
\end{proof}

 \end{tht}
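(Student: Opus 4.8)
The plan is to deduce the statement from Theorem \ref{charpositive} by extracting from the two weak $B$-resolvent inclusions \eqref{bresolventid1} two separate pieces of information: that $\cG_2:={\rm ker}~A$ is a reducing subspace, on which $\{A,B\}$ restricts to a trivial summand $\{0,B_2\}$, and that $\cD_q(A,B)$ is a core for $B$ --- the latter being the one hypothesis of Theorem \ref{charpositive} not already assumed here.

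First I would show that $\cG_2$ reduces $B$. The kernel of the self-adjoint operator $A$ reduces $A$ automatically. The inclusion $\overline q\,R_{\nu\overline q}(B)A\subseteq AR_\nu(B)$ means that for each $f\in\cD(A)$ one has $R_\nu(B)f\in\cD(A)$ and $AR_\nu(B)f=\overline q\,R_{\nu\overline q}(B)Af$; specializing to $f\in\cG_2$ gives $AR_\nu(B)f=0$, i.e. $R_\nu(B)\cG_2\subseteq\cG_2$. Since $B$ is self-adjoint, $R_\nu(B)^*=R_{\overline\nu}(B)$, and the second inclusion in \eqref{bresolventid1} yields in the same way $R_{\overline\nu}(B)\cG_2\subseteq\cG_2$. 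Hence $\cG_2$ reduces $R_\nu(B)$, and therefore $B$. Writing $\Hh=\cG_2\oplus\cG_2^\perp$ we obtain $A=0\oplus\tilde A$ and $B=B_2\oplus\tilde B$ with $\tilde A\ge0$ and ${\rm ker}~\tilde A=\{0\}$, the summand $\{0,B_2\}$ being trivial.

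Next I would observe that, setting $\mu:=\nu\overline q$ (so that $\mu,\,\mu q=\nu\in\rho(B)$), the first inclusion of \eqref{bresolventid1} is exactly the weak $B$-resolvent form $R_\mu(B)A\subseteq qAR_{\mu q}(B)$, whence Proposition \ref{rldb}(vi) gives that $\cD_q(A,B)$ is a core for $B$. Since $\cD_q(A,B)$ and $B$ both split along the reducing decomposition --- one has $\cD_q(A,B)=\cD(B_2)\oplus\cD_q(\tilde A,\tilde B)$ --- it follows that $\cD_q(\tilde A,\tilde B)$ is a core for $\tilde B$. Restricting \eqref{aresolventid1} to $\cG_2^\perp$ then yields $qR_{-\lambda q}(\tilde A)\tilde B\subseteq\tilde B R_{-\lambda}(\tilde A)$ for $\lambda>0$; here $-\lambda,\,-\lambda q\in\rho(\tilde A)$ because $\tilde A\ge0$ and $0<|\theta_0|<\pi$ forces $-q\notin[0,\infty)$. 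Thus $\{\tilde A,\tilde B\}$ meets all the hypotheses of Theorem \ref{charpositive}, so $\{\tilde A,\tilde B\}=\{A_2,0\}\oplus\{A_0,B_0\}$ with $\{A_0,B_0\}$ of class $\cC_0$; combining this with the trivial summand $\{0,B_2\}$ gives the asserted decomposition.

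I do not expect a genuine obstacle here: the argument is assembled from short, routine steps, and the only real idea is the observation that \eqref{bresolventid1} simultaneously produces the reducing subspace $\cG_2$ and, through Proposition \ref{rldb}(vi), the core property of $\cD_q(A,B)$ that Theorem \ref{charpositive} needs. The point requiring a little care is the bookkeeping that the core property and the weak $A$-resolvent inclusion descend faithfully to the summand $\{\tilde A,\tilde B\}$ --- essentially the observation that all operators and distinguished domains respect the orthogonal decomposition $\Hh=\cG_2\oplus\cG_2^\perp$.
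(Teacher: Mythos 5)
Your proof is correct and follows essentially the same route as the paper's: deduce from \eqref{bresolventid1} that $\ker A$ reduces $B$ (splitting off the trivial summand $\{0,B_2\}$) and that $\cD_q(A,B)$ is a core for $B$ via Proposition \ref{rldb}(vi), then apply Theorem \ref{charpositive} to the remaining summand. The only difference is that you spell out details the paper leaves implicit, such as the substitution $\mu=\nu\overline q$ and the splitting $\cD_q(A,B)=\cD(B_2)\oplus\cD_q(\tilde A,\tilde B)$.
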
\begin{tht}\label{chargenerala1}
Suppose that\, $0<|\theta_0| <\frac{\pi}{2}$ and there exists a number  $\nu \in \rho(B)$ such that\, $\nu \overline{q},\, \overline{\nu}\, \overline{ q},\, \nu\, \overline{q}^2\in \rho(B)$ and 
\begin{align}\label{bresolventid2}
\overline{q}\, R_{\lambda \overline{q}}(B)A\subseteq  A R_{\lambda}(B)\quad {\rm for}\quad \lambda=\nu \overline{q}\, ,~ \nu,~ \overline{\nu}.
\end{align}
Assume that 
\begin{align*} 
q R_{\mu \ii q}(A)B\subseteq  B R_{\mu\ii}(A) ~\quad~~~{\rm  for~all}\quad \mu \in \R, \mu\neq 0.
\end{align*}
 Then   $\{A,B\}$ is an orthogonal sum of  trivial representations   and a pair $\{A_0,B_0\}$ belonging to the class $ \cC_0$.
 \end{tht}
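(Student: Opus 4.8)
The plan is to imitate the passage from Theorem~\ref{charpositive} to Theorem~\ref{charpositive1}: the weak $B$-resolvent hypotheses~\eqref{bresolventid2} will serve two purposes at once, first to split off the trivial part of $\{A,B\}$ supported on $\ker A$, and then, on the orthogonal complement, to supply for free the two hypotheses about cores for $B$ needed in order to feed the pair into Theorem~\ref{chargenerala}.

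First I would use~\eqref{bresolventid2} for the parameters $\lambda=\nu$ and $\lambda=\overline\nu$. For $f\in\cD(A)$ the inclusion $\overline q\,R_{\lambda\overline q}(B)A\subseteq AR_\lambda(B)$ says that $R_\lambda(B)f\in\cD(A)$ and $AR_\lambda(B)f=\overline q\,R_{\lambda\overline q}(B)Af$; hence $AR_\lambda(B)f=0$ whenever $f\in\cG_2:=\ker A$. So $R_\nu(B)$ and its adjoint $R_{\overline\nu}(B)$ both leave $\cG_2$ invariant, $\cG_2$ reduces $R_\nu(B)$ and therefore $B$, and it trivially reduces $A$. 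Splitting $\{A,B\}=\{0,B_2\}\oplus\{\tilde A,\tilde B\}$ along $\cG_2\oplus\cG_2^\perp$, one gets $\ker\tilde A=\{0\}$, and since all resolvents appearing are bounded and respect the decomposition, $\{\tilde A,\tilde B\}$ again satisfies~\eqref{bresolventid2} (with $\nu,\nu\overline q,\nu\overline q^2\in\rho(\tilde B)$) as well as $qR_{\mu\ii q}(\tilde A)\tilde B\subseteq\tilde B R_{\mu\ii}(\tilde A)$ for real $\mu\neq0$. It therefore suffices to treat the case $\ker A=\{0\}$ and to produce a decomposition $\{A,B\}=\{A_2,0\}\oplus\{A_0,B_0\}$ with $\{A_0,B_0\}$ of class $\cC_0$.

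Next I would rewrite~\eqref{bresolventid2} by carrying the scalar $\overline q$ to the other side: the case $\lambda=\nu$ reads $R_{\nu\overline q}(B)A\subseteq qAR_\nu(B)$, and the case $\lambda=\nu\overline q$ reads $R_{\nu\overline q^2}(B)A\subseteq qAR_{\nu\overline q}(B)$. Putting $\mu:=\nu\overline q^2$, so that $\mu,\mu q,\mu q^2$ equal $\nu\overline q^2,\nu\overline q,\nu$, all of which lie in $\rho(B)$ by hypothesis, these are precisely the two inclusions demanded in Corollary~\ref{da2core}, which then yields that $\cD_{q^2}(A^2,B)$ is a core for $B$. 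Applying Proposition~\ref{rldb}(vi) to $R_{\nu\overline q}(B)A\subseteq qAR_\nu(B)$ (with $\mu=\nu\overline q$, $\mu q=\nu\in\rho(B)$) shows in addition that $\cD_q(A,B)$ is a core for $B$.

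Finally, with $\ker A=\{0\}$, $0<|\theta_0|<\frac{\pi}{2}$, both $\cD_{q^2}(A^2,B)$ and $\cD_q(A,B)$ cores for $B$, and $qR_{\mu\ii q}(A)B\subseteq BR_{\mu\ii}(A)$ for all real $\mu\neq0$, all hypotheses of Theorem~\ref{chargenerala}, including its supplementary one, are in force; that theorem gives $\{A,B\}=\{A_2,0\}\oplus\{A_0,B_0\}$ with $\{A_0,B_0\}$ of class $\cC_0$. Adjoining the summand $\{0,B_2\}$ removed at the outset produces the stated orthogonal decomposition. I expect the only point needing care to be the bookkeeping of the shifts by powers of $q$ that lines up the three instances of~\eqref{bresolventid2} with the hypotheses of Corollary~\ref{da2core} and Proposition~\ref{rldb}(vi); there is no genuinely new analytic obstacle here, since the substantive arguments have already been carried out in Corollary~\ref{da2core}, Proposition~\ref{rldb}, and Theorem~\ref{chargenerala}.
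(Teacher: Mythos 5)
Your proposal is correct and follows essentially the same route as the paper: use \eqref{bresolventid2} at $\lambda=\nu,\overline\nu$ to show $\ker A$ reduces the pair, use \eqref{bresolventid2} at $\lambda=\nu,\nu\overline q$ together with Proposition~\ref{rldb}(vi) and Corollary~\ref{da2core} to get that $\cD_q(A,B)$ and $\cD_{q^2}(A^2,B)$ are cores for $B$, and then invoke Theorem~\ref{chargenerala}. Your bookkeeping of the $q$-shifts (taking $\mu=\nu\overline q^2$ in Corollary~\ref{da2core} and $\mu=\nu\overline q$ in Proposition~\ref{rldb}(vi)) is exactly what the paper leaves implicit.
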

 \begin{proof}
Arguing in a similar manner as in the  proof of Theorem \ref{charpositive1} the assertion is reduced to    
 Theorem \ref{chargenerala}. We sketch only the necessary modifications:  
From the relations  (\ref{bresolventid2}), applied for $\lambda=\nu,\, \overline{\nu}$, it follows  that\, ${\rm ker}~A$\, is reducing for the pair   $\{A,B\}$. By Proposition \ref{rldb}(vi) and Corollary \ref{da2core}, the relations  (\ref{bresolventid2}), applied with $\lambda=\nu,\, \nu \overline{q}$,\, imply that the domains $D_q(A,B)$ and $D_{q^2}(A^2,B)$ are cores for $B$. 
\end{proof}
\begin{tht}\label{charclassc1sminusq1} Suppose that $\frac{\pi}{2}<|\theta_0|< \pi$ and  there exist  numbers $\nu\in \rho(B)$ and $p\in \C\backslash \cS(-q)$ such that\, $\nu \overline{q},\, \overline{\nu}\, \overline{ q},\, \nu\, \overline{q}^2\in \rho(B)$,
\begin{align}\label{bresolventid3}
\overline{q}\, R_{\lambda \overline{q}}(B)A&\subseteq  A R_{\lambda}(B)\quad {\rm for}\quad \lambda=\nu \overline{q}\, ,~ \nu,~ \overline{\nu} ,\\
q R_{ \overline{p}\, q }(A)B&\subseteq  B R_{\overline{p}}(A),\label{aresolventid11}\\
q R_{\mu p q }(A)B&\subseteq  B R_{\mu p}(A)\quad {\rm for ~all}\quad  \mu \in \R , \mu \neq 0. \label{aresolventid2}
\end{align}  
Then $\{A,B\}$ is an orthogonal sum of  trivial representations  and a pair $\{A_1,B_1\}$ of the class $\cC_{1}$.
\end{tht}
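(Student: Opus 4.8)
The plan is to reduce the statement to Theorem~\ref{charclassc1sminusq}, proceeding exactly as Theorem~\ref{charpositive1} was reduced to Theorem~\ref{charpositive} and Theorem~\ref{chargenerala1} to Theorem~\ref{chargenerala}. The first step is to split off the kernels of $A$ and of $B$ as reducing subspaces. Note that $\cS(-q)^+$ contains the positive real half-axis and $\cS(-q)=\cS(-q)^+\cup(-\cS(-q)^+)\supseteq\R$, so the hypothesis $p\in\C\setminus\cS(-q)$ forces $p$, hence also $\overline{p}$, to be non-real. Applying the operator inclusion \eqref{aresolventid2} with $\mu=1$ and the inclusion \eqref{aresolventid11} to a vector $g\in{\rm ker}\,B\subseteq\cD(B)$ gives $BR_p(A)g=qR_{pq}(A)Bg=0$ and $BR_{\overline{p}}(A)g=qR_{\overline{p}q}(A)Bg=0$; thus ${\rm ker}\,B$ is invariant under $R_p(A)$ and under $R_{\overline{p}}(A)=R_p(A)^{*}$, hence reduces $R_p(A)$ and therefore $A$, and it trivially reduces $B$. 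In the same way, inserting $g\in{\rm ker}\,A\subseteq\cD(A)$ into \eqref{bresolventid3} for $\lambda=\nu$ and $\lambda=\overline{\nu}$ shows that ${\rm ker}\,A$ is invariant under $R_\nu(B)$ and $R_{\overline{\nu}}(B)=R_\nu(B)^{*}$, hence reduces $B$ (this is the opening argument in the proof of Theorem~\ref{charpositive1}). Since the projections onto ${\rm ker}\,A$ and ${\rm ker}\,B$ commute, $\{A,B\}$ decomposes orthogonally into three trivial summands, supported on ${\rm ker}\,A\cap{\rm ker}\,B$, on ${\rm ker}\,A\cap({\rm ker}\,B)^{\perp}$, and on $({\rm ker}\,A)^{\perp}\cap{\rm ker}\,B$, and an essential summand $\{\tilde A,\tilde B\}$ on $({\rm ker}\,A)^{\perp}\cap({\rm ker}\,B)^{\perp}$ with ${\rm ker}\,\tilde A={\rm ker}\,\tilde B=\{0\}$.

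The second step is to check that $\{\tilde A,\tilde B\}$ satisfies the hypotheses of Theorem~\ref{charclassc1sminusq}. Rewriting \eqref{bresolventid3} with $\lambda=\nu$ as the weak $B$-resolvent identity $R_{\nu\overline{q}}(B)A\subseteq qAR_\nu(B)$, that is \eqref{resolventw1-b} with $\mu=\nu\overline{q}$, Proposition~\ref{rldb}(vi) shows that $\cD_q(A,B)$ is a core for $B$; chaining \eqref{bresolventid3} for $\lambda=\nu\overline{q}$ and $\lambda=\nu$, i.e.\ $R_{\nu\overline{q}^{2}}(B)A\subseteq qAR_{\nu\overline{q}}(B)$ followed by $R_{\nu\overline{q}}(B)A\subseteq qAR_\nu(B)$, Corollary~\ref{da2core} shows that $\cD_{q^{2}}(A^{2},B)$ is a core for $B$ (this is where the assumptions $\nu,\nu\overline{q},\nu\overline{q}^{2}\in\rho(B)$ enter). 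Since $A$, $B$ and $A^{2}$ are block diagonal for the above decomposition and the three extra summands are trivial, the subspaces $\cD_q(A,B)$ and $\cD_{q^{2}}(A^{2},B)$ split accordingly, which forces $\cD_q(\tilde A,\tilde B)$ and $\cD_{q^{2}}(\tilde A^{2},\tilde B)$ to be cores for $\tilde B$. Finally, restricting \eqref{aresolventid2} to the reducing subspace $({\rm ker}\,A)^{\perp}\cap({\rm ker}\,B)^{\perp}$ yields $qR_{\mu pq}(\tilde A)\tilde B\subseteq\tilde BR_{\mu p}(\tilde A)$ for all real $\mu\neq0$, with the same number $p\in\C\setminus\cS(-q)$.

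Now all hypotheses of Theorem~\ref{charclassc1sminusq} hold for $\{\tilde A,\tilde B\}$ — in particular $\frac{\pi}{2}<|\theta_0|<\pi$ implies $0<|\theta_0|<\pi$ — so $\{\tilde A,\tilde B\}$ belongs to the class $\cC_1$; by Proposition~\ref{classpropwell} it has the form \eqref{unitarycc2}, which provides the pair $\{A_1,B_1\}$, and together with the three trivial summands this is the asserted orthogonal decomposition. The step I expect to demand the most care is the first one: one must ensure that the invariance of ${\rm ker}\,B$ comes hand in hand with invariance under the adjoint resolvent — this is precisely the reason the extra hypothesis \eqref{aresolventid11}, singling out the conjugate direction $\overline{p}$, is imposed in addition to \eqref{aresolventid2} — and that the two core properties genuinely descend from $\{A,B\}$ to the essential summand; the rest is a routine adaptation of the proofs of Theorems~\ref{charpositive1} and \ref{chargenerala1}.
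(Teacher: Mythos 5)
Your proposal is correct and follows essentially the same route as the paper: use \eqref{bresolventid3} (for $\lambda=\nu,\overline{\nu}$) and \eqref{aresolventid11}--\eqref{aresolventid2} (for $p$ and $\overline{p}$) to show that ${\rm ker}\,A$ and ${\rm ker}\,B$ reduce the pair, use Proposition~\ref{rldb}(vi) and Corollary~\ref{da2core} to get the two core properties from \eqref{bresolventid3} with $\lambda=\nu,\nu\overline{q}$, and then invoke Theorem~\ref{charclassc1sminusq} on the essential summand. You merely spell out details (the three trivial summands, the descent of the core properties) that the paper leaves implicit.
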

\begin{proof}
As in the proof of Theorem \ref{chargenerala1} it follows from  (\ref{bresolventid3})   that\, 
${\rm ker}~A$\, is reducing for the pair   $\{A,B\}$ and that\, $D_q(A,B)$ and $D_{q^2}(A^2,B)$ are cores for $B$. Likewise the relations\, 
$q R_{ \overline{p}\, q }(A)B\subseteq  B R_{\overline{p}}(A)$ and $q R_{ p q }(A)B\subseteq  B R_p(A)$ 
 (by (\ref{aresolventid11}) and (\ref{aresolventid2})) imply that\, ${\rm ker}~B$\, is  reducing for the pair   $\{A,B\}$. Using these facts the assertion is derived from  Theorem \ref{charclassc1sminusq}.\end{proof}
{\bf Remarks.}  By Theorem \ref{weakclasscandc1},  the weak $B$-resolvent relation\, 
$\overline{q} R_{\mu \overline{q}}(B)A\subseteq AR_\mu(B)$ 
is satisfied for a pair $\{A,B\}$ of the class $\cC_0$ if\, $\mu\in\C\backslash \cS({\overline{q}})$\, and for  a pair $\{A,B\}$ in $\cC_1$ if\, $\mu\in\C\backslash \cS(-{\overline{q}})$.  From this result it follows that for the corresponding pairs in Theorems\, \ref{charpositive1}--\ref{charclassc1sminusq1}\, the assumptions (\ref{bresolventid1}), (\ref{bresolventid2}), and (\ref{bresolventid3}) can be fulfilled. That is, if  $\{A,B\}$ is a pair of the class $\cC_0$ we can choose  $\nu \in \C\backslash \cS({\overline{q}})$ such that $\overline{\nu} \in \C\backslash \cS({\overline{q}})$\, (then (\ref{bresolventid1}) holds) and if in addition $0<|\theta_0| <\frac{\pi}{2}$\, there exists $\nu \in \C\backslash \cS({\overline{q}})$\, such that $\nu \overline{q},\, \overline{\nu}\in \C\backslash \cS({\overline{q}})$ (which implies  (\ref{bresolventid2})). If the pair $\{A,B\}$ is in $\cC_1$ and $\frac{\pi}{2}<|\theta_0|< \pi$ we can find $\nu \in \C\backslash \cS(-{\overline{q}})$  such that $ \nu \overline{q}\, 
, \overline{\nu}\in \C\backslash \cS(-{\overline{q}})$ (these conditions imply (\ref{bresolventid3})) and there exists $ p\in \C\backslash \cS(-q)$ such that\, $ \overline{p}\, \in   \C\backslash\cS(-q)$ (then  (\ref{aresolventid1}) and (\ref{aresolventid2}) are satisfied by Theorem \ref{weakclasscandc1}(ii)).

\section{Deficiency subspaces and their dimensions}\label{defectspaces}

Let $A$ and $B$ be positive self-adjoint  operators with trivial kernels acting on a Hilbert space $\Hh$. Then, by Theorems~\ref{weakclasscandc1} and \ref{charpositive} and by Proposition~\ref{rldb},
the pair $\{A,B\}$ belongs to the class $\cC_0$ of representations of equation \eqref{qplane1} if and only if the resolvent relations \eqref{resolvents1} and \eqref{resolvents1-b} are satisfied  for all $\lambda,\mu\in \C$ such that   $\lambda,\mu\notin\cS(q)^+$. Moreover,  up to unitary equivalence, the only  irreducible such pair  is  $\{e^{\alpha Q}, e^{\beta P}\}$, where $|\alpha\beta|<\pi$ and $q=e^{-\ii\alpha\beta}$; see  Section~\ref{operatorpreliminaries}. In this section we study 
 the resolvent relations for  the pair $\{e^{\alpha Q},e^{\beta P}\}$  in the general case, that is, we do  not assume that $|\alpha\beta|<\pi$ or $\lambda,\mu\notin\cS(q)^+$.
 
First let us fix some assumptions and notations that will be kept throughout this section. Suppose that $\alpha,\beta \in \R$ and $q:=e^{-\ii\alpha\beta}\neq \pm 1$.  
We consider the pair $$\{A:=e^{\alpha Q}, B:=e^{\beta P}\}$$ of  self-adjoint operators on the Hilbert space $\Hh:=L^2(\R)$. Recall that $A$ and $B$ act by
\begin{align}\label{abrel}
(Af)(x) =e^{\alpha x}f(x)\quad {\rm and}\quad (Bf)(x) = f(x+\ii \beta)
\end{align}
for all functions $f$ of the dense domain
 $$\cD_0={\rm Lin}~\{e^{-\varepsilon x^2+\gamma x}; \varepsilon >0, \gamma\in \C\}.$$
Since  \eqref{qplane1} is satisfied for $f\in\cD_0$ (by (\ref{abrel})), we know from Section \ref{prelimrel1} that
 \eqref{resolvents} holds for all $h\in(B-\mu I)(A-\lambda I)\cD_0$ and that \eqref{resolvents-b} holds for all 
$h\in (A-\lambda q I)(B-\mu q I)\cD_0$.  Therefore, in order to find the  resolvent equations in the present case it suffices to describe the form of resolvent  equations on the orthogonal complements of  spaces $(B-\mu I)(A-\lambda I)\cD_0$ and  $(A-\lambda q I)(B-\mu q I)\cD_0$, respectively. This will be achieved by the formulas at the end of this section.
\begin{thd}
\begin{align*}
\Hh_A(\lambda,\mu)&=\{\psi \in \Hh: \psi\perp (B-\mu I)(A-\lambda I)\cD_0\},
\\
\Hh_B(\lambda,\mu)&=\{\eta\in \Hh:\eta \perp (A-\lambda q I)(B-\mu q I)\cD_0\}.
\end{align*}
\end{thd}
Assume that $\lambda$, $\mu \in \mathbb C \setminus (\mathbb R_+ \cup \bar q\mathbb R_+)$, where $\R_+=[0,+\infty)$,  and write 
$$
\lambda=e^{r+\ii s},\ \mu= e^{u+\ii v},\quad r>0,\ u>0, \ |s|<\pi,\ |v|<\pi.
$$
Further, we let 
$$
\alpha\beta =\theta_0+2\pi m,\quad m \in \mathbb Z,\ \theta_0 \in (-\pi, \pi),
$$
 so that $q=e^{-\ii \theta_0}$, and set 
$$
\epsilon_1=\sign(v),\ \epsilon_2 = \sign(v-\theta_0),\ \epsilon_3 = \sign(s),\ \epsilon_4 = \sign(s-\theta_0).
$$

\begin{tht}\label{maindef}
${\rm (i)}$ The vector space $\Hh_A(\lambda,\mu)$ is spanned by the functions 
\[
\psi_j(x)= 
\frac{
\bar\mu^{-\ii x/\beta}
e^{\pi(1+\epsilon_1) x/\beta}
}{
e^{2\pi x/\beta}
-
\bar\lambda^{2\pi/\alpha\beta}
e^{-4\ii\pi^2 j/\alpha\beta}
}, 
\]
where $j\in \mathbb Z$ and\, $0<(s - 2\pi j) /\alpha\beta<1$, and its dimension is 
\[
 \dim \Hh_A(\lambda,\mu) = \sign(\alpha\beta)\Bigl  (m + \frac{\epsilon_3-\epsilon_4}{2}\Bigr) .  
\]

${\rm (ii)}$  The space $\Hh_B(\lambda,\mu)$ is the linear span of functions
\[
 \eta_k(x) = 
\frac{
(\bar\mu \bar q)^{-\ii  x/\beta}
e^{2\pi kx/\beta}
}{e^{\alpha x} -\bar \lambda \bar q}.
\]
where $k\in \mathbb Z$ and\, $0<(\theta_0 + 2\pi k-v)/\alpha\beta <1$, and it has the dimension
\[
 \dim \Hh_B(\lambda,\mu) = \sign(\alpha\beta) \Bigl(m + \frac{\epsilon_1-\epsilon_2}{2}\Bigr).
\]
\end{tht}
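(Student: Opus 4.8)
The computation of $\Hh_A(\lambda,\mu)$ and $\Hh_B(\lambda,\mu)$ amounts to two things: determining which functions are orthogonal to the relevant subspaces, and then counting an independent set among them. I would treat the two parts symmetrically, since $\Hh_B(\lambda,\mu)$ is obtained from $\Hh_A(\lambda,\mu)$ essentially by interchanging the roles of $A$ and $B$ (i.e., applying the Fourier transform, which conjugates $e^{\alpha Q}$ and $e^{\beta P}$), so it suffices to carry out (i) carefully and then transcribe. For (i): a vector $\psi$ lies in $\Hh_A(\lambda,\mu)$ iff $\langle \psi,(B-\mu I)(A-\lambda I)g\rangle=0$ for all $g\in\cD_0$. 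The plan is to move the operators to the other side: $\langle \psi,(B-\mu I)(A-\lambda I)g\rangle = \langle (A-\bar\lambda I)(B^{*}-\bar\mu I)\psi, g\rangle$ once one checks that $\psi$ is in the appropriate domains (this needs an argument because $\psi_j$ is not in $\cD(A)\cap\cD(B)$ in general, so I would instead interpret the orthogonality directly and use that $\cD_0$ is a core). Since $\cD_0$ is dense, $\psi\in\Hh_A(\lambda,\mu)$ forces $(B^{*}-\bar\mu I)\psi$ (interpreted in the distributional/analytic sense of Lemma~\ref{ebatepcondition}) to be annihilated by $(A-\bar\lambda I)$ after pairing, i.e. $\psi$ must satisfy the two scalar ``eigen-equations'' $e^{\alpha x}\psi(x)=\bar\lambda\,\psi(x)$ relaxed to the resolvent-kernel level together with $\psi(x+\ii\beta)=\bar\mu\,\psi(x)$ (again in the sense of \eqref{actionb}). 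Solving the first relation modulo the denominator produces the factor $(e^{2\pi x/\beta}-\bar\lambda^{2\pi/\alpha\beta}e^{-4\ii\pi^{2}j/\alpha\beta})^{-1}$ (the $2\pi/\alpha\beta$ and $e^{-4\ii\pi^{2}j/\alpha\beta}$ coming from the multivaluedness of $\bar\lambda^{2\pi/\alpha\beta}$), and solving the quasi-periodicity relation $\psi(x+\ii\beta)=\bar\mu\psi(x)$ fixes the numerator to be $\bar\mu^{-\ii x/\beta}$ times an entire $2\pi i/\beta$-... well, $2\pi/\beta$-periodic (in $x$) correction, which is the exponential $e^{\pi(1+\epsilon_1)x/\beta}$ — the precise exponent is pinned down by the requirement $\psi_j\in L^2(\R)$, which forces the decay/growth to be balanced at $\pm\infty$; this is where $\epsilon_1=\sign v$ enters.

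Having produced the candidate functions $\psi_j$, the next step is the membership check: each $\psi_j$ must genuinely lie in $L^2(\R)$ and be orthogonal to $(B-\mu I)(A-\lambda I)\cD_0$. The $L^2$-condition is a routine estimate at $x\to\pm\infty$ using that $\lambda,\mu\notin\R_+\cup\bar q\R_+$ (this keeps the denominator bounded away from zero on $\R$, so the only issue is exponential growth); working this out pins the admissible range of $j$ to the single inequality $0<(s-2\pi j)/\alpha\beta<1$. The orthogonality I would verify by an explicit residue/contour computation: pair $\psi_j$ against $(B-\mu I)(A-\lambda I)e^{-\epsilon x^{2}+\gamma x}$, shift the contour of integration (using the Gaussian decay to justify the shift and the vanishing of arc contributions), and see that the integrand is, up to a pole-free factor, holomorphic in the relevant strip, so the integral is zero. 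Linear independence of the $\psi_j$ for distinct admissible $j$ is clear because the denominators have disjoint pole sets. Finally, counting the integers $j$ with $0<(s-2\pi j)/\alpha\beta<1$ is an elementary lattice-point count: the length of the interval in $j$ is $|\alpha\beta|/2\pi$, i.e. $|\theta_0+2\pi m|/2\pi$, and the exact number of integers it contains depends on where the endpoints $s$ and $s-\theta_0$ fall relative to the $2\pi\Z$ grid, which is precisely the data encoded by $\epsilon_3=\sign s$ and $\epsilon_4=\sign(s-\theta_0)$; carefully doing the floor-function arithmetic yields $\sign(\alpha\beta)(m+\tfrac{\epsilon_3-\epsilon_4}{2})$. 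Part (ii) is then the same argument with $\alpha\leftrightarrow\beta$ (Fourier transform), $q$ replaced appropriately, and $\lambda,\mu$ entering through the shifted arguments $\bar\lambda\bar q$, $\bar\mu\bar q$; the sign data becomes $\epsilon_1,\epsilon_2$ instead of $\epsilon_3,\epsilon_4$.

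\textbf{Main obstacle.} The delicate point is making the formal manipulation ``move $B-\mu I$ and $A-\lambda I$ across the inner product'' rigorous when $\psi_j$ is not a priori in the domains of these (unbounded, non-commuting) operators — indeed $\psi_j$ need not lie in $\cD(e^{\beta P})$ at all, and $e^{\alpha x}\psi_j(x)$ need not be in $L^2$. The clean way around this is \emph{not} to manipulate operators but to compute the pairing $\langle\psi,(B-\mu I)(A-\lambda I)g\rangle$ head-on for $g\in\cD_0$ (where everything is a genuine rapidly-decaying function and $Bg$ is the honest analytic translate by \eqref{actionb}), expressing it as an absolutely convergent integral, and then deforming the contour; the characterization of $\Hh_A(\lambda,\mu)$ emerges from demanding this vanish for all $g\in\cD_0$, which by density of $\cD_0$ (and its invariance under $A$, $B$, and the Fourier transform, noted in Section~\ref{twoclasses}) is equivalent to a concrete functional equation for $\psi$. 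The second somewhat fussy point is the endpoint bookkeeping in the dimension count: one must be careful about the strict versus non-strict inequalities (the excerpt's hypothesis $\lambda,\mu\notin\R_+\cup\bar q\R_+$ is exactly what guarantees the endpoints are never hit, so no $\psi_j$ is lost or spuriously gained), and about the sign of $\alpha\beta$ flipping the orientation of the counting interval, which is why the overall answer carries the prefactor $\sign(\alpha\beta)$.
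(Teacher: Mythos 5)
Your overall strategy is the same as the paper's: pair $\psi$ against $(B-\mu I)(A-\lambda I)g$ for Gaussian test functions $g\in\cD_0$, extract a functional equation, use the asymptotics at $x\to\pm\infty$ to decide which candidate solutions lie in $L^2(\R)$, and finish with a lattice-point count (your endpoint bookkeeping and the role of $\epsilon_1,\dots,\epsilon_4$ and of $\lambda,\mu\notin\R_+\cup\bar q\R_+$ are exactly right, as is the use of the Fourier transform to transfer one part to the other — the paper does $\Hh_B$ directly and obtains $\Hh_A$ by conjugating with $\cF$, which is the more convenient order since the multiplication operator then acts last). However, there is a genuine gap at the crucial step. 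You assert that the quasi-periodicity relation ``fixes the numerator to be $\bar\mu^{-\ii x/\beta}$ times an entire periodic correction, \emph{which is the exponential} $e^{\pi(1+\epsilon_1)x/\beta}$,'' the exponent being pinned down by the $L^2$ requirement. But a priori the periodic correction is an \emph{arbitrary} periodic function, i.e.\ a possibly infinite Fourier series $\sum_k c_k e^{2\pi k x/\beta}$, and nothing in your argument rules out infinitely many terms. This is precisely where the paper invests its main effort (Lemma~\ref{trigpol}): one shows that the periodic factor extends to an entire function of exponential type via explicit Gaussian growth estimates on the regularized transform $g_\epsilon(t)=\int e^{\ii tx}\overline{\eta(x)}(e^{\alpha x}-\lambda q)e^{-\epsilon x^2}dx$, and then invokes the classical theorem that an entire periodic function of exponential type is a trigonometric polynomial. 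Without this finiteness, you cannot conclude that $\Hh_A(\lambda,\mu)$ is spanned by the individual $\psi_j$ (an infinite combination of functions each failing to be in $L^2$ could conceivably lie in $L^2$ through cancellation, and the term-by-term asymptotic selection you describe is only valid for finite sums), nor that the space is finite-dimensional at all.

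A secondary inaccuracy: you describe the orthogonality condition as forcing $\psi$ to satisfy ``two scalar eigen-equations'' — a resolvent-type relation for $e^{\alpha x}$ \emph{together with} $\psi(x+\ii\beta)=\bar\mu\psi(x)$. The condition is on the composite only, not on each factor separately; if $\psi$ literally satisfied both relations in the domain sense it would be annihilated by the injective operator $(A-\bar\lambda I)(B-\bar\mu I)$ and hence be zero. The paper avoids this by never splitting the composite: it bundles the factor $(e^{\alpha x}-\lambda q)$ into the function whose Gaussian-regularized Fourier transform is taken, so that the entire orthogonality condition becomes the single difference equation \eqref{gepsilon} in the transform variable, whose general solution is a particular solution times a periodic function. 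Your contour-deformation plan could be made to produce the same equation, but the heuristic of two separate eigen-equations, taken at face value, would lead you astray.
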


\begin{proof}
(i): First we study the space $\Hh_B(\lambda,\mu)$. Let $\eta\in\Hh_B(\lambda,\mu)$. Applying (\ref{abrel}) to $\phi(x)=e^{-\epsilon x^2 +itx} \in \cD_0$ we compute
\begin{align*}
 ((A-\lambda q)(B-\mu q)\phi )(x)& = (e^{\alpha x} - \lambda q) (e^{-\epsilon (x + i\beta)^2 + it(x+i\beta)} -\mu q e^{-\epsilon x^2 + itx})
\\
&=(e^{\alpha x} - \lambda q) (e^{-2i\beta\epsilon  x + \epsilon \beta^2 - t\beta} -\mu q)
 e^{-\epsilon x^2 + itx}.
\end{align*}
Therefore, since  $\eta \perp (A-\lambda q)(B-\mu q)\phi(x) $,  we obtain
\begin{align*}
 0&=\int_{\mathbb R} \overline{\eta(x)} \,(e^{\alpha x} - \lambda q) (e^{-2i\beta\epsilon  x + \epsilon \beta^2 - t\beta} - \mu q) e^{-\epsilon x^2 + itx} \, dx 
\\
& = e^{\epsilon \beta^2 - t\beta} \int_{\mathbb R}e^{i(t-2\beta\epsilon)x}\, \overline{\eta(x)} \,(e^{\alpha x} - \lambda q) \,  e^{-\epsilon x^2} \, dx 
\\
&\qquad - \mu q \int_{\mathbb R}e^{itx}\, \overline{\eta(x)} \,(e^{\alpha x} - \lambda q) \, e^{-\epsilon x^2} \, dx
\end{align*}
which implies that
\begin{equation}\label{gepsilon}
 e^{\epsilon \beta^2 - t\beta} g_\epsilon (t-2\beta\epsilon) = \mu q\, g_\epsilon(t), \qquad g_\epsilon(t) = \int_{\mathbb R}e^{itx}\, \overline{\eta(x)} \,(e^{\alpha x} - \lambda q) \, e^{-\epsilon x^2} \, dx.
\end{equation}
Since $\mu=e^{u+iv}$, $|v|<\pi$, it is  easily seen that the function 
\[
G_\epsilon(t) = e^{-\frac{t^2}{4\epsilon} -\frac{u+i(v-\theta_0)}{2\epsilon\beta}t }
\] 
satisfies \eqref{gepsilon}. Therefore, any solution of \eqref{gepsilon} has the form\, $G_\epsilon(t)H_\epsilon(t)$, where $H_\epsilon$ is a  periodic function on $\R$  with period $-2\epsilon\beta$, that is, $H_\epsilon(t-2\epsilon\beta) = H_\epsilon (t)$ for $t\in \R$.  

The crucial step of this proof is contained in the following lemma.
\begin{thl}\label{trigpol}
 $H_\epsilon$ is a trigonometric polynomial, that is, there are an integer $l\ge0$ and numbers $c_k\in\mathbb C$, $k=-l,\dots, l$ such that  $H_\epsilon(t) =\sum_{k=-l}^l d_k e^{\frac{\ii \pi k t}{\epsilon\beta}}$. 
\end{thl}

\begin{proof}
 We have
\begin{equation}\label{function_H}
 H_\epsilon(t) = (G_\epsilon(t))^{-1}g_\epsilon(t) = 
 e^{\frac{t^2}{4\epsilon} +\frac{u+i(v-\theta_0)}{2\epsilon\beta}t } 
 \int_{\mathbb R}e^{itx}\, \overline{\eta(x)} \,(e^{\alpha x} - \lambda q) \, e^{-\epsilon x^2} \, dx.
\end{equation}
Because the above arguments are valid  for  complex numbers $t$ as well,   $H_\epsilon$ becomes  a periodic function on a whole complex plane $\mathbb C$.

Since $\eta \in L_2(\mathbb R)$,  the function $e^{\tau |x|}\overline{\eta(x)} \,(e^{\alpha x} - \lambda q) \, e^{-\epsilon x^2}$ is also  in $L_2(\mathbb R)$ for any $\tau>0$. Therefore, the integral in \eqref{function_H} is an entire function on the complex plane $\mathbb C$.

We show that $H_\epsilon$ is of exponential type. For any $s\in \mathbb R$ we have
\begin{align*}
 |H_\epsilon(t+\ii s)| &=  
| e^{\frac{(t+\ii s)^2}{4\epsilon} +\frac{u+\ii(v-\theta_0)}{2\epsilon\beta}(t+\ii s) }| 
\Bigl | \int_{\mathbb R}e^{\ii (t+ \ii s) x}\, \overline{\eta(x)} \,(e^{\alpha x} - \lambda q) \, e^{-\epsilon x^2} \, dx\Bigr |
\\
& =  e^{\frac{t^2 - s^2}{4\epsilon} +\frac{ut-(v-\theta_0)s}{2\epsilon\beta}  } 
\Bigl | \int_{\mathbb R}e^{\ii t x}\, \overline{\eta(x)} \,(e^{\alpha x} - \lambda q) \, e^{-\epsilon x^2 -sx} \, dx\Bigr |
\\
&\le  e^{\frac{t^2 - s^2}{4\epsilon} +\frac{ut-(v-\theta_0)s}{2\epsilon\beta}  } 
 \|\eta\| \Bigl (\int_{\mathbb R} |e^{\alpha x} - \lambda q|^2 \, e^{-2\epsilon x^2 -2sx} \, dx\Bigr )^{1/2}.
\end{align*}

Since 
\[
 \int_{\mathbb R} |e^{\alpha x} - \lambda q|^2 \, e^{-2\epsilon x^2 -2sx} \, dx = \sqrt{\frac{\pi}{2\epsilon}} e^{\frac{s^2}{2\epsilon}} \bigl(|\lambda|^2 + e^{\frac{\alpha(\alpha - 2s)}{2\epsilon}} -(\lambda q +\bar\lambda \bar q)e^{\frac{\alpha(\alpha - 4s)}{8\epsilon}}\bigr),
\]
for\, $0\le t \le 2\epsilon\beta$\, we have
\[
 |H_\epsilon(t+\ii s)| \le  
% e^{\frac{t^2}{4\epsilon} +\frac{ut-(v-\alpha\beta)s}{2\epsilon\beta}  } 
 Me^{-\frac{(v-\theta_0)s}{2\epsilon\beta}  } 
 \|\eta\| \Bigl ( \sqrt{\frac{\pi}{2\epsilon}} \bigl(|\lambda|^2 + e^{\frac{\alpha(\alpha - 2s)}{2\epsilon}} -(\lambda q +\bar\lambda \bar q)e^{\frac{\alpha(\alpha - 4s)}{8\epsilon}}\bigr)\Bigr)^{1/2},
\]
where $M$ is the supremum of $e^{\frac{t^2}{4\epsilon} +\frac{ut}{2\epsilon\beta}}$ on the interval $[0, 2\epsilon\beta]$. Since $H_\epsilon(t+\ii s)$ is periodic in $t$, the latter estimate holds for all $t\in \mathbb R$. Hence $H_\epsilon(t+\ii s)$ has exponential growth. Thus, $H_\epsilon$ is an entire periodic function of exponential type. By a result from complex analysis (see, e.g., \cite[p.~334]{mar}), such a function $H_\epsilon$ has to be  a trigonometric polynomial
\[
 H_\epsilon(t) = \sum_{k=-l}^l c_k e^{\frac{\ii \pi kt}{\beta\epsilon}}. \qedhere 
\]
\end{proof}

By Lemma \ref{trigpol} the equality $g_\epsilon(t) = G_\epsilon(t)H_\epsilon(t)$ takes the form
\[
 \int_{\mathbb R}e^{itx}\, \overline{\eta(x)} \,(e^{\alpha x} - \lambda q) \, e^{-\epsilon x^2} \, dx = e^{-\frac{t^2}{4\epsilon} -\frac{u+i(v-\theta_0)}{2\epsilon\beta}t } \sum_{k=-l}^l c_k e^{\frac{\ii \pi kt}{\beta\epsilon}}. 
\]
Applying the Fourier transform, we  obtain
\begin{align*}
 \overline{\eta(x)} \,(e^{\alpha x} - \lambda q) \, e^{-\epsilon x^2} & = \frac{1}{2\pi} \int_{\mathbb R} e^{-\ii tx}  g_\epsilon(t)\, dt
\\
&= e^{-\epsilon x^2}e^{\ii x u/\beta} e^{-(v-\theta_0)x/\beta} \sum_{k=-l}^l d_k e^{2\pi k x/\beta},
\end{align*}
where
\[
 d_k = \frac{\sqrt{\epsilon}}{\sqrt{\pi}} c_k e^{(u+\ii(v-\theta_0-2k\pi))^2/(4\epsilon\beta^2)}.
\]
Obviously, 
the factor $e^{-\epsilon x^2 }$  cancels, so we get
\begin{equation}\label{function_eta}
 \overline{\eta(x)}  = \frac{e^{\ii x u/\beta} e^{-(v-\theta_0)x/\beta}}
  {e^{\alpha x} - \lambda q}  \sum_{k=-l}^l d_k e^{\frac{2\pi k x}{ \beta} } 
= \frac{(\mu q)^{\ii x /\beta} }
  {e^{\alpha x} - \lambda q}  \sum_{k=-l}^l d_k e^{{2\pi k x}/{ \beta} }.
\end{equation}
Let us introduce the functions 
\[
 \eta_k(x) := \frac{(\bar\mu \bar q)^{-\ii x /\beta} e^{{2\pi k x}/{ \beta} } }
  {e^{\alpha x} - \bar\lambda \bar q}  
= \frac{e^{-\ii u  x /\beta }
  e^{(\theta_0- v +2\pi k ) x /\beta}
}
  {e^{\alpha x} - \bar\lambda \bar q}\, .
\]
Then we have $\eta(x) = \sum_{k=-l}^l \bar d_k\eta_k(x)$.

Next we want to decide which functions $\eta_k$  belong to $L_2(\mathbb R)$. Let us begin with the case where $\alpha>0$. Then the function $\eta_k(x)$
behaves like $e^{(\theta_0- v +2\pi k )x/\beta}$ as $x\to-\infty$ and like $e^{(\theta_0- v +2\pi k -\alpha\beta)x/\beta}$ as $x\to +\infty$, so that $\eta_k\in L^2(\R)$ if and only  $0<(\theta_0- v +2\pi k )/\beta<\alpha$. In the case where $\alpha<0$ a similar reasoning shows  that 
$\eta_k \in L_2(\mathbb R)$
if and only $\alpha<(\theta_0- v +2\pi k )/\beta<0$. Thus, in both cases we have $\eta_k\in L^2(\R)$ if and only if  if and only if\, $0<(\theta_0- v +2\pi k )/\alpha\beta<1$, that is,
\begin{align*}
v-\theta_0<2\pi k <v+2\pi m &\quad{\rm for}\quad \alpha\beta>0, 
\\
v+2\pi m<2\pi k < v-\theta_0& \quad{\rm for}\quad \alpha\beta<0,
\end{align*}
or equivalently,
\[
 \frac{1+\epsilon_2}{2}\le k \le m - \frac{1-\epsilon_1}{2} \quad \mbox{for $\alpha\beta>0$},
\quad m+\frac{1+\epsilon_1}{2} \le k \le \frac{\epsilon_2-1}{2} \quad \mbox{for $\alpha\beta<0$}.
\]
The functions $\eta_k$ are obviously linearly independent. From the asymptotic behaviour of $\eta_k$ it follows that\, $\eta(x) = \sum_{k=-l}^l \bar d_k\eta_k(x)$\, is  in $L^2(\R)$ if and only each $\eta_k$ with nonvanishing coefficient $\bar d_k$ is in $L^2(\R)$. Therefore, we obtain
\[
 \dim \Hh_B = \sign(\alpha\beta) \Bigl(m + \frac{\epsilon_1-\epsilon_2}{2}\Bigr).
\]

(ii): Now we turn  to the space $\Hh_A(\lambda,\mu)$. Let $\psi\in L^2(\R)$. Recall that $\psi\in \Hh_A(\lambda,\mu)$ if and only if
\[
\psi\perp (B-\mu  I)(A-\lambda  I)\cD_0= (e^{\beta P} -\mu I)(e^{\alpha Q} - \lambda I)\cD_0 
\]
For the Fourier transform $\mathcal F$ we have $e^{\alpha Q} = \mathcal F e^{-\alpha P} \mathcal F^{-1}$ and $e^{\beta P} = \mathcal F e^{\beta Q} \mathcal F^{-1}$. Hence the latter is equivalent to
\[
 \psi\perp 
\mathcal F (e^{\beta Q} -\mu I)(e^{-\alpha P} - \lambda I) \mathcal F^* \cD_0.
\]
Since $\cD_0$ is invariant under the Fourier transform, $\psi\in \Hh_A(\lambda,\mu)$ if and only if  the inverse Fourier transform\, $\cF^*\psi$\, of $\psi$ satisfies 
\[
 \mathcal F^* \psi \perp (e^{\beta Q} -\mu I)(e^{-\alpha P} - \lambda I)\mathcal D_0.
\]
From the proof of (i) we already know that $\psi\in \Hh_A(\lambda,\mu)$ if and only if $\cF^*\psi$ is a linear combination of functions
\begin{equation*}
 \phi_k(x) = 
\frac{(\bar\lambda)^{\ii x/\alpha} e^{-2\pi k x/\alpha}}{e^{\beta x} -\bar\mu}\, , 
\end{equation*}
where $k\in \mathbb Z$ and $0<(s - 2\pi k)/\alpha\beta<1$.
The condition on $k$ can be rewritten as
\begin{align*}
 \frac{1+\epsilon_4}{2}-m \le k \le  \frac{\epsilon_3-1}{2}& \quad {\rm for}\quad \alpha\beta>0,
\\
 \frac{1+\epsilon_3}{2} \le k \le \frac{\epsilon_4-1}{2} -m& \quad {\rm for}\quad \alpha\beta<0.
\end{align*}
Therefore, we have
\[
 \dim \Hh_A = \sign(\alpha\beta)\Bigl  (m + \frac{\epsilon_3-\epsilon_4}{2}\Bigr) .  
\]

To calculate the Fourier transform of $\phi_k$, we shall apply the following formula
(see, e.g., \cite[3.311.9]{gradshtein})
\begin{equation}\label{integral1}
 \int_{\mathbb R} \frac{e^{-\delta x}}{e^{-x}+\gamma}\, dx = \frac{\pi \gamma^{\delta -1}}{\sin \pi\delta}, \qquad |\arg \gamma|<\pi, \ 0< \mathop{\mathrm {Re}} \delta <1.
\end{equation}
After some computations using \eqref{integral1} we obtain\,
$\mathcal F \phi_k (x)= C_k \psi_k(x),$ where
\begin{align}
C_k
&= \frac{
\ii
\sqrt{2\pi}
e^{\ii(u-\ii v-\ii\pi(1-\epsilon_1))( r -\ii s  +\ii 2\pi k)/\alpha\beta}
}{
\bar\mu
\beta
}\, ,
\notag
\\
\psi_k(x)
&= \frac{
\bar\mu^{-\ii  t/\beta}
e^{\pi(1+\epsilon_1)  t /\beta}
}{
(
e^{2\pi t /\beta}
-
\bar\lambda^{2\pi /\alpha\beta}
e^{4\ii\pi^2 k /\alpha\beta}
)
}\, .
\qedhere
\end{align}
\end{proof}
The following corollary restates the result on the dimensions of Theorem \ref{maindef} in an important special case.
\begin{thc}
 Suppose that $\{A,B\}$ is an irreducible nontrivial representation of the class $\cC_0$ for relation (\ref{qplane1}). Then
 \begin{align*}
 \dim \Hh_A(\lambda,\mu) &= 0\quad {\rm for}\quad  \lambda  \notin \cS(q)^+, \quad \dim \Hh_A(\lambda,\mu) = 1\quad {\rm for} \quad  \lambda  \in \cS(q)^+, \\
 \dim \Hh_B(\lambda,\mu) &= 0\quad {\rm for} \quad  \mu  \notin \cS(q)^+,\quad \dim \Hh_B(\lambda,\mu) = 1\quad{\rm for}\quad  \mu  \in \cS(q)^+.
\end{align*}
\end{thc}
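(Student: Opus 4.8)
The plan is to specialize Theorem~\ref{maindef} to the concrete model and then translate its combinatorial conditions into the geometric statement about the sector $\cS(q)^+$. By the remarks opening this section together with Corollary~\ref{wellbehcor1}, the only irreducible nontrivial pair of the class $\cC_0$ consisting of positive operators is $\{A=e^{\alpha Q},\,B=e^{\beta P}\}$ with $q=e^{-\ii\alpha\beta}$ and $|\alpha\beta|<\pi$; so the standing hypothesis of the corollary forces $|\alpha\beta|<\pi$, which in the notation of Theorem~\ref{maindef} means $m=0$ and $\theta_0=\alpha\beta$. Putting $m=0$ in Theorem~\ref{maindef} gives
\[
\dim\Hh_A(\lambda,\mu)=\sign(\alpha\beta)\,\frac{\epsilon_3-\epsilon_4}{2},\qquad
\dim\Hh_B(\lambda,\mu)=\sign(\alpha\beta)\,\frac{\epsilon_1-\epsilon_2}{2},
\]
with $\epsilon_1=\sign(v)$, $\epsilon_2=\sign(v-\theta_0)$, $\epsilon_3=\sign(s)$, $\epsilon_4=\sign(s-\theta_0)$, where $\lambda=e^{r+\ii s}$ and $\mu=e^{u+\ii v}$ with $r,u>0$, $|s|,|v|<\pi$, and (as throughout this section) $\lambda,\mu\notin\R_+\cup\overline q\,\R_+$.

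Now recall that $\cS(q)^+$ is the closed sector between the positive real axis and the half-line $\R_+\overline q=\R_+e^{\ii\theta_0}$. Hence, for $\lambda$ lying off the two bounding rays, i.e. $s\neq0$ and $s\neq\theta_0$, one has $\lambda\in\cS(q)^+$ exactly when $s$ lies strictly between $0$ and $\theta_0$, which is precisely the condition $\epsilon_3\neq\epsilon_4$. If $\theta_0>0$ this reads $0<s<\theta_0$, so $\epsilon_3=1$, $\epsilon_4=-1$ and $\sign(\alpha\beta)\frac{\epsilon_3-\epsilon_4}{2}=1$; if $\theta_0<0$ it reads $\theta_0<s<0$, so $\epsilon_3=-1$, $\epsilon_4=1$ and again $\sign(\alpha\beta)\frac{\epsilon_3-\epsilon_4}{2}=(-1)(-1)=1$. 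In either case $\dim\Hh_A(\lambda,\mu)=1$. Conversely, if $\lambda\notin\cS(q)^+$ then $s$ lies on the same side of $0$ and of $\theta_0$, so $\epsilon_3=\epsilon_4$ and $\dim\Hh_A(\lambda,\mu)=0$. The assertions about $\Hh_B(\lambda,\mu)$ follow in exactly the same way, replacing $\lambda,s,\epsilon_3,\epsilon_4$ by $\mu,v,\epsilon_1,\epsilon_2$.

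Thus the corollary is pure bookkeeping of the four sign variables once Theorem~\ref{maindef} is in hand; I expect the only delicate point to be verifying that the products $\sign(\alpha\beta)\frac{\epsilon_3-\epsilon_4}{2}$ and $\sign(\alpha\beta)\frac{\epsilon_1-\epsilon_2}{2}$ come out $+1$ and not $-1$ on the sector, which the case analysis above confirms. One should also note that the two bounding rays $\R_+$ and $\overline q\,\R_+$ of $\cS(q)^+$ are genuinely exceptional: on them $\lambda q$ (resp. $\mu q$) lands in $\sigma(A)=[0,\infty)$ (resp. $\sigma(B)$), so these values lie outside the scope of Theorem~\ref{maindef} and are excluded by the standing assumption of the section; on the open sector no such difficulty arises.
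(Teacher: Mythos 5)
Your proof is correct and follows the same route as the paper: the paper's proof simply reduces to the irreducible model via Corollary~\ref{wellbehcor1} and declares that the assertion "follows at once from Theorem~\ref{maindef}", whereas you carry out explicitly the $m=0$ specialization and the sign bookkeeping that this reduction entails. Your added remarks --- that positivity (the standing hypothesis of the section) pins down the representative $\{e^{\alpha Q},e^{\beta P}\}$, and that the two bounding rays of $\cS(q)^+$ fall outside the scope of Theorem~\ref{maindef} --- are accurate and only make explicit what the paper leaves implicit.
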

\begin{proof} 
By Corollary \ref{wellbehcor1},  $\{A,B\}$ is unitarily equivalent to a pair $\{\delta_1 e^{\alpha P},\delta_2 e^{\beta Q}\}$, where\, $\delta_1, \delta_2\in \{1,-1\}$\, and\, $\alpha \beta=\theta_0$, $|\theta_0|<\pi$. Hence the assertion follows at once from Theorem \ref{maindef}.
\end{proof}
We now  continue the  considerations towards the modified resolvent relations.
\begin{thp}\label{maphahb}
$(i)$\,  $R_{\bar\mu}(B) R_{\bar\lambda}(A) - \frac{1}{\bar\lambda\bar\mu(1-\bar q)}\, I$\, maps\, $\Hh_B(\lambda,\mu)$\, into\, $\Hh_A(\lambda,\mu)$.\\
$(ii)$\, $ R_{\bar\lambda\bar q}(A)R_{\bar\mu \bar q}(B) + \frac{1}{\bar q\bar\lambda\bar\mu(1-\bar q)}\, I$\, maps\, $\Hh_A(\lambda,\mu)$\, into\, $\Hh_B(\lambda,\mu)$.
\end{thp}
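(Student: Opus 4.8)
The plan is to verify the two inclusions directly from the orthogonality characterizations of $\Hh_A(\lambda,\mu)$ and $\Hh_B(\lambda,\mu)$, and the crux is a single algebraic identity on the invariant domain $\cD_0$. First I would expand the two operator products and use $AB\phi=qBA\phi$ for $\phi\in\cD_0$ to obtain
\[
(B-\mu I)(A-\lambda qI)\phi \ \mbox{-type terms cancel, yielding}\quad (B-\mu I)(A-\lambda I)\phi-\bar q\,(A-\lambda qI)(B-\mu qI)\phi=\lambda\mu(1-q)\phi .
\]
Because $\lambda,\mu\notin\R_+\cup\bar q\R_+$ one has $\lambda,\lambda q,\bar\lambda,\bar\lambda\bar q\in\rho(A)$ and $\mu,\mu q,\bar\mu,\bar\mu\bar q\in\rho(B)$, so every resolvent below is a bounded operator; moreover $\lambda,\mu\neq0$ and $q\neq1$, so the scalars in the statement make sense. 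Dividing the identity by $\lambda\mu(1-q)$ writes every $\phi\in\cD_0$ as a sum of a vector of $(B-\mu I)(A-\lambda I)\cD_0$ and a vector of $(A-\lambda qI)(B-\mu qI)\cD_0$; this is the bridge between the two deficiency spaces.

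For $(i)$ I would fix $\eta\in\Hh_B(\lambda,\mu)$ and $\phi\in\cD_0$ and pair $(R_{\bar\mu}(B)R_{\bar\lambda}(A)-\frac{1}{\bar\lambda\bar\mu(1-\bar q)}I)\eta$ with $(B-\mu I)(A-\lambda I)\phi$. Using $R_{\bar\mu}(B)^{*}=R_\mu(B)$, $R_{\bar\lambda}(A)^{*}=R_\lambda(A)$ and the invariance of $\cD_0$ under $A$ and $B$, the resolvents telescope, $R_\lambda(A)R_\mu(B)(B-\mu I)(A-\lambda I)\phi=\phi$, so the first term contributes $\langle\eta,\phi\rangle$. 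Then I would substitute for $\phi$ the combination from the first paragraph; the part lying in $(A-\lambda qI)(B-\mu qI)\cD_0$ is annihilated by $\eta$, leaving exactly $\overline{\frac{1}{\bar\lambda\bar\mu(1-\bar q)}}\,\langle\eta,(B-\mu I)(A-\lambda I)\phi\rangle$, which cancels the constant term; hence the pairing vanishes and $(R_{\bar\mu}(B)R_{\bar\lambda}(A)-\frac{1}{\bar\lambda\bar\mu(1-\bar q)}I)\eta\in\Hh_A(\lambda,\mu)$. Part $(ii)$ is the mirror image: for $\psi\in\Hh_A(\lambda,\mu)$ I would pair $(R_{\bar\lambda\bar q}(A)R_{\bar\mu\bar q}(B)+\frac{1}{\bar q\bar\lambda\bar\mu(1-\bar q)}I)\psi$ with $(A-\lambda qI)(B-\mu qI)\phi$, telescope using $R_{\bar\lambda\bar q}(A)^{*}=R_{\lambda q}(A)$ and $R_{\bar\mu\bar q}(B)^{*}=R_{\mu q}(B)$, substitute the identity, and drop the term in $(B-\mu I)(A-\lambda I)\cD_0$ because $\psi$ kills it; the surviving term cancels the constant term, the scalar $\frac{1}{\bar q\bar\lambda\bar\mu(1-\bar q)}$ being chosen precisely to make this happen.

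The main obstacle is not analytic — there is no estimate or limiting argument — but organizational: one must get the algebraic identity of the first paragraph exactly right, since it is what ties $\Hh_A(\lambda,\mu)$ and $\Hh_B(\lambda,\mu)$ together, and then keep careful track of which resolvent is the adjoint of which and of the conjugation of scalars in the inner product, as that conjugation is exactly what pins down the constants $\frac{1}{\bar\lambda\bar\mu(1-\bar q)}$ and $\frac{1}{\bar q\bar\lambda\bar\mu(1-\bar q)}$ in the statement. I would also note in passing that membership in $\Hh_A(\lambda,\mu)$ and $\Hh_B(\lambda,\mu)$ is just the orthogonality verified above, so no density of $\cD_0$ or of $(B-\mu I)(A-\lambda I)\cD_0$ is required.
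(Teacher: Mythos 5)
Your proposal is correct and follows essentially the same route as the paper: the identity $(B-\mu I)(A-\lambda I)\phi=\bar q\,(A-\lambda qI)(B-\mu qI)\phi+\lambda\mu(1-q)\phi$ on $\cD_0$, the telescoping $R_\lambda(A)R_\mu(B)(B-\mu I)(A-\lambda I)\phi=\phi$ via adjoints of the resolvents, and orthogonality to the complementary subspace are exactly the ingredients of the paper's argument. The only caveat is to fix the inner-product convention (the paper takes $\langle\cdot,\cdot\rangle$ linear in the first slot, so the surviving coefficient is $\tfrac{1}{\bar\lambda\bar\mu(1-\bar q)}$ rather than its conjugate), which you already flag as the bookkeeping point.
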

\begin{proof}
(i):  Indeed, for $\phi \in \cD_0$ we have
\[
 (B-\mu I)(A-\lambda I)\phi= \bar q(A-\lambda q I)(B-\mu q I)\phi + \lambda\mu(1-q) \phi.
\]
Therefore, for $\eta\in \Hh_B(\lambda,\mu)$ we derive
\begin{gather*}
\big\langle\big(R_{\bar\mu}(B) R_{\bar\lambda}(A) - \frac{1}{\bar\lambda\bar\mu(1-\bar q)}\, I \big)\eta, (B-\mu I)(A-\lambda I)\phi\big\rangle
\\
=
\langle R_{\bar\mu}(B) R_{\bar\lambda}(A)\eta, (B-\mu I)(A-\lambda I)\phi\rangle 
-
\frac{1}{\bar\lambda\bar\mu(1-\bar q)}\, \langle \eta, (B-\mu I)(A-\lambda I)\phi \rangle
\\
=
- 
\frac{q}{\bar\lambda\bar\mu(1-\bar q)}\, \langle \eta, (A-\lambda q I)(B-\mu q I)\phi\rangle =0,
\end{gather*}
that is, $\big(R_{\bar\mu}(B) R_{\bar\lambda}(A) - \frac{1}{\bar\lambda\bar\mu(1-\bar q)} \,I\big)\eta \in \Hh_A(\lambda,\mu)$.

(ii) is proved in a  similar manner.
\end{proof}

From Proposition \ref{maphahb} it follows that the operator
\begin{equation}\label{resolvents-p1}
\Bigl( R_{\bar\mu}(B) R_{\bar\lambda}(A) - \frac{1}{\bar\lambda\bar\mu(1-\bar q)}\, I \Bigr)
\Bigl( R_{\bar\lambda\bar q}(A)R_{\bar\mu \bar q}(B) + \frac{1}{\bar q\bar\lambda\bar\mu(1-\bar q)}\, I \Bigr)
\end{equation}
maps the subspace\, $\Hh_A(\lambda,\mu)$\, into itself and that the operator
\begin{equation}\label{resolvents-p2}
\Bigl( R_{\bar\lambda\bar q}(A)R_{\bar\mu \bar q}(B) + \frac{1}{\bar q\bar\lambda\bar\mu(1-\bar q)}\, I\Bigr)
\Bigl( R_{\bar\mu}(B) R_{\bar\lambda}(A) - \frac{1}{\bar\lambda\bar\mu(1-\bar q)}\, I \Bigr)
\end{equation}
maps\, $\Hh_B(\lambda,\mu)$\, into itself. 
In the special case when  $\Hh_A(\lambda,\mu)=\Hh_B(\lambda,\mu)=\{0\}$ we know from  Section~\ref{prelimrel1} that the corresponding operators in \eqref{resolvents-p1} and \eqref{resolvents-p2} are both equal to $-q\bar\lambda^{-2}\bar\mu^{-2}(1-\bar q)^{-2}\,I$ on the whole Hilbert space.

In the general case some lengthy but straightforward  computations using   \eqref{integral1} and the relations $e^{\alpha Q} = \mathcal F e^{-\alpha P} \mathcal F^{-1}$, $e^{\beta P} = \mathcal F e^{\beta Q} \mathcal F^{-1}$, lead to the formulas
\begin{align*}
 \Bigl(& R_{\bar\lambda\bar q}(A)R_{\bar\mu\bar q}(B)+\frac{1}{\bar\lambda\bar\mu\bar q(1-\bar q)}\, I\Bigr)\psi_j(x)\qquad
\\
&=\frac{
\bar\lambda^{2\pi(m -(1-\epsilon_1)/2) /\alpha\beta}
e^{-4\ii\pi^2(m-  (1-\epsilon_1)/2 )j  /\alpha\beta}
}{
\bar\lambda
\bar q(1-\bar q)
\bar\mu
}
\sum_{l=(1+\epsilon_2)/2}^{m-(1-\epsilon_1)/2}
\bar\lambda^{-2\pi l /\alpha\beta}
e^{4\ii\pi^2 l j  /\alpha\beta}
\eta_{l}(x),
\\
\Bigl(&R_{\bar\mu}(B)R_{\bar\lambda}(A) -\frac{1}{\bar\lambda\bar\mu(1-\bar q)}\, I\Bigr)\eta_k(x)
\\
&=\frac{
-2\pi
\bar\lambda^{ -2\pi (m-k+(\epsilon_1-1)/2)/\alpha\beta}
}{
\alpha\beta
\bar\lambda\bar\mu
(1-\bar q)
}
\sum_{j=(1-\epsilon_3)/2}^{m-(\epsilon_4+1)/2}
e^{4\ii\pi^2 j(m-k+(\epsilon_1-1)/2)/\alpha\beta}
\psi_j(x).
\end{align*}
Therefore, we finally obtain
\begin{align*}
\Bigl(R_{\bar\mu}(B)&R_{\bar\lambda}(A) -\frac{1}{\bar\lambda\bar\mu(1-\bar q)}\, I\Bigr)
\Bigl(R_{\bar\lambda\bar q}(A)R_{\bar\mu\bar q}(B)+\frac{1}{\bar\lambda\bar\mu\bar q(1-\bar q)}\, I \Bigr) \psi_j(x)
\\
&=\frac{
-2\pi
}{
\alpha\beta
\bar\lambda^2
\bar\mu^2
\bar q(1-\bar q)^2
}
\sum_{k=0}^{m-1-(\epsilon_2-\epsilon_1)/2}
\sum_{l=(1-\epsilon_3)/2}^{m-(\epsilon_4+1)/2}
e^{-4\ii\pi^2 k j  /\alpha\beta}
e^{4\ii\pi^2 lk/\alpha\beta}
\psi_l(x),
\\
\Bigl(R_{\bar\lambda\bar q}(A)&R_{\bar\mu\bar q}(B)+\frac{1}{\bar\lambda\bar\mu\bar q(1-\bar q)}\, I \Bigr) \Bigl(R_{\bar\mu}(B)R_{\bar\lambda}(A) -\frac{1}{\bar\lambda\bar\mu(1-\bar q)}\,I\Bigr)\eta_k
\\
&=\frac{
-2\pi
}{
\alpha\beta
\bar\lambda^2\bar\mu^2
\bar q
(1-\bar q)^2
}
\sum_{j=(1-\epsilon_3)/2}^{m-(\epsilon_4+1)/2}
\sum_{l=(1+\epsilon_2)/2}^{m-(1-\epsilon_1)/2}
e^{-4\ii\pi^2 j(k-l)/\alpha\beta}
\bar\lambda^{2\pi(k- l) /\alpha\beta}
\eta_{l}(x).
\end{align*}
The preceding two equations are the versions of the resolvent equations for  basis elements of the subspaces $\Hh_A(\lambda,\mu)$\, and\, $\Hh_B(\lambda,\mu)$, respectively.
\section*{Acknowledgments}
The results of this paper were obtained during research visits of the first author at Leipzig University supported by the DFG grant SCHM 1009/5-1. Excellent working conditions and warm hospitality are  acknowledged. The authors express their gratitude to Prof. Lyudmila Turowska and Prof. Andrew Bakan for helpful discussions of  topics related to this research.

\end{document}